\def \rr {\mathbb{R}}
\def \rn {\mathbb{R}^n}
\def \sn {\mathbb{S}^n}
\def \sd {\mathbb{S}^d}
\def \crit {2^\star}
\def \ue {u_\eps}
\def \xe {x_\eps}
\def \eps {\varepsilon}
\DeclareMathOperator{\can}{can}
\DeclareMathOperator{\Eucl}{Eucl}
\newtheorem{thm}{Theorem}[section]
\newtheorem{prop}{Proposition}[section]
\newtheorem{coro}{Corollary}[section]
\newtheorem{claim}{Claim}[section]
\date{September 8th, 2014.}
\title[Peaks and degenerate functions]{Sign-changing solutions to elliptic second order equations: glueing a peak to a degenerate critical manifold}
\author{Fr\'ed\'eric Robert}
\address{Fr\'ed\'eric Robert, Institut \'Elie Cartan, UMR 7502, Universit\'e de Lorraine, BP 70239, F-54506 Vand{\oe}uvre-l\`es-Nancy, France; Pacific Institute for the Mathematical Sciences, UMI CNRS 3069, 4176-2207 Main Mall, Vancouver, BC, V6T 1Z4, Canada}
\email{frederic.robert@univ-lorraine.fr}
\author{J\'er\^ome V\'etois}
\address{J\'er\^ome V\'etois, Universit\'e de Nice Sophia Antipolis, Laboratoire J.-A. Dieudonn\'e, CNRS UMR 7351, Parc Valrose, F-06108 Nice Cedex 2, France}
\email{vetois@unice.fr}
\thanks{The research of the first author is partially supported by INSMI (CNRS)}
\begin{document}

\begin{abstract}
We construct blowing-up sign-changing solutions to some nonlinear critical equations by glueing a standard bubble to a degenerate function. We develop a new method based on analyticity to perform the glueing when the critical manifold of solutions is degenerate and no Bianchi--Egnell type condition holds.
\end{abstract}

\maketitle

\section{Introduction and statement of the results}\label{sec:intro}

Let $(M,g)$ be a smooth compact Riemannian manifold of dimension $n\geq 3$, and let $h\in C^{0,\theta}(M)$ ($\theta\in (0,1)$) be such that $\Delta_g+h$ is coercive where $\Delta_g=-\hbox{div}_g(\nabla)$ is the Laplace-Beltrami operator. In \cite{robertvetois}, we addressed the question of the existence of a family $(\ue)_{\eps>0}\in C^{2,\theta}(M)$ of blowing-up solutions of type $(u_0-B)$ to 
\begin{equation}\label{eq:ue}
\Delta_g \ue+h\ue=|\ue|^{\crit-2-\eps}\ue\hbox{ in }M,
\end{equation}
where $\crit:=\frac{2n}{n-2}$. Concerning terminology, we say that $(\ue)_{\eps}$ is of type $(u_0-B)$ when there exists a function $u_0\in C^{2,\theta}(M)$ positive that is a solution to 
\begin{equation}\label{eq:u0}
\Delta_g u_0+hu_0=u_0^{\crit-1}\hbox{ in }M
\end{equation} 
and such that
\begin{equation*}
\ue=u_0-B_\eps+o(1),
\end{equation*}
where $(B_\eps)_\eps$ is a bubble as defined in \eqref{def:bubble} below and $\lim_{\eps\to 0}o(1)=0$ in $H_1^2(M)$, the completion of $C^\infty(M)$ for the norm $u\mapsto \Vert u\Vert_{H_1^2}:=\Vert u\Vert_2+\Vert\nabla u\Vert_2$. Solutions of type $(u_0-B)$ are sign-changing. When $h\equiv c_n R_g$, where $c_n:=\frac{n-2}{4(n-1)}$ and $R_g$ is the scalar curvature, equation \eqref{eq:u0} is the Yamabe equation, and $\Delta_g+h$ is coercive if and only if $(M,g)$ has positive Yamabe invariant. There is an extensive literature on the existence of positive blowing-up solutions to equations of type \eqref{eq:ue}: see for instance Rey \cite{Rey} for a historical reference, Brendle--Marques \cite{BreMar} for the Yamabe equation, Druet--Hebey \cite{druethebeyTAMS} and Esposito--Pistoia--V\'etois \cite{EspPisVet} for perturbations of the Yamabe equation, Chen--Wei--Yan \cite{chenweiyan} and Hebey--Wei \cite{hw} for equations on the sphere, and the references therein. Sign-changing blowing-up solutions to \eqref{eq:ue} on the canonical sphere have been constructed by del Pino--Musso--Pacard--Pistoia \cites{delPMusPacPis1,delPMusPacPis2} and Pistoia--V\'etois \cite{PisVet}. We refer to Robert--V\'etois \cite{robertvetois} for a  discussion and references on the compactness of solutions to \eqref{eq:ue}.

\medskip\noindent In \cite{robertvetois}, we gave sufficient conditions to get blowing-up solutions of type $(u_0-B)$ to \eqref{eq:ue} provided that $u_0$ is a nondegenerate solution to \eqref{eq:u0}, that is $K_0=\{0\}$ where
\begin{equation}\label{def:K0}
K_0:=\{\varphi\in C^{2,\theta}(M)/\, \Delta_g \varphi+h\varphi=(\crit-1)u_0^{\crit-2}\varphi\hbox{ in }M\}.
\end{equation}
When $u_0$ is degenerate, the situation can be different. In \cite{robertvetois}, we showed that there is no blowing-up solutions of type $(u_0-B)$ to the constant scalar curvature equation on the canonical sphere: in this case, $u_0$ is necessarily degenerate. 

\medskip\noindent The present article is devoted to the analysis of the degenerate case, that is when $K_0\neq\{0\}$. We say that $u_0\in C^{2,\theta}(M)\setminus\{0\}$ is a strict local minimizer of $I_0$ if there exists $\nu>0$ such that
$$I_0(u)>I_0(u_0)\hbox{ for all }u\in B_\nu(u_0)\setminus \rr u_0\,,$$
where
$$I_0(u):=\frac{\int_M\left(|\nabla u|_g^2+hu^2\right)\, dv_g}{\left(\int_M|u|^{\crit}\, dv_g\right)^{\frac{2}{\crit}}}$$
for all $u\in H_1^2(M)\setminus \{0\}$. Our main result is the following:

\begin{thm}\label{th:1} 
We let $(M,g)$ be a compact Riemannian manifold of dimension $n\geq 3$ with positive Yamabe invariant and we fix $h\equiv \frac{n-2}{4(n-1)} R_g$. We assume that there exists $u_0\in C^{2,\theta}(M)$ that is a positive solution to \eqref{eq:u0} and a strict local minimizer of $I_0$. We assume either that $\{3\leq n\leq 9\}$ or that $\{(M,g)$ is locally conformally flat$\}$. Then there exists a solution of type $(u_0-B)$ to \eqref{eq:ue}.
\end{thm}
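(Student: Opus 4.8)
The plan is to perform a Lyapunov--Schmidt finite-dimensional reduction, taking $u_0+\kappa+\Psi(\kappa)-B_{\delta,\xi}$ as approximate solution, where $(\delta,\xi)$ parametrize the glued bubble and $\kappa$ ranges over the degenerate space $K_0$; the concentration parameter $\delta$ is then handled by a classical energy expansion, while the genuinely new point is to treat the $K_0$-directions by an analyticity argument. First I would fix the conformally natural bubbles $B_{\delta,\xi}$ from \eqref{def:bubble} and recall the two basic linear facts: the operator $\varphi\mapsto\Delta_g\varphi+h\varphi-(\crit-1)B_{\delta,\xi}^{\crit-2}\varphi$ is an isomorphism off its $(n+1)$-dimensional approximate kernel $\mathrm{span}\{\partial_\delta B_{\delta,\xi},\partial_{\xi_1}B_{\delta,\xi},\dots,\partial_{\xi_n}B_{\delta,\xi}\}$ (the classical nondegeneracy of the Euclidean bubble), and $\varphi\mapsto\Delta_g\varphi+h\varphi-(\crit-1)u_0^{\crit-2}\varphi$ is an isomorphism off $K_0$ by the very definition \eqref{def:K0}. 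Since $B_{\delta,\xi}$ concentrates as $\delta\to0$, these combine to make the operator linearized at $u_0+\kappa+\Psi(\kappa)-B_{\delta,\xi}$ invertible on the orthogonal complement of $\mathcal K_{\delta,\xi}:=K_0\oplus\mathrm{span}\{\partial_\delta B_{\delta,\xi},\partial_{\xi_i}B_{\delta,\xi}\}$, uniformly for $\eps,\delta$ small and $\kappa$ in a fixed small ball of $K_0$; a contraction mapping then produces a unique small $\phi=\phi_{\eps,\delta,\xi,\kappa}\in\mathcal K_{\delta,\xi}^\perp$ solving the infinite-dimensional part of \eqref{eq:ue}, with explicit estimates in $\eps$ and $\delta$.

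Next I would set up the degenerate directions. Since $h\equiv\frac{n-2}{4(n-1)}R_g$ and $u_0>0$, the map $v\mapsto\Pi_{K_0^\perp}\bigl(\Delta_g v+hv-v^{\crit-1}\bigr)$ is real-analytic in a $C^0$-neighborhood of $u_0$ (this is exactly where the choice of $h$ is used: $t\mapsto t^{\crit-1}$ is real-analytic on $(0,\infty)$), so the analytic implicit function theorem yields a real-analytic $\Psi\colon B_r(0)\subset K_0\to K_0^\perp$ with $\Psi(0)=0$, $D\Psi(0)=0$, and $\Pi_{K_0^\perp}\bigl(\Delta_g(u_0+\kappa+\Psi(\kappa))+h(\cdots)-(\cdots)^{\crit-1}\bigr)=0$. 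The reduced function $J_0(\kappa):=I_0\bigl(u_0+\kappa+\Psi(\kappa)\bigr)$ is then real-analytic on $B_r(0)$, its critical points correspond to genuine solutions of \eqref{eq:u0} near $u_0$, and because $u_0$ is a strict local minimizer of $I_0$ and $u_0\notin K_0$ (which follows from \eqref{eq:u0}), the slice $\{u_0+\kappa+\Psi(\kappa)\}$ meets $\rr u_0$ only at $\kappa=0$, so $\kappa=0$ is a strict local minimizer of $J_0$. The \L{}ojasiewicz inequality for the analytic function $J_0$, together with strict minimality (which collapses the level set $\{J_0=J_0(0)\}$ near $0$ to $\{0\}$), then yields a quantitative gap $J_0(\kappa)-J_0(0)\ge c\,|\kappa|^{N}$ for some integer $N$ and all small $\kappa$.

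The reduced problem is now finite-dimensional: substituting $\phi_{\eps,\delta,\xi,\kappa}$ back, \eqref{eq:ue} reduces to finding a critical point of the reduced energy $J_\eps(\delta,\xi,\kappa)$, that is, to solving the projections onto $\mathcal K_{\delta,\xi}$. The $\partial_{\xi_i}$-equations are handled by a compactness/degree argument on the closed manifold $M$; the $\partial_\delta$-equation is solved via the asymptotic expansion of $J_\eps$ in $\delta$, where the hypothesis $\{3\le n\le9\}$ or $\{(M,g)$ locally conformally flat$\}$ ensures that the interaction between $u_0$ and the bubble dominates the purely geometric correction (the mass of $\Delta_g+h$, resp.\ the Weyl tensor term) in the expansion, so that the equation has a solution $\delta_\eps\to0$ governed by the positivity of $u_0$, exactly as in the nondegenerate analysis of \cite{robertvetois}. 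The new step is the $\partial_\kappa$-equations: one shows that the contributions of the bubble and of the $\eps$-perturbation to $J_\eps$ and to $\nabla_\kappa J_\eps$ are $o(1)$ as $(\eps,\delta)\to0$, uniformly for $\xi\in M$ and $|\kappa|\le r$, so that $J_\eps(\delta,\xi,\cdot)\to J_0$ in $C^1(\overline{B_\rho(0)})$; comparing the values at $\kappa=0$ and on $\partial B_\rho(0)$ and using the \L{}ojasiewicz gap, the minimum of $\kappa\mapsto J_\eps(\delta,\xi,\kappa)$ over $\overline{B_\rho(0)}$ is attained in the interior, hence solves the $\partial_\kappa$-equations. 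Running the three reductions together --- solving the $\xi$- and $\kappa$-equations first as continuous functions of $\delta$, then the scalar $\delta$-equation by the intermediate value theorem (equivalently, a single min--max for $J_\eps$) --- produces a critical point $(\delta_\eps,\xi_\eps,\kappa_\eps)$, and $\ue:=u_0+\kappa_\eps+\Psi(\kappa_\eps)-B_{\delta_\eps,\xi_\eps}+\phi_{\eps,\delta_\eps,\xi_\eps,\kappa_\eps}$ is a solution of \eqref{eq:ue} of type $(u_0-B)$.

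The main obstacle is the treatment of $K_0$: the linearized operator is by hypothesis not invertible there and no Bianchi--Egnell-type spectral condition is assumed, so the standard reduction breaks down and one must exploit the variational structure instead. This makes two points delicate: first, proving the real-analyticity of $J_0$ in a functional-analytic setting compatible with the elliptic theory on $M$ --- which is precisely why $h$ is taken to be the conformal Laplacian potential rather than a general $C^{0,\theta}$ potential; and second, ensuring that the \L{}ojasiewicz-based resolution of the $\kappa$-equations is uniform enough in $(\delta,\xi)$, and decouples cleanly from, the classical resolution of the concentration equation in $\delta$ carried out as in \cite{robertvetois}.
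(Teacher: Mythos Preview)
Your overall architecture matches the paper's: a Lyapunov--Schmidt reduction modeled on $u(\kappa)+W_{-1,\delta,\xi}$ with $u(\kappa)=u_0+\kappa+\Psi(\kappa)$, the analyticity of $\Psi$, and then a minimization of the reduced energy over $(\kappa,\delta,\xi)$. But there is a genuine gap in the treatment of the $K_0$-directions.

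You define the reduced function as $J_0(\kappa):=I_0\bigl(u_0+\kappa+\Psi(\kappa)\bigr)$ and argue, correctly, that $\kappa=0$ is a strict local minimum of this because the slice meets $\rr u_0$ only at $\kappa=0$. However, the reduced Lyapunov--Schmidt energy is \emph{not} built from $I_0$: it is $J_\eps(u(\kappa)+W+\phi)$ with $J_\eps(u)=\tfrac12\Vert u\Vert_h^2-\tfrac{1}{\crit-\eps}\int_M|u|^{\crit-\eps}$, and as $(\eps,\delta)\to 0$ this converges (up to the bubble constant $K_n^{-n}/n$) to
\[
\kappa\ \longmapsto\ J_0\bigl(u(\kappa)\bigr)=\tfrac12\Vert u(\kappa)\Vert_h^2-\tfrac{1}{\crit}\Vert u(\kappa)\Vert_{\crit}^{\crit},
\]
the \emph{unconstrained} functional restricted to the slice, not to $I_0$ restricted to the slice. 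These are different objects (one is scale-invariant, the other is not), and strict minimality of $I_0$ on the slice does not formally imply strict minimality of $J_0$ on the slice. Your \L{}ojasiewicz estimate $I_0(u(\kappa))-I_0(u_0)\ge c|\kappa|^N$ is therefore applied to the wrong function and does not control the minimum of $\kappa\mapsto J_\eps(\delta,\xi,\kappa)$.

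Bridging this gap is exactly the paper's Theorem~\ref{th:minim}: one must show that $u_0$ is a strict local minimizer of $I_0$ \emph{if and only if} $0$ is a strict local minimizer of $\kappa\mapsto J_0(u(\kappa))$. The proof is not soft; it hinges on an explicit comparison of the analytic Taylor expansions of $\Vert u(\kappa)\Vert_h^2-\Vert u(\kappa)\Vert_{\crit}^{\crit}$ and of $\Vert u(\kappa)\Vert_{\crit}^{\crit}$ (Claim~\ref{claim:1}), showing that the same homogeneous polynomials $A_L(\kappa)$ govern both, so that the sign information transfers. Once that equivalence is in hand, no \L{}ojasiewicz inequality is actually needed: plain strict minimality of $J_0(u(\cdot))$ at $0$, together with the compactness of $\overline{B_{\nu_1}(0)}\times[a,b]\times M$ and the expansion of the reduced energy, forces the minimizer into the interior.

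Two smaller points. First, the analyticity of $\Psi$ has nothing to do with the choice $h\equiv c_nR_g$; it only uses that $t\mapsto t^{\crit-1}$ is real-analytic on $(0,\infty)$ and $u_0>0$ (this is Proposition~\ref{prop:analycity}). The hypothesis $h\equiv c_nR_g$ is used elsewhere, in the expansion of $J_\eps(W_{\kappa,\delta,\xi})$, to suppress the $(h-c_nR_g)\delta^2$ term so that the $u_0$--bubble interaction $\sim\delta^{(n-2)/2}$ dominates when $7\le n\le 9$ (and similarly the Weyl term is what forces $n\le 9$ in the non-lcf case). Second, the claim that ``critical points of $I_0$ on the slice correspond to genuine solutions of \eqref{eq:u0}'' is not quite right: critical points of $I_0$ are solutions only up to scaling, and on the slice the gradient of $I_0$ mixes $\Pi_{K_0}R(u(\kappa))$ with a term proportional to $(1-\Vert u(\kappa)\Vert_h^2/\Vert u(\kappa)\Vert_{\crit}^{\crit})$, so the correspondence you assert is not immediate.
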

\noindent It follows from the compactness results of Schoen \cite{Sch3} and Khuri-Marques--Schoen \cite{KhuMarSch} (see also Druet \cite{druet:imrn}) that blowing-up solutions to \eqref{eq:ue} must change sign under the assumptions of Theorem \ref{th:1}.

\smallskip\noindent As a remark, any nondegenerate local minimizer of $I_0$ is a strict local minimizer, so we recover the main theorem of \cite{robertvetois}. Moreover no solution of the Yamabe equation on the sphere is a strict local minimizer. However, as soon as one takes the product of a sphere with another manifold, one gets examples of degenerate strict local minimizers. We refer to Section \ref{sec:ex} for such examples, in particular to Corollary \ref{th:type:4:yam}. %scalar curvature

\medskip\noindent We prove Theorem \ref{th:1} by performing a finite-dimensional reduction modeled on $(u-B)$ where $B$ is a bubble and $u\in \mathcal{M}$, and where $\mathcal M$ is a suitable finite-dimensional analytic manifold containing $u_0$. We construct the manifold $\mathcal M$ such that its elements are as close as possible to solutions of \eqref{eq:u0}: this is done using a first finite-dimensional reduction. The manifold $\mathcal M$ is locally parametrized by $K_0$, and the tangent space of $\mathcal M$ at $u_0$ is $K_0$. The general construction in Robert--V\'etois \cite{robertvetoisPROC} reduces the proof of Theorem \ref{th:1} to finding stable critical points to a functional that is the sum of two terms: the first is an explicit local well involving essentially the bubble, the second is the restriction to $\mathcal M$ of a nontrivial global functional $J_0$.

\medskip\noindent Solutions to \eqref{eq:u0} around $u_0$ are all in $\mathcal M$. However, in general, the elements of $\mathcal M$ are not all solutions to \eqref{eq:u0}, that is $\mathcal M$ is not a critical manifold of the problem. Following the terminology of Chapter 2 of the monograph Ambrosetti--Malchiodi \cite{AM}, a critical manifold around $u_0$ is a finite-dimensional manifold ${\mathcal Z}\ni u_0$ of solutions to \eqref{eq:u0}. A critical manifold ${\mathcal Z}$ is nondegenerate if its tangent space at any $u\in{\mathcal Z}$ is exactly $\hbox{Ker}(I_0^{\prime\prime}(u))$, the kernel of the Hessian of $I_0$ at $u$. The existence of a nondegenerate critical manifold around $u_0$ is equivalent to the existence of $\tilde{u}\in C^1(B_1(0)\subset K_0,H_1^2(M))$ such that 
\begin{equation*}%\label{be:cond}
\left\{\begin{array}{l}
\tilde{u}(z)\hbox{ is a solution to \eqref{eq:u0} for all }z\in B_1(0)\subset K_0,\\
\tilde{u}(0)=u_0,\\
K_0=\hbox{Span}\{\partial_{z_i}\tilde{u}(0)/\, i=1,...,d\},\hbox{ where }d:=\hbox{dim}(K_0).\end{array}\right\}\eqno{(BE)}
\end{equation*}
Condition $(BE)$ (for Bianchi--Egnell type condition) is a standard and natural assumption in the finite-dimensional reduction. It is satisfied when $M=\mathbb{R}^n$ and $h\equiv 0$ (see the classical references Rey \cite{Rey} and Bianchi--Egnell \cite{BE}), also for some sign-changing solutions (see the recent example of Musso--Wei \cite{MussoWei}). We refer to Ambrosetti--Malchiodi \cite{AM} for an abstract general setting for the use of nondegenerate critical manifolds.
%The existence of a nondegenerate critical manifold is equivalent to the existence of $\tilde{u}\in C^1(B_1(0)\subset K_0,H_1^2(M))$ such that $\tilde{u}(0)=u_0$, $\tilde{u}(z)$ is a solution to \eqref{eq:u0} for all $z\in B_1(0)\subset K_0$ and 
%\begin{equation}\label{be:cond}
%K_0=\hbox{Span}\{\partial_{z_i}\tilde{u}(0)/\, i=1,...,d\}.
%\end{equation}
%In this case, $\tilde{u}(B_\delta(0))$ is a nondegenerate critical manifold for $\delta>0$ small enough. 

\medskip\noindent In case condition $(BE)$ holds, the manifold $\mathcal M$ is the nondegenerate critical manifold, and minimizing $J_{0|{\mathcal M}}$ exactly amounts to minimizing $I_{0|{\mathcal M}}$, which is a considerable simplification for our problem. However, in general, the Bianchi-Egnell condition $(BE)$ does not hold. It is even exceptional: in Section \ref{sec:ex}, we exhibit examples of degenerate minimizers $u_0$ that are isolated among solutions to \eqref{eq:u0}, and therefore, the only possible critical manifold is $\{u_0\}$ and is degenerate (see Propositions \ref{cond:min} and~\ref{prop:product}). Therefore, the classical methods using nondegenerate critical manifolds (see again the monograph Ambrosetti--Malchiodi \cite{AM}) are ineffective here. We refer to Del Pino--Felmer \cite{delpinofelmer}, Jeanjean--Tanaka \cite{JT}, Byeon--Jeanjean~\cite{BJ}, and Dancer \cite{Dancer JDE2009} for an analysis on $\rn$ without condition $(BE)$ based on topological arguments. 

\medskip\noindent Our aim in the present article is to develop a new method to deal with the absence of nondegenerate critical manifold (that is when the Bianchi-Egnell condition $(BE)$ does not hold) by using analyticity. Indeed, due to our choice of the manifold $\mathcal M$,  we are able to compare precisely all the terms  in the analytic expansions of $I_0$ and $J_0$ on $\mathcal M$. As a consequence, we prove that the restriction of $J_0$ to $\mathcal M$ has a strict local minimum at $u_0$ if and only if $u_0$ is a strict local minimizer of $I_0$ (Theorem \ref{th:minim}). This allows us to get a stable critical point for our problem.

\medskip\noindent This article is organized as follows. In Section \ref{sec:further}, we state byproducts of our analysis. In Section \ref{sec:toolbox}, we define bubbles, we state the general construction theorem via finite-dimensional reduction and we recall existing results. In Section \ref{sec:approx:u0}, we perform a first Lyapunov-Schmidt reduction to construct the analytic manifold $\mathcal M$ of approximations of $u_0$. In Section \ref{sec:reduc:J}, we  reduce the proof of Theorem \ref{th:1} to obtaining a stable well for $J_0$ restricted to $\mathcal M$. In Section \ref{sec:equiv}, we use the analyticity to prove the equivalence of strict local minimization for $I_0$ and $J_0$ on $\mathcal M$. In Section \ref{sec:ex}, we construct examples of degenerate strict local minimizers.

\medskip\noindent{\it Acknowledgement:} the authors thank the referee for careful reading of this manuscript and useful remarks. 

\section{Miscellaneous further results}\label{sec:further}

Theorem \ref{th:1} is a particular case of Theorem \ref{th:2} below:

\begin{thm}\label{th:2} 
Let $(M,g)$ be a compact Riemannian manifold of dimension $n\geq 3$. Let $h\in C^{0,\theta}(M)$ be such that $\Delta_g+h$ is coercive. Assume that there exists $u_0\in C^{2,\theta}(M)$ that is a solution to \eqref{eq:u0} and a strict local minimizer of $I_0$. Assume that one of the following situations holds:
\begin{equation}\label{hyp:th}\left\{\begin{array}{l}
3\leq n\leq 5,\\
n=6\hbox{ and } c_n R_g-h< 2u_0,\\
3\leq n\leq 9 \hbox{ and }h\equiv c_n R_g,\\
n=10 \hbox{ , }h\equiv c_n R_g\hbox{ and }u_0>\frac{5}{567}|\hbox{Weyl}_g|_g^2,\\
n\geq 3,\, (M,g)\hbox{ is locally conformally flat and }h\equiv c_n R_g.
\end{array}\right\}
\end{equation}
Then there exist a solution of type $(u_0-B)$ to \eqref{eq:ue}.
\end{thm}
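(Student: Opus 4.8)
The plan is to build the solution by a finite-dimensional Lyapunov--Schmidt reduction modeled on test functions of the form $u - B_{\delta,\xi}$, where $B_{\delta,\xi}$ is a bubble and $u$ ranges over a finite-dimensional analytic manifold $\mathcal{M}$ containing $u_0$, as announced in the introduction. First I would construct $\mathcal{M}$: using a first Lyapunov--Schmidt reduction (carried out in Section~\ref{sec:approx:u0}), one produces an analytic manifold $\mathcal{M}$, parametrized by a neighborhood of $0$ in $K_0$, consisting of functions that are "as close as possible" to solving \eqref{eq:u0}, with $u_0\in\mathcal{M}$ and $T_{u_0}\mathcal{M}=K_0$. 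Then I would invoke the general construction theorem from Robert--V\'etois \cite{robertvetoisPROC} (recalled in Section~\ref{sec:toolbox}): it reduces the existence of a solution of type $(u_0-B)$ to \eqref{eq:ue} to finding a stable critical point of a reduced functional $\Psi_\eps$ on $\mathcal{M}\times(0,\infty)\times M$ (roughly, the $(u,\delta,\xi)$ parameters), and this functional splits as an explicit local "well" term coming from the interaction of the bubble with $u$ and the geometry, plus the restriction $J_{0|\mathcal{M}}$ of a global functional.

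The core new input is the equivalence, proved via analyticity in Section~\ref{sec:equiv} (Theorem~\ref{th:minim}), that $J_{0|\mathcal{M}}$ has a strict local minimum at $u_0$ if and only if $u_0$ is a strict local minimizer of $I_0$; by hypothesis the latter holds, so $u_0$ is a strict local minimizer of $J_{0|\mathcal{M}}$. Next I would analyze the bubble-interaction term, which is where the dimensional and geometric restrictions in \eqref{hyp:th} enter. For $3\le n\le 5$, or $n=6$ with $c_nR_g-h<2u_0$, or in the locally conformally flat case, or for $n\le 9$ (resp. $n=10$ with the stated Weyl bound) when $h\equiv c_nR_g$, the leading-order term in the expansion of this interaction in $\delta$ has a favorable sign—essentially it behaves, to main order, like $-(\text{positive constant})\,\delta^{n-2}u_0(\xi) + O(\delta^{n-2+\alpha})$ in the non-conformally-flat low-dimensional cases and like a term governed by the mass of the Green's function (positive by Schoen's positive mass theorem) in the conformally flat case—so that minimizing over $\delta>0$ and $\xi\in M$ produces a genuine well. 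Combining this well in the $(\delta,\xi)$ variables with the strict local minimum of $J_{0|\mathcal{M}}$ in the $u$ variable yields, for $\eps>0$ small, a stable critical point (indeed a local minimum) of the full reduced functional, hence a solution of type $(u_0-B)$ to \eqref{eq:ue}.

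The main obstacle is precisely the degeneracy of $u_0$: because the Bianchi--Egnell condition $(BE)$ fails in general, $\mathcal{M}$ is not a critical manifold, so $J_{0|\mathcal{M}}$ is not constant and one cannot appeal to the standard theory of nondegenerate critical manifolds in Ambrosetti--Malchiodi \cite{AM}. The delicate point is therefore to control all the terms in the analytic expansions of $I_0$ and $J_0$ along $\mathcal{M}$ simultaneously and to show they match at every order up to the one governing strict minimality; this is exactly what the choice of $\mathcal{M}$ (forcing its elements to be optimal approximate solutions of \eqref{eq:u0}) and the analyticity are designed to achieve, and it is the heart of Section~\ref{sec:equiv}. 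A secondary technical difficulty is the sharp expansion of the bubble-interaction term, which is what forces the case distinction \eqref{hyp:th}: in the borderline dimensions $n=6,10$ the sign of the leading coefficient depends on pointwise comparisons (with $2u_0$, or with $\tfrac{5}{567}|\mathrm{Weyl}_g|_g^2$), and below the critical dimensions or in the conformally flat case one must instead extract the mass term and use the positive mass theorem of Schoen \cite{Sch3}.
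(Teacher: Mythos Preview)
Your overall architecture is correct and matches the paper: build the analytic manifold $\mathcal{M}$ via Proposition~\ref{prop:def:phibar}, apply the reduction Theorem~\ref{th:LS}, and use Theorem~\ref{th:minim} to pass from strict minimality of $I_0$ to strict minimality of $\varphi\mapsto J_0(u(\varphi))$. But your description of the bubble--interaction part, which is where \eqref{hyp:th} actually enters, is wrong in a way that would derail the argument.

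First, the scaling. After setting $\delta=t\,\eps^{2/(n-2)}$, the expansion \eqref{eq:J:3} reads
\[
J_\eps(u(\varphi)+W+\phi)=J_0(u(\varphi))+\hbox{const.}+\eps\cdot\frac{K_n^{-n}}{n}\left(\frac{(n-2)^2}{4}\ln\frac{1}{t}+F(\varphi,\xi)\,t^{\frac{n-2}{2}}\right)+o(\eps),
\]
so the interaction term is of order $\delta^{(n-2)/2}$, not $\delta^{n-2}$, and its sign is \emph{positive}: for $\kappa=-1$ the leading part of $F(0,\xi)$ is $+c\,u_0(\xi)$ with $c>0$. The well in $t$ does not come from a negative leading term corrected by higher-order ones; it comes from the balance between the logarithm $\frac{(n-2)^2}{4}\ln(1/t)$ (produced by the $\eps$-perturbation of the exponent) and the positive power $F(0,\xi)\,t^{(n-2)/2}$. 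This balance yields an interior minimum precisely when $F(0,\xi)>0$, and the list \eqref{hyp:th} is exactly the set of conditions guaranteeing $F(0,\xi)>0$ for all $\xi$, via the explicit formula for $F(0,\xi)$ in Section~\ref{sec:reduc:J}.

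Second, and more seriously, the positive mass theorem plays no role here. In the locally conformally flat case with $h\equiv c_nR_g$, the geometric correction to $F(0,\xi)$ is simply $0$, so $F(0,\xi)=c\,u_0(\xi)>0$ directly; the $O(\delta^{n-2})$ term in \eqref{exp:J:W} is of order $\eps^2=o(\eps)$ and is absorbed in the remainder. The mechanism is the interaction of the bubble with the nonzero weak limit $u_0$ at order $\delta^{(n-2)/2}$, which dominates any mass-type term. If you try to run the argument via a mass term you will be looking at the wrong order and the proof will not close.
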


\noindent We are also in position to construct positive solutions in dimension $n=6$.

\begin{thm}\label{th:3}
Let $(M,g)$ be a smooth compact Riemannian manifold of dimension $n=6$ and let $h\in C^{0,\theta}(M)$ be such that $\Delta_g+h$ is coercive. Assume that there exists $u_0\in C^{2,\theta}(M)$ that is both a solution to \eqref{eq:u0} and an strict local minimizer of $I_0$. Assume that
\begin{equation}\label{hyp:6}
h-c_6R_g>2u_0>0\text{ in }M.
\end{equation}
Then for $\varepsilon>0$ small, equation \eqref{eq:ue} admits a solution $u_\varepsilon>0$ such that $\ue=u_0+B_{\eps}+o(1)$, where $(B_{\eps})_\eps$ is a bubble and $\lim_{\eps\to 0}o(1)=0$ in $H_1^2(M)$.
\end{thm}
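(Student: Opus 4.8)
The plan is to mirror the reduction scheme used for Theorem \ref{th:2}, but with the opposite sign in front of the bubble, so that the ansatz is $u = u_0 + B_\eps + \phi$ with $\phi$ a small remainder. First I would set up the finite-dimensional reduction exactly as in Section \ref{sec:toolbox}: build the analytic manifold $\mathcal M$ of quasi-solutions to \eqref{eq:u0} through $u_0$ via a first Lyapunov--Schmidt reduction (Section \ref{sec:approx:u0}), and then, for $u \in \mathcal M$ and a bubble $B_\eps$ concentrating at a point of $M$, solve for the orthogonal remainder $\phi$ to reduce \eqref{eq:ue} to a finite-dimensional problem. The reduced energy again splits as an explicit local well coming from the bubble plus the restriction $J_{0|\mathcal M}$, and by Theorem \ref{th:minim} the latter has a strict local minimum at $u_0$ precisely because $u_0$ is a strict local minimizer of $I_0$. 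One then extracts a stable critical point, which produces a genuine solution $\ue = u_0 + B_\eps + o(1)$ in $H_1^2(M)$.

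The only place where the sign of the bubble and the dimension $n=6$ really matter is in the sign of the leading interaction term between $u_0$ and $B_\eps$ in the energy expansion. For $n = 6$ the critical exponent is $\crit = 3$, and the nonlinearity $|u_0 + B_\eps|^{\crit-2}(u_0+B_\eps) = (u_0+B_\eps)^2$ expanded near the concentration point produces a cross term whose dominant contribution is, up to a positive constant, $\int_M u_0 B_\eps^{\,\crit-1}\,dv_g$ together with a term of the type $\big(h - c_6 R_g - 2u_0\big)\,\mu^2\log(1/\mu)$ or $\mu^2$ depending on the precise bookkeeping, where $\mu$ is the concentration parameter. The hypothesis \eqref{hyp:6}, namely $h - c_6 R_g > 2 u_0 > 0$, is exactly what makes this coefficient have the sign needed for the local well in $\mu$ to open downward as $\mu \to 0$ in the $+B_\eps$ configuration (the reversed inequality relative to the $n=6$ line of \eqref{hyp:th}, where $c_n R_g - h < 2u_0$ governs the $-B_\eps$ case), so that a stable critical point exists for $\eps$ small. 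I would carry out this energy expansion carefully, isolating the $\mu^2$-order term and checking that \eqref{hyp:6} forces the correct monotonicity.

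Positivity of $\ue$ is the last point: since $u_0 > 0$ is bounded below away from zero and $B_\eps > 0$, the approximate solution $u_0 + B_\eps$ is positive and bounded below by a positive constant away from the concentration point, while near the concentration point $B_\eps$ dominates; the remainder $\phi$ is small in $H_1^2$ and, by elliptic estimates built into the construction, also in a suitable pointwise sense relative to $u_0 + B_\eps$, so $\ue = u_0 + B_\eps + \phi > 0$ for $\eps$ small. The main obstacle I anticipate is precisely the sharp energy expansion at the borderline dimension $n=6$: the interaction integral $\int u_0 B_\eps^{\,\crit-1}$ and the self-interaction corrections are of the same order in $\mu$, so one must track constants and the role of the potential $h - c_6 R_g - 2u_0$ with care to see that \eqref{hyp:6} yields a stable well rather than a saddle or an unstable maximum. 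Everything else is a routine adaptation of the machinery already developed for Theorem \ref{th:2}.
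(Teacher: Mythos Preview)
Your proposal is correct and follows the same route as the paper, where Theorems \ref{th:2} and \ref{th:3} are proved simultaneously in Section \ref{sec:reduc:J}: one takes $\kappa = +1$ (so $H(u)=u_+$), and hypothesis \eqref{hyp:6} is precisely what forces $F(0,\xi)>0$ in the expansion \eqref{eq:J:3}, yielding the interior minimum. Two minor clarifications: for $n=6$ both the self-interaction correction $(h-c_6R_g)(\xi)\delta^2$ from \eqref{exp:J:W} and the cross term $-\kappa\,C\,u_0(\xi)\delta^{(n-2)/2}=-C\,u_0(\xi)\delta^2$ from \eqref{eq:W:2} are of pure order $\delta^2$ (no logarithm), and positivity of $u_\eps$ is obtained not by pointwise control of the remainder but through the choice $H(u)=u_+$ in $F_\eps$, so that any critical point solves $\Delta_g u_\eps + h u_\eps = (u_\eps)_+^{\crit-1-\eps}$ and the maximum principle gives $u_\eps>0$.
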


\section{Bubbles, general existence theorem and preliminary computations}\label{sec:toolbox}

This section essentially collects existing results from Robert--V\'etois \cites{robertvetois,robertvetoisPROC}.

\subsection{Bubbles}

We follow the terminology in \cites{robertvetoisPROC}. We say that $(B_\eps)_\eps$ is a bubble if there exists $(\xe)_\eps\in M$ and $(\mu_\eps)_\eps\in (0,+\infty)$ such that $\lim_{\eps\to 0}\mu_\eps=0$ and
\begin{equation}\label{def:bubble}
B_\eps(x):=\left(\frac{\sqrt{n(n-2)}\mu_\eps}{\mu_\eps^2+d_g(x,x_\eps)^2}\right)^{\frac{n-2}{2}}\hbox{ for all }x\in M.
\end{equation}
There exists $r_0\in (0, i_g(M))$ and $\varLambda\in C^{\infty}(M\times M)$ such that $(\xi,x)\mapsto \varLambda_\xi(x)>0$, $\varLambda_\xi(\xi)=1$ and :
\begin{enumerate}
\renewcommand{\labelenumi}{(\roman{enumi})}
\item If $(M,g)$ is locally conformally flat (lcf), then $g_\xi=\varLambda_\xi^{4/(n-2)}g$ is flat in $B_\xi(r_0)$.
\item If $(M,g)$ is not locally conformally flat (non lcf) then $g_\xi:=\Lambda_\xi^{\frac{4}{n-2}}g$ satisfies $dv_{g_{\xi}}=(1+O(d_{g_\xi}(\xi,\cdot)^n))\, dx$ in a geodesic normal chart. An immediate consequence is that $R_{g_\xi}(\xi)=|\nabla R_{g_\xi}(\xi)|_{g_\xi}=0$ and $\Delta_{g_\xi}R_{g_\xi}(\xi)=\frac{1}{6}|\hbox{Weyl}_g(\xi)|^2_g$. Moreover, $\nabla\Lambda_\xi(\xi)=0$. This  change of metric is due to Lee--Parker~\cite{leeparker}.
\end{enumerate}
We let $\chi$ be a smooth cutoff function such that $0\le\chi\le1$ in $\mathbb{R}$, $\chi=1$ in $[-r_0/2,r_0/2]$, and $\chi=0$ in $\mathbb{R}\backslash(-r_0,r_0)$. For any $\kappa\in\{-1,1\}$, any positive real number $\delta$ and any point $\xi$ in $M$, we define the function $W_{\kappa,\delta,\xi}$ on $M$ by
$$W_{\kappa,\delta,\xi}(x):=\kappa\chi(d_{g_\xi}(x,\xi))\varLambda_\xi(x)\left(\frac{\sqrt{n(n-2)}\delta}{\delta^2+d_{g_\xi}(x,\xi)^2}\right)^{\frac{n-2}{2}},$$
where $d_{g_\xi}$ is the geodesic distance on $M$ associated with the metric $g_\xi$, the exponential map is taken with respect to the same metric $g_\xi$. As one checks, for any family $(\delta_\eps)_\eps\in (0,+\infty)$ going to $0$ as $\eps\to 0$, there exists a bubble $(B_\eps)_\eps$ such that
\begin{equation}\label{dl:W}
W_{\kappa,\delta_\eps,\xi_\eps}=\kappa B_\eps+o(1)
\end{equation}
in $H_1^2(M)$ when $\eps\to 0$. Bubbles like $W_{\kappa,\delta,\xi}$ with a modification of the metric were introduced by Lee-Parker for an alternate resolution of the Yamabe problem. Using these bubbles smoothly depending on $\xi$ for finite dimensional reduction was first used in the article \cite{EspPisVet} by the second author and his collaborators.

\medskip\noindent{\bf Notations:} Here and in the sequel, $(\Delta_g+h)^{-1}$ denotes the inverse of the natural isometric isomorphism
\begin{equation*}
\begin{array}{cccc}
\Delta_g+h : & H_1^2(M) &\to & (H_1^2(M))^\prime\\
 & \phi &\mapsto & \left(\tau\mapsto \int_M((\nabla\phi,\nabla\tau)_g+h\phi \tau)\, dv_g\right).
\end{array}
\end{equation*}
Any function $f\in L^{\frac{2n}{n+2}}(M)=(L^{\crit}(M))^\prime$ is seen as a linear form on $H_1^2(M)$. In the sequel $C$ will denote a constant independent of $\xi,\delta,\varphi,\eps$. The value of $C$ can change from one line to the other for simplicity.

\subsection{General existence theorem}

For any $\nu_0>0$ and $\eps>0$, we define
$${\mathcal D}_\eps(\nu_0):=\{(\delta,\xi)\in (0,\nu_0)\times M\, /\, |\delta^\eps-1|<\nu_0\}.$$
We define for $\epsilon\in [0,\crit-2)$
$$J_\eps(u):=\frac{1}{2}\int_M\left(|\nabla u|_g^2+hu^2\right)\, dv_g-\frac{1}{\crit-\eps}\int_M |u|^{\crit-\eps}\, dv_g=\frac{1}{2}\Vert u\Vert_h^2-F_\eps(u)$$
for all $u\in H_1^2(M)$, where 
$$\Vert u\Vert_h^2=(u,u)_h=\int_M\left(|\nabla u|_g^2+hu^2\right)\, dv_g\hbox{ and }F_\eps(u):=\frac{1}{\crit-\eps}\int_M H(u)^{\crit-\eps}\, dv_g.$$
Here, $H(u):=|u|$ if $\kappa=-1$ and $H(u):=u_+$ if $\kappa=1$. For any closed subspace $L\subset H_1^2(M)$, $\Pi_L$ will denote the orthogonal projection onto $L$ and $L^\perp$ the orthogonal complement of $L$ with respect to the Hilbert structure $(\cdot,\cdot)_h$.

\medskip\noindent We let $u\in C^1(B_{\nu_0}(0)\subset K_0, H_1^2(M))$ be such that $u(0)=u_0$ and 
\begin{equation}\label{ppty:u:phi}
\left | \det (\Pi_{K_0}\partial_1 u (\varphi),\cdots,\Pi_{K_0}\partial_d u (\varphi)) \right |\geq c_0\prod_{i=1}^d\Vert \partial_i u(\varphi)\Vert_{H_1^2}
\end{equation}
for some $c_0>0$ and all $\varphi\in B_{\nu_0}(0)\subset K_0$. Here, $d:=\dim_{\rr}(K_0)$ and derivatives refer to a fixed basis of $K_0$. %We fix a domain $U\subset M$ such that there exists smooth vector fields $e_1,...,e_n: M\mapsto TM$ such that for all $p\in U$, $\{e_1(p),\dots, e_n(p)\}$ is an orthonormal basis of the tangent space $T_pM$. 
The following existence theorem is a  consequence of Theorem 1.1 in Robert--V\'etois \cite{robertvetoisPROC}:

\begin{thm}\label{th:LS} 
There exists $\nu_0>0$ and there exists $\phi_\eps\in C^1(B_{\nu_0}(0)\times {\mathcal D}_\eps(\nu_0), K_0^\perp)$ such that 
for all $\varphi\in B_{\nu_0}(0)\subset K_0$, $(\delta,\xi)\in {\mathcal D}_\eps(\nu_0)$, the function $u_\eps(\varphi, \delta,\xi):=u(\varphi)+W_{\kappa,\delta,\xi}+\phi_\eps(\varphi,\delta,\xi)$ is a critical point for $J_\eps$ if and only if $(\varphi,\delta,\xi)$ is a critical point of $(\varphi, \delta,\xi)\mapsto J_\eps(u_\eps(\varphi,\delta,\xi))$. Moreover, $\Vert \phi_\eps(\varphi,\delta,\xi)\Vert_h\leq C\cdot R_\eps(\varphi,\delta, \xi)$ where
\begin{equation}\label{def:R}
R_\eps(\varphi, \delta,\xi):=\Vert \Pi_{K_{\delta,\xi}^\perp}\left(u(\varphi)+W_{\kappa,\delta,\xi}-(\Delta_g+h)^{-1}(F_\eps^\prime (u(\varphi)+W_{\kappa,\delta,\xi}))\right)\Vert_h\,.
\end{equation}
The space $K_{\delta,\xi}$ is defined below. 
\end{thm}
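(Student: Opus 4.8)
The plan is to recognize this as a standard Lyapunov--Schmidt finite-dimensional reduction and to deduce it from Theorem 1.1 of \cite{robertvetoisPROC} by verifying that our present setup matches the abstract framework there. First I would set up the splitting: fix $(\varphi,\delta,\xi)$ in the parameter domain $B_{\nu_0}(0)\times{\mathcal D}_\eps(\nu_0)$, write the candidate solution as $u_\eps=u(\varphi)+W_{\kappa,\delta,\xi}+\phi$ with $\phi$ to be determined, and decompose $H_1^2(M)=K_{\delta,\xi}\oplus K_{\delta,\xi}^\perp$, where $K_{\delta,\xi}$ is the finite-dimensional space spanned by the approximate kernel directions — the span of $K_0$-directions $\partial_i u(\varphi)$ together with the derivatives of $W_{\kappa,\delta,\xi}$ in $\delta$ and in $\xi$ (this is the space referred to as "defined below" in the statement). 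The Euler--Lagrange equation $J_\eps'(u_\eps)=0$ is then equivalent to the pair: (a) the \emph{auxiliary equation} $\Pi_{K_{\delta,\xi}^\perp}J_\eps'(u(\varphi)+W_{\kappa,\delta,\xi}+\phi)=0$, to be solved for $\phi=\phi_\eps(\varphi,\delta,\xi)\in K_{\delta,\xi}^\perp$, and (b) the \emph{bifurcation equation} $\Pi_{K_{\delta,\xi}}J_\eps'(u_\eps)=0$, which is the finite-dimensional residual problem.

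Next I would solve the auxiliary equation. Writing $J_\eps'(u)=(\Delta_g+h)(u)-F_\eps'(u)$ under the identification of $L^{2n/(n+2)}(M)$ with a subspace of $(H_1^2(M))'$ and using the isometric isomorphism $(\Delta_g+h)^{-1}$ from the Notations paragraph, the auxiliary equation becomes a fixed-point problem of the form $\phi=N_{\eps}(\varphi,\delta,\xi,\phi)$ on a small ball of $K_{\delta,\xi}^\perp$. The two ingredients are: uniform invertibility (with respect to $\eps,\varphi,\delta,\xi$) of the linearized operator $\Pi_{K_{\delta,\xi}^\perp}\circ(\mathrm{Id}-(\Delta_g+h)^{-1}F_\eps''(u(\varphi)+W_{\kappa,\delta,\xi}))$ restricted to $K_{\delta,\xi}^\perp$ — which holds precisely because the genuine small eigenvalues of the linearization at the approximate solution are captured by $K_{\delta,\xi}$, so that on its orthogonal complement the operator is bounded below, the nondegeneracy being quantified by \eqref{ppty:u:phi} for the $K_0$-part and by the classical bubble computation for the concentration part — and a contraction estimate coming from the superlinearity of $F_\eps$. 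The contraction mapping theorem then yields a unique small $\phi_\eps(\varphi,\delta,\xi)$, with the error bound $\Vert\phi_\eps\Vert_h\leq C\,R_\eps(\varphi,\delta,\xi)$, where $R_\eps$ defined in \eqref{def:R} is exactly the $K_{\delta,\xi}^\perp$-component of the residual of the approximate solution under the natural Picard map $v\mapsto(\Delta_g+h)^{-1}F_\eps'(v)$; this is the standard "size of the error $=$ size of $\phi$" estimate. The $C^1$ dependence of $\phi_\eps$ on $(\varphi,\delta,\xi)$ follows from the implicit function theorem, differentiating the auxiliary equation.

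Finally I would record the reduction of the bifurcation equation. Once $\phi=\phi_\eps(\varphi,\delta,\xi)$ solves the auxiliary equation, $u_\eps(\varphi,\delta,\xi)$ is a critical point of $J_\eps$ if and only if the bifurcation equation (b) holds; and a now-classical argument (see \cite{robertvetoisPROC}) shows that (b) is equivalent to $(\varphi,\delta,\xi)$ being a critical point of the reduced functional $(\varphi,\delta,\xi)\mapsto J_\eps(u_\eps(\varphi,\delta,\xi))$ — the chain rule gives $\partial J_\eps(u_\eps(\cdot))=\langle J_\eps'(u_\eps),\partial u_\eps\rangle$, and since $\phi_\eps\in K_{\delta,\xi}^\perp$ solves the auxiliary equation, the terms involving $\partial\phi_\eps$ drop out up to lower order, so the reduced gradient vanishes exactly when $\Pi_{K_{\delta,\xi}}J_\eps'(u_\eps)=0$. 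Since all of this is the content of Theorem 1.1 of \cite{robertvetoisPROC} applied with the manifold of approximate solutions $u(\cdot)$ constructed earlier and with the bubbles $W_{\kappa,\delta,\xi}$, the proof amounts to checking the hypotheses of that theorem — principally the nondegeneracy condition \eqref{ppty:u:phi} on $u(\cdot)$ — and invoking it. I expect the main (and only nonroutine) obstacle to be the uniform coercivity of the linearized operator on $K_{\delta,\xi}^\perp$: one must ensure that the kernel directions coming from $K_0$ (through $u(\varphi)$) and those coming from the bubble's scaling and translation do not interact destructively as $\delta\to0$, i.e. that the full approximate kernel $K_{\delta,\xi}$ is "almost orthogonal" and that the spectral gap is uniform — but in the present normalization this is exactly what \eqref{ppty:u:phi} together with the well-separated scales $\Vert u(\varphi)\Vert_h\sim 1$ versus the bubble concentrating at scale $\delta$ provides, and it has been carried out in \cite{robertvetoisPROC}.
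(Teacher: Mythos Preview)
Your proposal is correct and takes essentially the same approach as the paper: the paper does not prove Theorem~\ref{th:LS} in detail but simply states that it is a consequence of Theorem~1.1 in \cite{robertvetoisPROC}, together with the remark that $\phi_\eps$ is characterized implicitly by $\Pi_{K_{\delta,\xi}^\perp}\bigl(u_\eps-(\Delta_g+h)^{-1}F_\eps'(u_\eps)\bigr)=0$ and that the form of $R_\eps$ with the projection onto $K_{\delta,\xi}^\perp$ comes from Subsection~5.3 of \cite{robertvetoisPROC}. Your outline of the underlying Lyapunov--Schmidt mechanics (auxiliary/bifurcation splitting, contraction for $\phi$, reduction of the bifurcation equation to criticality of the reduced functional) is exactly what that cited theorem encapsulates; the only minor discrepancy is that the paper defines $K_{\delta,\xi}$ as the span of $K_0$ itself together with $Z_{\delta,\xi}$ and $Z_{\delta,\xi,X}$, rather than the span of the $\partial_i u(\varphi)$, but by \eqref{ppty:u:phi} these yield equivalent approximate kernels for the reduction.
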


\noindent The projection onto $K_{\delta,\xi}^\perp$ in the rest $R_\eps(\varphi, \delta,\xi)$ follows from Subsection 5.3 in \cite{robertvetoisPROC}. The function $\phi_\eps$ is defined implicitely as follows: given $(\varphi,\delta,\xi)\in B_{\nu_0}(0)\times {\mathcal D}_\eps(\nu_0)$, $\phi_\eps(\varphi,\delta,\xi)$ is the sole element of $K_{\delta,\xi}^\perp$ such that
$$\Pi_{K_{\delta,\xi}^\perp}\left(u_\eps(\varphi,\delta,\xi)-(\Delta_g+h)^{-1}(F_\eps^\prime (u_\eps(\varphi,\delta,\xi))\right)=0\,.$$
%In addition, it is not possible to replace $\phi(\varphi,\delta,\xi)$ by another element of $K_0^\perp$ close to $0$ in this expression. 
The linear space $K_{\delta,\xi}$ is defined as
$$K_{\delta,\xi}:=\hbox{Span}\left\{\varphi\, ,\, Z_{\delta,\xi}\, ,\,  Z_{\delta,\xi,X} \, ,\, \varphi\in  K_0\hbox{ and }X\in T_\xi M\right\},$$
where
\begin{align*}
&Z_{\delta,\xi}(x):=\chi(d_{g_\xi}(x,\xi))\Lambda_\xi(x)\delta^{\frac{n-2}{2}}\frac{d_{g_\xi}(x,\xi)^2-\delta^2}{(\delta^2+d_{g_\xi}(x,\xi)^2)^{\frac{n}{2}}}\,,\\%\label{def:Z}
&Z_{\delta,\xi,X}(x):=\chi(d_{g_\xi}(x,\xi))\Lambda_\xi(x)\delta^{\frac{n}{2}}\frac{\langle (\hbox{exp}_\xi^{g_\xi})^{-1}(x),X\rangle_{g_\xi(\xi)}}{(\delta^2+d_{g_\xi}(x,\xi)^2)^{\frac{n}{2}}}%\label{def:Z:omega}
\end{align*}
for all $x\in M$.

\subsection{Estimate of the error term}

%\medskip\noindent In the sequel, we define
%$$F_\eps(u):=\frac{1}{\crit-\eps}\int_M |u|^{\crit-\eps}\, dv_g$$
%for all $u\in H_1^2(M)$. 
For simplicity, we will often write $W:=W_{\kappa,\delta,\xi}$ and $\phi:=\phi_\eps(\varphi,\delta, \xi)$ in this section. 
%Since $F_\eps\in C^{2,\theta}(H_1^2(M))$ for all $\theta\in (0,\min \{\crit-2,1\})$ with associated norm uniformly bounded on any bounded domain with respect to $\epsilon$ small, a Taylor expansion yields
%\begin{eqnarray}
%F_\eps(u(\varphi,\delta, \xi))&=& F_\eps(u(\varphi)+W+\phi))\\
%&=& F_\eps'(u(\varphi))+F_\eps'(W)+F_\eps^{\prime\prime}(u(\varphi)+W)\phi\\
%&& +O\left(\Vert \phi\Vert_h^{1+\theta}\right)+O\left(\Vert F_\eps'(u(\varphi)+W)-F_\eps'(u(\varphi)-F_\eps'(W)\Vert_{H_1^2(M)'}\right)
%\end{eqnarray}
It follows from \cite{robertvetois}*{Sections 5 and~7}, that
\begin{equation}\label{ineq:F:prime}
\Vert F_\eps'(u(\varphi)+W)-F_\eps'(u(\varphi))-F_\eps'(W)\Vert_{H_1^2(M)'}\leq C\cdot \eps_1(\delta),
\end{equation}
\begin{equation}\label{ineq:F}
 F_\eps(u(\varphi)+W)-F_\eps(u(\varphi))-F_\eps(W)-F_\eps'(u(\varphi))W-F_\eps'(W)u(\varphi)=O\left(\eps_2(\delta)\right),
\end{equation}
and
\begin{multline}
\Vert W-(\Delta_g+h)^{-1}(F_\eps^\prime(W))\Vert_h\label{ineq:W:error}\\
\leq C\cdot \left(\eps\ln\frac{1}{\delta}+\eps_1(\delta) +{\bf 1}_{\left\{n\geq 7\right\}}\Vert h-c_n R_g\Vert_\infty\delta^2 +{\bf 1}_{\left\{n\geq 15\hbox{ and non lcf}\right\}}\delta^4\right),
\end{multline}
where
\begin{equation}\label{def:eps:2}
\eps_1(\delta):=\left\{\begin{array}{ll}
\delta^{\frac{n-2}{2}} & \hbox{ if }n<6\\
\delta^2\left(\ln \frac{1}{\delta}\right)^{\frac{2}{3}}& \hbox{ if }n=6\\
\delta^{\frac{n+2}{4}} & \hbox{ if }n>6
\end{array}\right\}
\text{ and }\eps_2(\delta):=\left\{\begin{array}{ll}
\delta & \hbox{ if }n=3\\
\delta^2\ln \frac{1}{\delta}& \hbox{ if }n=4\\
\delta^{\frac{n}{2}} & \hbox{ if }n\geq 5
\end{array}\right\}.
\end{equation}
Plugging \eqref{ineq:F:prime} and \eqref{ineq:W:error} in \eqref{def:R} yields
\begin{eqnarray}
%&&\Vert \Pi_{K_{\delta,\xi}^\perp}\left(u(\varphi)+W_{\kappa,\delta,\xi}-(\Delta_g+h)^{-1}((u(\varphi)+W_{\kappa,\delta,\xi})^{\crit-1-\eps})\right)\Vert_h\\
&&R_\eps(\varphi,\delta,\xi)\leq  C\cdot \Vert \Pi_{K_{\delta,\xi}^\perp}\left(u(\varphi)-(\Delta_g+h)^{-1}(F_0'(u(\varphi))\right)\Vert_h\label{bnd:R:1}\\
&&+O\left(\eps\ln\frac{1}{\delta}+\eps_1(\delta) +{\bf 1}_{\left\{n\geq 7\right\}}\Vert h-c_n R_g\Vert_\infty\delta^2 +{\bf 1}_{\left\{n\geq 15\hbox{ and non lcf}\right\}}\delta^4\right)\nonumber.
\end{eqnarray}
%It then follows from XXX that
%\begin{eqnarray}
%&&\Pi_{K_{\delta,\xi}^\perp}\left(\phi-(\Delta_g+h)^{-1}(F_\eps'(u(\varphi)+W)\phi)\right) =\Pi_{K_{\delta,\xi}^\perp}\left(u(\varphi)-(\Delta_g+h)^{-1}(F_0'(u(\varphi))\right)\\
%&&+O\left(\Vert \phi\Vert_h^{1+\theta}\right)+O\left(\eps\ln\frac{1}{\delta}+\eps(\delta) +1_{n\geq 7}\Vert h-c_n R_g\Vert_\infty\delta^2 +1_{n\geq 15\hbox{ and }non\, lcf}\delta^4\right)
%\end{eqnarray}
%Since XXX holds and $L$ is invertible around $0$ (see XXX), we then get that\footnote{aller plus vite car on a deja le controle de la norme de phi dans le theoreme}
%\begin{eqnarray}
%&&\Vert \phi\Vert_h\leq C\cdot \left(\eps\ln\frac{1}{\delta}+\eps(\delta) +1_{n\geq 7}\Vert h-c_n R_g\Vert_\infty\delta^2 +1_{n\geq 15\hbox{ and }non\, lcf}\delta^4\right)
%\end{eqnarray}

\subsection{First expansion of the energy $J_\eps$}

The Taylor expansion of $J_\eps$, the control of $\phi_\eps$ in Theorem \ref{th:LS} and the definition \eqref{def:R} of $R_\eps(\varphi,\delta,\xi)$ yield
\begin{align*}
&J_\eps(u(\varphi)+W+\phi)\\
=& J_\eps(u(\varphi)+W)+(u(\varphi)+W-(\Delta_g+h)^{-1}(F_\eps^\prime(u(\varphi)+W)), \phi)_h+O(\Vert\phi\Vert_h^2)\\
=&J_\eps(u(\varphi)+W)+(\Pi_{K_{\delta,\xi}^\perp}(u(\varphi)+W-(\Delta_g+h)^{-1}(F_\eps^\prime(u(\varphi)+W))), \phi)_h+O(\Vert\phi\Vert_h^2)\\
=& J_\eps(u(\varphi)+W)+O(R_\eps(\varphi,\delta,\xi)^2).
\end{align*}
It then follows from \eqref{ineq:F}  and \eqref{def:eps:2} that
\begin{multline}
J_\eps(u(\varphi)+W+\phi)= J_\eps(u(\varphi)) + J_\eps(W_{\kappa,\delta,\xi})\label{eq:J:1}\\
+\big(u(\varphi)-(\Delta_g+h)^{-1}(F_\eps'(u(\varphi))),W\big)_h
-F_\eps'(W)u(\varphi)+O\left(R_\eps(\varphi,\delta,\xi)^2+\eps_2(\delta)\right).
\end{multline}
Since $\varphi\mapsto u(\varphi)>0$ is $C^1$, $u(0)=u_0$ is a solution to \eqref{eq:u0}, we get that
\begin{equation}\label{eq:W:1}
\big(u(\varphi)-(\Delta_g+h)^{-1}(u(\varphi)^{\crit-1-\eps}),W\big)_h= f_1(\varphi,\xi)\delta^{\frac{n-2}{2}}+o(\delta^{\frac{n-2}{2}})
\end{equation}
when $\delta,\eps\to 0$ and $f_1\in C^1(B_{\nu_0}(0)\times M, \rr)$ ($B_{\nu_0}(0)\subset K_0$) and $f_1(0,\xi)=0$ for all $\xi\in M$. It follows from \cite{robertvetois} that
\begin{equation}\label{eq:W:2}
F_\eps'(W)u(\varphi)= \frac{\kappa 2^n\omega_{n-1}K_n^{-n}}{n(n(n-2))^{\frac{n-2}{4}}\omega_n}u(\varphi)[\xi]\delta^{\frac{n-2}{2}}+O(\delta^{\frac{n-2}{2}}(o(1)+|\delta^\eps-1|))
%F_\eps'(W_{\kappa,\delta,\xi})u(\varphi)=\kappa \left(\int_{\rn}U_0^{\crit-1}\, dx\right)u(\varphi)[\xi]\delta^{\frac{n-2}{2}}+O(\delta^{\frac{n-2}{2}}(o(1)+|\delta^\eps-1|))
\end{equation}
when $(\delta,\eps)\to 0$. Here, $\omega_k$ is the volume of the canonical unit $k-$sphere in $\rr^{k+1}$ and $K_n$ is the best constant of the Sobolev inequality $\Vert u\Vert_{\crit}\leq K\Vert\nabla u\Vert_2$ in $\rn$. %where $U_0(x):=\left(\sqrt{n(n-2)}/(1+|x|^2)\right)^{\frac{n-2}{2}}$ for all $x\in \rn$. 
Finally, expanding $J_\eps(u(\varphi))$ with respect to $\eps$ and collecting  \eqref{eq:J:1}, \eqref{eq:W:1} and \eqref{eq:W:2} yield
\begin{multline}
J_\eps(u(\varphi)+W+\phi)= J_0(u(\varphi)) + \eps f_2(\varphi)+J_\eps(W_{\kappa,\delta,\xi}) \label{eq:J:2}\\
+\left(f_1(\varphi,\xi)-\frac{\kappa 2^n\omega_{n-1}K_n^{-n}}{n(n(n-2))^{\frac{n-2}{4}}\omega_n}u(\varphi)[\xi]\right)\delta^{\frac{n-2}{2}}\\
+O\left(R_\eps(\varphi,\delta,\xi)^2+\eps_2(\delta)+\delta^{\frac{n-2}{2}}(o(1)+|\delta^\eps-1|)\right) +o(\eps)
\end{multline}
when $\delta,\eps\to 0$. Here, $f_2\in C^1(B_{\nu_0}(0)\subset K_0, \rr)$

\subsection{Expansion of $J_\eps(W_{\kappa,\delta,\xi})$}

The following result was obtained in \cite{robertvetois}: there exists $\beta_n>0$ such that
\begin{multline}
J_\eps(W_{\kappa,\delta,\xi})=\frac{K_n^{-n}}{n}\left(1-\beta_n\eps-\frac{(n-2)^2}{4}(\delta^\eps-1)\right)+O\left(\eps\delta^2+\eps^2+(\delta^\eps-1)^2\right)\label{exp:J:W}\\
+O\left({\bf 1}_{\left\{n\leq 5\hbox{ or lcf}\right\}}\delta^{n-2}\right)\\
+\frac{K_n^{-n}}{n}\left\{
\begin{array}{ll}
O\left(\Vert h-c_3 R_g\Vert_{C^{0,\theta}}\delta\right)&\hbox{ if }n=3\\
&\\
3(h-c_4 R_g)(\xi)\delta^2\ln\frac{1}{\delta}+O\left(\Vert h-c_4 R_g\Vert_{C^{0,\theta}}\delta^{2}\right)&\hbox{ if }n=4\\
&\\
\frac{2(n-1)}{(n-2)(n-4)}(h-c_n R_g)(\xi)\delta^2+O\left(\Vert h-c_n R_g\Vert_{C^{0,\theta}}\delta^{2+\theta}\right)&\hbox{ if }n\geq 5
\end{array}\right\}\\
+\frac{K_n^{-n}}{n}\left\{
\begin{array}{ll}
-\frac{1}{64}|\hbox{Weyl}_g(\xi)|_g^2\delta^4\ln\frac{1}{\delta}+O\left(\delta^{4}\right)&\hbox{ if }n=6\hbox{ and non lcf}\\
&\\
-\frac{1}{24(n-4)(n-6)}|\hbox{Weyl}_g(\xi)|_g^2\delta^4+O(\delta^5)&\hbox{ if }n\geq 7\hbox{ and non lcf}
\end{array}\right\}.
\end{multline}

\section{Suitable approximation of $u_0$ and analyticity}\label{sec:approx:u0}

In \cite{robertvetois}, the blowing-up solutions of type $(u_0-B)$ are directly modeled on a nondegenerate function $u_0$. When $u_0$ is degenerate, the kernel $K_0$ plays a role in the finite-dimensional reduction and we consider a manifold of functions around $u_0$ parametrized locally by $K_0$.

\begin{prop}\label{prop:def:phibar}
There exist $\nu_0>0$ small and $\phi\in C^1(B_{\nu_0}(0)\subset K_0, K_0^\perp)$ such that for all $\varphi\in K_0$ and $\psi\in K_0^{\perp}$ satisfying $\Vert\varphi\Vert_h,\Vert\psi\Vert_h<\nu_0$, we have that
$$\Pi_{K_0^\perp}(u_0+\varphi+\psi-(\Delta_g+h)^{-1}(F_0'(u_0+\varphi+\psi)))=0\,\Leftrightarrow\, \psi=\phi(\varphi).$$
In particular, $\phi$ vanishes up to order $1$ at $0$. Moreover, taking $\nu_0$ smaller if necessary, $u_0+\varphi+\phi(\varphi)\in C^{2,\theta}(M)$ is positive for all $\varphi\in B_{\nu_0}(0)$ and $\phi: B_{\nu_0}(0)\to C^{2,\theta}(M)$ is analytic with respect to the associated topologies.
\end{prop}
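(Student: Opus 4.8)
The plan is to set up a Lyapunov–Schmidt reduction for the equation $u - (\Delta_g+h)^{-1}(F_0'(u))=0$ near $u_0$, projected onto $K_0^\perp$. Write $u = u_0 + \varphi + \psi$ with $\varphi\in K_0$ and $\psi\in K_0^\perp$, and define the map
$$
G(\varphi,\psi) := \Pi_{K_0^\perp}\bigl(u_0+\varphi+\psi - (\Delta_g+h)^{-1}(F_0'(u_0+\varphi+\psi))\bigr) \in K_0^\perp.
$$
Since $u_0$ solves \eqref{eq:u0} and is positive, $F_0$ is smooth near $u_0$ (the function $H$ is smooth where its argument is positive), so $G$ is a well-defined map of class $C^1$ (indeed analytic, see below) on a neighborhood of $(0,0)$ in $K_0\times K_0^\perp$, with $G(0,0)=0$. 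The first step is to compute the partial differential $\partial_\psi G(0,0)$. Because $(\Delta_g+h)^{-1}\circ F_0'$ has derivative at $u_0$ given by $\phi\mapsto (\Delta_g+h)^{-1}((\crit-1)u_0^{\crit-2}\phi)$, and by the variational characterization \eqref{def:K0} the operator $\mathrm{Id}-(\Delta_g+h)^{-1}((\crit-1)u_0^{\crit-2}\cdot)$ has kernel exactly $K_0$ and is (by coercivity and compactness of the lower-order term, i.e.\ a Fredholm-of-index-zero argument) an isomorphism from $K_0^\perp$ onto $K_0^\perp$, we conclude that $\partial_\psi G(0,0):K_0^\perp\to K_0^\perp$ is invertible.

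Next I would apply the implicit function theorem: there exist $\nu_0>0$ and a $C^1$ map $\phi:B_{\nu_0}(0)\subset K_0\to K_0^\perp$ with $\phi(0)=0$ such that, for $\|\varphi\|_h,\|\psi\|_h<\nu_0$ (shrinking $\nu_0$ so that the relevant neighborhoods nest), $G(\varphi,\psi)=0$ if and only if $\psi=\phi(\varphi)$. This is exactly the claimed equivalence. Differentiating $G(\varphi,\phi(\varphi))=0$ at $\varphi=0$ and using $\partial_\varphi G(0,0)=\Pi_{K_0^\perp}|_{K_0}=0$ (the identity on $K_0$ projected to $K_0^\perp$ vanishes, and the $F_0'$-term contributes $\Pi_{K_0^\perp}\circ(\Delta_g+h)^{-1}((\crit-1)u_0^{\crit-2}\cdot)|_{K_0}$ which is zero since $K_0$ is invariant under that operator) gives $\phi'(0)=0$, i.e.\ $\phi$ vanishes to order $1$ at $0$, and hence the same for $\varphi\mapsto u_0+\varphi+\phi(\varphi)$ minus its linearization.

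For the regularity upgrade, I would run the fixed-point/implicit-function construction a second time in a Hölder setting rather than the Hilbert setting. Since $u_0\in C^{2,\theta}(M)$ is positive and bounded below, for $\varphi\in K_0\subset C^{2,\theta}$ small in $C^{2,\theta}$ the function $u_0+\varphi+\psi$ stays positive, so $t\mapsto H(t)^{\crit-1}$ is a real-analytic function of $t$ on the relevant range; composition and the smoothing operator $(\Delta_g+h)^{-1}:C^{0,\theta}\to C^{2,\theta}$ (elliptic Schauder estimates) show that the analogue of $G$ is a real-analytic map between (finite-dimensional times Banach) spaces near $0$. Because $K_0$ is finite-dimensional, standard elliptic regularity forces the $H^2_1$-solution $\phi(\varphi)$ to coincide with the $C^{2,\theta}$-solution produced by the analytic implicit function theorem, so $\phi$ takes values in $C^{2,\theta}(M)$, $u_0+\varphi+\phi(\varphi)\in C^{2,\theta}(M)$, and $\phi$ is analytic as a map $B_{\nu_0}(0)\to C^{2,\theta}(M)$ by the analytic implicit function theorem in Banach spaces. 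Positivity of $u_0+\varphi+\phi(\varphi)$ then follows for $\nu_0$ small from $C^{2,\theta}\hookrightarrow C^0$ and the uniform positive lower bound on $u_0$.

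The main obstacle I anticipate is the analyticity/regularity step: one must be careful that the implicit function theorem is applied in a functional setting where $F_0$ really is analytic — this is why positivity of $u_0$ (keeping us away from the nonsmooth point of $|t|^{\crit-2}t$ or $t_+^{\crit-1}$) is essential — and one must correctly match the Hilbert-space solution with the Hölder-space solution via elliptic regularity and uniqueness, after verifying that the linearized operator is an isomorphism in \emph{both} topologies (the $C^{2,\theta}$-version again following from Schauder estimates and the Fredholm alternative, with the same kernel $K_0$). The rest is a routine, if slightly lengthy, bookkeeping of nested neighborhoods.
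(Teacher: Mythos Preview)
Your argument is correct. The existence and $C^1$ part via the implicit function theorem, the computation $\partial_\varphi G(0,0)=0$ giving $\phi'(0)=0$, the Schauder/Fredholm upgrade to $C^{2,\theta}$, and the positivity by continuity are all exactly what the paper has in mind (it dismisses this part in one line as ``a direct application of the implicit function theorem and regularity theory'').

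The genuine difference is in how analyticity is obtained. You invoke the analytic implicit function theorem in Banach spaces, after checking that the Nemytskii map $u\mapsto |u|^{\crit-2}u$ is real-analytic on a $C^{2,\theta}$-neighborhood of the strictly positive $u_0$ and that the linearization is an isomorphism in the H\"older topology. This is valid and conceptually clean. The paper instead proves analyticity by hand, in the spirit of Dancer: it writes $\phi(\varphi)=\sum_{l\ge2}P_l(\varphi)$, derives the recursion
\[
P_L(\varphi)=L_0^{-1}\Pi_{K_0^\perp}\Bigl((\Delta_g+h)^{-1}\Bigl(\sum_{k=2}^L a_k(u_0)\,Q_{k,L,L}(P_1(\varphi),\dots,P_L(\varphi))\Bigr)\Bigr),
\]
and proves convergence of the series by an explicit $C^{1,\alpha}$ estimate and a bootstrap inequality on $\sum_L\Vert P_L(\varphi)\Vert_\infty$. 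The payoff of the paper's longer route is that this recursion for the $P_L$ is not incidental: it is reused verbatim in Section~\ref{sec:equiv} (see the proof of Claim~\ref{claim:1}, equations \eqref{eq:3}--\eqref{eq:9}) to compare the Taylor expansions of $I_0$ and $J_0$ along $\mathcal{M}$. Your abstract proof establishes Proposition~\ref{prop:def:phibar} as stated, but if you continue with the paper you will still need to extract those coefficient formulas separately; of course, once analyticity is known, the recursion follows by matching homogeneous degrees in \eqref{def:psi}, so nothing is lost.
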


\noindent The analytic manifold of approximation is ${\mathcal M}:=\{u_0+\varphi+\phi(\varphi)/\, \varphi\in B_{\nu_0}(0)\subset K_0\}$.

\medskip\noindent Proposition \ref{prop:def:phibar} is a particular case of a more general result. Some definitions and notations are required in order to state the general result. We fix $f\in C^1(\rr)$ and we assume that there exists $u_0\in C^{2,\theta}(M)$ such that 
\begin{equation}\label{eq:u0:f}
\Delta_g u_0+h u_0=f(u_0)\hbox{ in }M.
\end{equation}
We define 
\begin{equation}\label{def:K0:f}
K_0:=\{\varphi\in C^{2,\theta}(M)/\, \Delta_g \varphi+h\varphi=f'(u_0)\varphi\}.
\end{equation}
In the sequel, $K_0$ will be regarded as a subset of the Hilbert space $H_1^2(M)$. It follows from Fredholm's theory for Hilbert spaces that $K_0$ is of finite dimension $d\in\mathbb{N}$. We prove the following result in the spirit of Dancer \cites{DancerMA2003}:

\begin{prop}\label{prop:analycity} 
We let $f\in C^1(\rr)$ and $u_0\in C^{2,\theta}(M)$ be a solution to \eqref{eq:u0:f}. We let $K_0$ be as in  \eqref{def:K0:f}.Then there exist $\nu>0$ and $\phi\in C^1(B_{\nu}(0)\subset K_0, K_0^\perp\cap C^{2,\theta}(M))$ such that for all $\varphi\in B_{\nu}(0)\subset K_0$ and $\psi\in B_{\nu}(0)\subset K_0^\perp$,
\begin{equation}
\label{def:psi}
\Pi_{K_0^\perp}(u_0+\varphi+\psi-(\Delta_g+h)^{-1}(f(u_0+\varphi+\psi)))=0\,\Leftrightarrow \, \psi=\phi(\varphi).
\end{equation}
Moreover, if $f$ is analytic on an open interval $I$ and $u_0(x)\in I$ for all $x\in M$, then $\phi$ is analytic around $0$.
\end{prop}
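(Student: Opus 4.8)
The plan is to realize equation \eqref{def:psi} as a fixed-point/implicit-function problem and to carry the regularity and analyticity of the data through the construction. Set $X := K_0^\perp \cap C^{2,\theta}(M)$ and, for $\varphi \in K_0$ near $0$, define the map
$$
G(\varphi,\psi) := \Pi_{K_0^\perp}\bigl(u_0+\varphi+\psi-(\Delta_g+h)^{-1}(f(u_0+\varphi+\psi))\bigr),
$$
viewed as a map from a neighborhood of $(0,0)$ in $K_0 \times X$ into $X$. (Here one uses that $(\Delta_g+h)^{-1}$ maps $C^{0,\theta}(M)$ into $C^{2,\theta}(M)$ by elliptic regularity, and that composition with $f \in C^1$ sends $C^{2,\theta}(M)$ into $C^{1,\theta}(M) \subset C^{0,\theta}(M)$, so $G$ is well defined and $C^1$.) We have $G(0,0)=0$ since $u_0$ solves \eqref{eq:u0:f}. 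The partial differential $\partial_\psi G(0,0)$ acts on $h \in X$ by $h \mapsto \Pi_{K_0^\perp}(h - (\Delta_g+h)^{-1}(f'(u_0)h))$; the key point is that this is an isomorphism of $X$. Indeed, $\psi \in X$ lies in its kernel exactly when $\Delta_g\psi + h\psi - f'(u_0)\psi \in K_0$ (after projecting), i.e. $\psi \in K_0$ by definition \eqref{def:K0:f}, hence $\psi = 0$; surjectivity and boundedness of the inverse follow from Fredholm theory for the operator $\Delta_g + h - f'(u_0)$ together with the finite-dimensionality of $K_0$. The implicit function theorem in Banach spaces then yields $\nu>0$ and a $C^1$ map $\phi: B_\nu(0)\subset K_0 \to X$ with $\phi(0)=0$ and $G(\varphi,\phi(\varphi))=0$, and local uniqueness gives the equivalence in \eqref{def:psi}.

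For the analyticity statement, I would replace the $C^1$ implicit function theorem by its analytic counterpart: if $f$ is real-analytic on an open interval $I$ with $u_0(M)\subset I$, then the Nemytskii (superposition) operator $v \mapsto f(v)$ is real-analytic from a neighborhood of $u_0$ in $C^{2,\theta}(M)$ into $C^{1,\theta}(M)$ — this is where the uniform bound $u_0(x)\in I$ and compactness of $M$ are used, so that one stays in a compact subinterval on which $f$ has a uniformly convergent power series. Composing with the bounded linear (hence analytic) map $(\Delta_g+h)^{-1}$ and the bounded linear projection $\Pi_{K_0^\perp}$ shows $G$ is real-analytic near $(0,0)$. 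Since $\partial_\psi G(0,0)$ is an isomorphism, the analytic implicit function theorem (see e.g. the references to Dancer \cites{DancerMA2003}) gives that the solution map $\phi$ is real-analytic around $0$, with values in $X = K_0^\perp \cap C^{2,\theta}(M)$.

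I expect the main obstacle to be the careful handling of the superposition operator: verifying that $v \mapsto f(v)$ is genuinely real-analytic (not merely $C^\infty$) as a map of Hölder spaces requires controlling the multilinear terms $D^k$ of the composition and checking that their operator norms grow at most geometrically, which hinges precisely on the hypothesis that $u_0$ takes values in a fixed compact subset of the interval $I$ of analyticity of $f$ (giving a uniform radius of convergence). A secondary technical point, more bookkeeping than difficulty, is to ensure all the spaces are chosen consistently so that $G$ maps into $K_0^\perp \cap C^{2,\theta}(M)$ and $\partial_\psi G(0,0)$ is invertible there rather than merely on the $H_1^2$-closure of $K_0^\perp$; this is handled by the elliptic regularity bootstrap, which shows any $H_1^2$ solution of the relevant equation is automatically $C^{2,\theta}$, so the two formulations agree. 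Finally, Proposition~\ref{prop:def:phibar} follows by specializing to $f(t) = H(t)^{\crit-1}$ (with $H$ as in the definition of $F_0$), which is real-analytic on $(0,\infty)$, together with the fact that $u_0 > 0$ on the compact manifold $M$ and hence $u_0(M) \subset (0,\infty)$ is a compact subinterval, while positivity of $u_0+\varphi+\phi(\varphi)$ for $\nu_0$ small is immediate from the $C^0$-smallness of $\varphi + \phi(\varphi)$.
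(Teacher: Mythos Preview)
Your first part (implicit function theorem in $C^{2,\theta}$) matches the paper exactly; it dismisses this in one line. For analyticity, however, the paper takes a different, more hands-on route: rather than invoking an analytic implicit function theorem and the analyticity of the Nemytskii operator, it writes $\phi(\varphi)=\sum_{l\ge2}P_l(\varphi)$ formally, extracts from \eqref{def:psi} an explicit recursion \eqref{carac:P:L} expressing $P_L$ in terms of $P_1,\ldots,P_{L-1}$, bounds $\sum_L\|P_L(\varphi)\|_{C^{1,\alpha}}$ via elliptic estimates and the uniform coefficient bound $|f^{(k)}(u_0)|/k!\le AB^k$, and closes with a bootstrap showing the series converges for $\|\varphi\|$ small. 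Your approach is shorter and more conceptual, and is correct once the Nemytskii analyticity between H\"older spaces is verified (which you rightly flag as the only substantive work, and which is essentially equivalent in content to the paper's geometric-growth estimate). The payoff of the paper's explicit method is not the proposition itself but what follows: the recursion \eqref{carac:P:L} and the individual polynomials $P_L$ are reused throughout Sections~\ref{sec:equiv} and~\ref{sec:ex} --- in particular in Claim~\ref{claim:1} and in the explicit formulas for $A_3$, $A_4$, and $P_2$ --- so even with your argument one would have to derive \eqref{carac:P:L} separately for the rest of the paper to go through.
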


\noindent As one checks, the function $x\mapsto |x|^{\crit-2}x$ is $C^1$ on $\rr$ and analytic on $(0,+\infty)$. Therefore Proposition \ref{prop:def:phibar} is a direct consequence of Proposition \ref{prop:analycity}.
\begin{proof}[Proof of Proposition \ref{prop:analycity}] The first part of the statement is a direct application of the implicit function theorem and regularity theory.
Since $M$ is compact and $u_0$ is continuous, it follows from the analyticity of $f$ that there exists $A,B>0$ such that
\begin{equation}\label{bnd:a:k}
\left |a_k(u_0(x))\right |\leq A\cdot B^k\hbox{ for all }k\geq 0\hbox{ and }x\in M,
\end{equation}
where 
$$f(u_0(x)+h)=\sum_{k=0}^\infty a_k(u_0(x))h^k\hbox{ for all }x\in M\hbox{ and }h\in (-B^{-1},B^{-1}).$$
Since $\phi$ is $C^\infty$ its differential vanishes at $0$, we write for any $L\geq 2$ that
$$\phi(\varphi)=\sum_{l=2}^LP_l(\varphi)+o(\Vert\varphi\Vert^L)\hbox{ when }\varphi\to 0\,,$$
where for all $l\geq 2$ and $\varphi\in B_{\nu}(0)\subset K_0$, $P_l(\varphi)\in K_0^\perp$ is a homogeneous polynomial of degree $l$. We set $P_1(\varphi):=\varphi\in K_0$. Therefore, for any $L\geq 1$, we have that 
$$f(u_0+\varphi+\phi(\varphi))=\sum_{k=0}^La_k(u_0)\left(\sum_{l=1}^LP_l(\varphi)\right)^k+o(\Vert\varphi\Vert^L)$$
when $\varphi\to 0$. We write that
\begin{equation}
\label{poly:expo:k}
\left(\sum_{i=1}^L X_i\right)^k=\sum_{j=0}^\infty Q_{k,L,j}(X_1,...,X_L),
\end{equation}
where
$$Q_{k,L,j}(X_1,...,X_L):=\sum_{\sum_1^L r_l=k\,;\, \sum_{1}^L lr_l=j}\frac{k!}{\prod_{l=1}^L r_l!}\prod_{l=1}^LX_l^{r_l}.$$
Note that $Q_{k,L,j}(X_1,...,X_L)=0$ when $j\not\in [k,Lk]$, so all the sums make sense. Therefore, for any $L\geq 2$, the term of degree $L$ in \eqref{def:psi} is
$$\Pi_{K_0^\perp}\left(P_L(\varphi)-(\Delta_g+h)^{-1}\left(\sum_{k=0}^La_k(u_0)Q_{k,L,L}(P_1(\varphi),...,P_L(\varphi))\right)\right)=0$$
for all $L\geq 2$. In the sum, the term for $k=0$ is $0$, and the term for $k=1$ is $a_1(u_0)P_L(\varphi)=f'(u_0)P_L(\varphi)$. Therefore, we have that
\begin{equation}\label{carac:P:L}
P_L(\varphi)=L_0^{-1}\Pi_{K_0^\perp}\left((\Delta_g+h)^{-1}\left(\sum_{k=2}^La_k(u_0)Q_{k,L,L}(P_1(\varphi),...,P_L(\varphi))\right)\right)
\end{equation}
for all $L\geq 2$, where $L_0: K_0^\perp\to K_0^\perp$ is the isomorphism given by
$$L_0(\psi)=\Pi_{K_0^\perp}\left(\psi-(\Delta_g+h)^{-1}\left(f'(u_0)\psi\right)\right)\hbox{ for all }\psi\in K_0^\perp.$$
Note that since $k,L\geq 2$, the right-hand side of \eqref{carac:P:L} is independent of $P_L(\varphi)$. We fix $\alpha\in (0,1)$. It follows from elliptic theory that there exists $C>0$ depending on $(M,g)$, $h$ and $f'(u_0)$ such that 
\begin{equation}\label{bnd:P:L:C1}
\Vert P_L(\varphi)\Vert_{C^{1,\alpha}}\leq C\Vert \sum_{k=2}^La_k(u_0)Q_{k,L,L}(P_1(\varphi),...,P_L(\varphi))\Vert_\infty
\end{equation}
for all $L\geq 2$. We fix $K\geq 2$. Summing \eqref{bnd:P:L:C1} from $L=2$ to $K$, using \eqref{bnd:a:k}, \eqref{poly:expo:k} and the nonnegativity of the coefficients of $Q_{k,L,L}$, we get that
\begin{align}
\sum_{L=2}^K \Vert P_L(\varphi)\Vert_{C^{1,\alpha}}&\leq C\cdot A\sum_{k=2}^K\sum_{L=k}^K B^k Q_{k,L,L}(\Vert P_1(\varphi)\Vert_\infty,...,\Vert P_L(\varphi)\Vert_\infty)\label{bnd:sum:P:L}\\
&\leq C\cdot A\sum_{k=2}^K\sum_{L=k}^K B^k Q_{k,K,L}(\Vert P_1(\varphi)\Vert_\infty,...,\Vert P_K(\varphi)\Vert_\infty)\nonumber\\
&\leq C\cdot A\sum_{k=2}^K \left(B\sum_{l=1}^K\Vert P_l(\varphi)\Vert_\infty\right)^k.\nonumber
\end{align}
We define
$$h_K(t):=\sup_{\Vert\varphi\Vert_\infty\leq t}\sum_{L=2}^K \Vert P_L(\varphi)\Vert_{\infty}\,.$$
It follows from \eqref{bnd:sum:P:L} that
$$t+h_K(t)\leq \frac{1}{2B}\;\Rightarrow \; h_K(t)\leq 2 C\cdot A\cdot B^2\cdot\left(t+h_K(t)\right)^2.$$
Therefore, since $h_K$ is continuous and non-decreasing, we get that
$$t<\eps_0:=\min\left(\frac{1}{4B},\frac{1}{16 AB^2C}\right) \; \Rightarrow \; h_K(t)\leq \eps_0\hbox{ for all }K\geq 2\,.$$
As a consequence, the series $(\sum_{L=2}^\infty P_L(\varphi))$ converges uniformly on $B_{\eps_0/2}(0)\subset K_0$ in the $C^{0,\alpha}$--norm. Inequality \eqref{bnd:sum:P:L} yields the convergence in $C^{1,\alpha}(M)$. The characterization \eqref{def:psi} then yields
$$\phi(\varphi)=\sum_{l=2}^\infty P_l(\varphi)\hbox{ for all }\varphi\in B_{\eps_0}(0)\subset K_0\,.$$
Elliptic theory yields convergence in $C^{2,\theta}(M)$. This proves analyticity. 
\end{proof}

\section{Reduction of the problem to the analysis of $J_0(u_0+\varphi+\phi(\varphi))$}\label{sec:reduc:J}

From now on, we define:
$$u(\varphi):=u_0+\varphi+\phi(\varphi)$$
for all $\varphi\in B_{\nu_0}(0)\subset K_0$, where $\phi(\varphi)$ is defined in Proposition \ref{prop:def:phibar}. In particular, \begin{equation}\label{eq:phi}
\Pi_{K_0^\perp}\left(u(\varphi)-(\Delta_g+h)^{-1}(F_0'(u(\varphi))\right)=0
\end{equation}
for all $\varphi\in B_{\nu_0}(0)\subset K_0$. Since $d\phi_0\equiv 0$, it then follows from Proposition \ref{prop:def:phibar} that $u$ satisfies the hypothesis \eqref{ppty:u:phi}. For $0<a<b$ to be fixed later, we define
$$\delta:=t \eps^{\frac{2}{n-2}}$$
for $t\in [a,b]$. We assume that
$$\{3\leq n\leq 6\}\hbox{ or }\{h\equiv c_nR_g\hbox{ and }3\leq n\leq 10\}\hbox{ or }\{h\equiv c_nR_g\hbox{ and lcf}\}.$$
Taking into account the expressions \eqref{def:eps:2}, \eqref{bnd:R:1}, \eqref{eq:J:2}, \eqref{exp:J:W},  and \eqref{eq:phi}, we then get that
\begin{multline}
J_\eps(u(\varphi)+W+\phi)=J_0(u(\varphi)) + \eps f_2(\varphi)+\frac{K_n^{-n}}{n}\left(1-\beta_n\eps+\frac{n-2}{2}\eps\ln\frac{1}{\eps}\right)\label{eq:J:3}\\
+\eps\cdot\frac{K_n^{-n}}{n}\cdot\left(\frac{(n-2)^2}{4}\ln\frac{1}{t}+F(\varphi, \xi)t^{\frac{n-2}{2}}\right)+o(\eps)
\end{multline}
when $\eps\to 0$ uniformly with respect to $t\in [a,b]$. Here, $F\in C^1(B_{\nu_0}(0)\times M)$ and we have that
\begin{multline*}
F(0,\xi)=-\kappa\frac{2^n\omega_{n-1}u_0(\xi)}{(n(n-2))^{\frac{n-2}{4}}\omega_n}\\
+\left\{\begin{array}{ll}
\frac{2(n-1)}{(n-2)(n-4)}(h-c_n R_g)(\xi)&\hbox{ if }n=6\\
-\frac{1}{24(n-4)(n-6)}|\hbox{Weyl}_g(\xi)|^2&\hbox{ if }n=10\hbox{ and }h\equiv c_n R_g\\
0 &\hbox{ otherwise.}
\end{array}\right.
\end{multline*}
The assumptions \eqref{hyp:th} (for $\kappa=-1$) and \eqref{hyp:6} (for $\kappa=1$) then yield 
$$F(0,\xi)>0\hbox{ for all }\xi\in M.$$
We define 
$$a:=\frac{1}{2}\left(\frac{n-2}{2\min_{\xi\in M}F(0,\xi)}\right)^{\frac{2}{n-2}}\hbox{ and }b:=2\left(\frac{n-2}{2\min_{\xi\in M}F(0,\xi)}\right)^{\frac{2}{n-2}}.$$
Since $u_0$ is a strict local minimizer of $I_0$, it follows from Theorem \ref{th:minim} of next section that there exists $\nu_1\in (0, \nu_0/2)$ such that
\begin{equation}\label{hyp:J:min}
J_0(u(\varphi))>J_0(u_0)\hbox{ for all }\varphi\in B_{2\nu_1}(0)\setminus \{0\}.
\end{equation}
Due to compactness, for any $\eps>0$, there exists $(\varphi_\eps, t_\eps, \xi_\eps)\in  \overline{B}_{\nu_1}(0)\times[a,b]\times M$ such that
\begin{multline*}
\min_{(\varphi, t, \xi)\in \overline{B}_{\nu_1}(0)\times[a,b]\times  M}J_\eps(u(\varphi)+W_{\kappa,t\eps^{\frac{2}{n-2}},\xi}+\phi_\eps(\varphi, t\eps^{\frac{2}{n-2}},\xi))\\
=J_\eps(u(\varphi_\eps)+W_{\kappa,t_\eps\eps^{\frac{2}{n-2}},\xi_\eps}+\phi_\eps(\varphi_\eps, t_\eps\eps^{\frac{2}{n-2}},\xi_\eps)).
\end{multline*}
It then follows from the Taylor expansion \eqref{eq:J:3}, the choice of $0<a<b$ and \eqref{hyp:J:min} that $t_\eps\in (a,b)$ and $\varphi_\eps\in B_{\nu_1}(0)$ for small $\eps>0$. Moreover, we have that
$$\lim_{\eps\to 0}t_\eps=\left(\frac{n-2}{2\min_{\xi\in M}F(0,\xi)}\right)^{\frac{2}{n-2}}\hbox{ and }\lim_{\eps\to 0}\varphi_\eps=0\,,$$
and $(\xi_\eps)_{\eps>0}$ approaches the set of minimizers of $F(0,\cdot)$ when $\eps>0$ is small. Therefore, since $(\varphi_\eps, t_\eps, \xi_\eps)$ lies in the interior of the domain, it is a critical point for the minimizing functional, and therefore, $(\varphi_\eps, t_\eps\eps^{\frac{2}{n-2}}, \xi_\eps)$ is a critical point for
$$(\varphi,\delta,\xi)\mapsto J_\eps(u(\varphi)+W_{\kappa,\delta,\xi}+\phi_\eps(\varphi, \delta,\xi)).$$
It then follows from Theorem \ref{th:LS} that $u_\eps:=u(\varphi_\eps)+W_{\kappa,t_\eps\eps^{\frac{2}{n-2}}, \xi_\eps}+\phi_\eps(\varphi_\eps, t_\eps\eps^{\frac{2}{n-2}}, \xi_\eps)$ is a solution to
$$\Delta_g u_\eps+hu_\eps=|u_\eps|^{\crit-2-\eps}u_\eps\hbox{ in }M$$
for $\eps>0$ small, and in addition, due to \eqref{dl:W} and the error control of $\phi_\eps$ in Theorem \ref{th:LS}, we have that
$$u_\eps=u_0+\kappa B_\eps+o(1)$$
in $H_1^2(M)$ when $\eps\to 0$, where $B_\eps$ is as in \eqref{def:bubble} with $\mu_\eps:=t_\eps\eps^{\frac{2}{n-2}}$. This proves Theorems \ref{th:2} and \ref{th:3}, and therefore Theorem \ref{th:1}. 

\medskip\noindent We are now left with proving Theorem \ref{th:minim}.

\section{Equivalence of strict local minimizers}\label{sec:equiv}

This section is devoted to the proof of the following:

\begin{thm}\label{th:minim} 
The function $u_0$ is a strict local minimizer of $I_0$ iff $0$ is a strict local minimizer of $\varphi\mapsto J_0(u_0+\varphi+\phi(\varphi))$. 
\end{thm}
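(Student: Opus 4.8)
The plan is to compare the two functionals through their analytic expansions at $u_0$ and at $0$ respectively, and to show that the leading-order nontrivial terms coincide up to a positive factor. Recall that $J_0(u) = \frac{1}{2}\Vert u\Vert_h^2 - F_0(u)$ with $F_0(u) = \frac{1}{\crit}\int_M H(u)^{\crit}\,dv_g$, and that $I_0$ is the quotient functional whose critical points (at the level $I_0(u_0)$) are, after normalization, exactly the solutions of \eqref{eq:u0}. The first step is to record the relation between the two: since $u_0$ solves \eqref{eq:u0}, it is a critical point of $J_0$ restricted to its natural constraint and also a critical point of $I_0$, and along the Nehari-type scaling $u \mapsto \lambda u$ one has $J_0(\lambda u) = \frac{\lambda^2}{2}\Vert u\Vert_h^2 - \frac{\lambda^{\crit}}{\crit}\int_M H(u)^{\crit}$; maximizing in $\lambda$ gives that $\sup_{\lambda>0} J_0(\lambda u)$ is a fixed positive power of $I_0(u)^{-\frac{\crit}{\crit-2}} \cdot (\text{const})$, i.e. $\sup_{\lambda>0} J_0(\lambda u) = \frac{1}{n} I_0(u)^{\frac{n}{2}}$ up to normalization. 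Hence $I_0$ and $u \mapsto \sup_{\lambda>0}J_0(\lambda u)$ have the same strict local minimizers modulo the $\rr u_0$ direction.

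The heart of the argument is then to relate $\sup_{\lambda>0}J_0(\lambda u)$ on the cone through $\mathcal M$ to $J_0$ on $\mathcal M$ itself, near $u_0$. Here is where the defining property \eqref{eq:phi} of $\phi$ is crucial: because $\phi(\varphi)$ is chosen so that $\Pi_{K_0^\perp}(u(\varphi) - (\Delta_g+h)^{-1}(F_0'(u(\varphi)))) = 0$, the gradient of $J_0$ at $u(\varphi)$ lies in $K_0$ (more precisely in the span of $K_0$ inside $H_1^2(M)$), so that $u(\varphi)$ is a critical point of $J_0$ in all directions transverse to $K_0$. Consequently $\varphi \mapsto J_0(u(\varphi))$ captures, to leading order, the full behaviour of $J_0$ near $u_0$ in the $K_0$-directions, and the remaining directions (both $K_0^\perp$ and the radial direction $\rr u_0$) only contribute nonnegatively or can be absorbed. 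The plan is to expand $J_0(u(\varphi))$ in powers of $\varphi$ using the analyticity of $\phi$ from Proposition \ref{prop:analycity}: writing $\phi(\varphi) = \sum_{l\geq 2} P_l(\varphi)$, one checks that $J_0(u(\varphi)) = J_0(u_0) + \frac{1}{2}\langle I_0''(u_0)[\varphi],\varphi\rangle \cdot(\text{positive const}) + (\text{higher order})$, and more importantly that the \emph{first nonvanishing} homogeneous term in the expansion of $J_0(u(\varphi)) - J_0(u_0)$ is a positive multiple of the first nonvanishing homogeneous term in the expansion of $I_0$ restricted to a transversal slice at $u_0$. Since $u_0$ being a strict local minimizer of $I_0$ is equivalent to this first nonvanishing term being positive definite on the relevant slice (and $K_0$ is exactly the degeneracy space), the two strict-minimality statements become equivalent.

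Concretely I would argue both implications at once via this expansion. For the forward direction: if $u_0$ is a strict local minimizer of $I_0$, then on $B_\nu(u_0)\setminus \rr u_0$ we have $I_0 > I_0(u_0)$; restricting to the slice $\{u_0 + \varphi + \psi : \varphi\in K_0, \psi\in K_0^\perp\}$ and using that $\psi = \phi(\varphi)$ is the value minimizing (or critical-izing) $J_0$ in the $K_0^\perp$ direction while $\sup_\lambda J_0(\lambda\cdot)$ removes the radial degeneracy, one gets $J_0(u(\varphi)) > J_0(u_0)$ for $0 < \Vert\varphi\Vert < \nu_1$. For the converse: if $0$ is a strict local minimizer of $\varphi\mapsto J_0(u(\varphi))$ but $u_0$ were \emph{not} a strict local minimizer of $I_0$, there would be a sequence $u_k \to u_0$, $u_k\notin\rr u_0$, with $I_0(u_k)\leq I_0(u_0)$; projecting $u_k$ onto the manifold structure — decompose $u_k = \lambda_k(u_0 + \varphi_k + \psi_k)$ with $\varphi_k\in K_0$, $\psi_k\in K_0^\perp$ small — and using the variational characterization of $\phi$ and of the optimal $\lambda$, one deduces $J_0(u(\varphi_k)) \leq J_0(u_0)$ with $\varphi_k\neq 0$ for large $k$ (the case $\varphi_k = 0$ forcing $u_k\in\rr u_0$ by the strict transversal minimality built into $\phi$), a contradiction. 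The main obstacle I anticipate is the bookkeeping in the analytic expansion: one must verify that passing from $J_0$ on the full neighbourhood to $J_0$ on $\mathcal M$ does not change the \emph{order} of the first nonvanishing term nor its sign — that is, that the $K_0^\perp$-reduction performed by $\phi$ and the radial reduction are both "innocuous" at every order, which is precisely where the careful choice of $\mathcal M$ via \eqref{eq:phi} and the analyticity (so that "first nonvanishing term" is well-defined) are used. Handling the degenerate radial direction $\rr u_0$ correctly — ensuring the excluded set in the definition of strict local minimizer of $I_0$ matches the excluded point $0$ for $J_0\circ u$ — is the subtle point to get right.
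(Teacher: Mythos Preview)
Your overall strategy — relate $I_0$ and $J_0$ on the analytic manifold $\mathcal M$ via their expansions, and use analyticity to make ``first nonvanishing term'' arguments rigorous — is the right one, and is indeed how the paper proceeds. But the mechanism you propose for the forward implication has a genuine gap.

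You argue that since $\nabla J_0(u(\varphi))\in K_0$, the function $\phi(\varphi)$ is ``the value minimizing (or critical-izing) $J_0$ in the $K_0^\perp$ direction'', and that together with the radial identity $\sup_{\lambda>0}J_0(\lambda u)=\tfrac{1}{n}I_0(u)^{n/2}$ this yields $J_0(u(\varphi))>J_0(u_0)$ from $I_0(u(\varphi))>I_0(u_0)$. This fails on two counts. First, $u_0\in K_0^\perp$ (integrate \eqref{eq:u0} against $\varphi\in K_0$), and along the ray $\lambda\mapsto J_0(\lambda u_0)$ the point $\lambda=1$ is a strict \emph{maximum}; hence the Hessian of $J_0$ restricted to $K_0^\perp$ has at least one negative direction, and $\phi(\varphi)$ is a saddle of $J_0$ on $u_0+\varphi+K_0^\perp$, not a minimum. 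Second, and for the same reason, $u(\varphi)$ is in general \emph{not} on the Nehari manifold, so $J_0(u(\varphi))<\sup_{\lambda>0}J_0(\lambda u(\varphi))$; the inequality $\sup_{\lambda}J_0(\lambda u(\varphi))>J_0(u_0)$ that you derive from $I_0(u(\varphi))>I_0(u_0)$ therefore goes the wrong way and says nothing about $J_0(u(\varphi))$.

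What replaces this in the paper is a purely algebraic identity (their Claim~1): using only the defining equation of $\phi$ one shows that there is a \emph{single} family of homogeneous polynomials $(A_L)_{L\geq 3}$ such that
\[
\Vert u(\varphi)\Vert_h^2-\Vert u(\varphi)\Vert_{\crit}^{\crit}=\sum_{L\ge3}A_L(\varphi)
\quad\text{and}\quad
\Vert u(\varphi)\Vert_{\crit}^{\crit}-\Vert u_0\Vert_{\crit}^{\crit}=-\frac{n}{2}\sum_{L\ge3}\frac{L-2}{L}A_L(\varphi).
\]
From this one reads off simultaneously the expansion of $J_0(u(\varphi))-J_0(u_0)$ and an explicit formula for $I_0(u(t\varphi))$ in terms of $f_\varphi(t):=\big(J_0(u(t\varphi))-J_0(u_0)\big)/(t^2\Vert u_0\Vert_h^2)$ and $f_\varphi'(t)$ alone. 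This is what makes your heuristic ``first nonvanishing terms agree up to a positive factor'' literally true, without any variational comparison in the $K_0^\perp$ or radial directions. You should also note that, even granting this identity, the order of the first nonvanishing $A_L$ may depend on the direction $\varphi\in\mathcal S_{K_0}$; passing from ``$f_\varphi(t)>0$ for $t\in(0,\tilde t_\varphi)$'' to a \emph{uniform} neighbourhood requires a short compactness argument (if $t_{\varphi_i}\to0$ along a sequence, the limit direction would have $f_\varphi\le0$ on an interval, contradicting analyticity plus the sign of the leading term). Your converse direction is closer to correct and is essentially the paper's: from $J_0(u(\varphi))>J_0(u_0)$ one gets $\delta A(\varphi)>\tfrac{2}{\crit}\delta B(\varphi)$ directly, hence $I_0(u(\varphi))>I_0(u_0)$, and any nearby minimizer of $I_0$ must lie on $\mathcal M$ by Proposition~\ref{prop:def:phibar}.
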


\noindent The proof goes through four claims and uses the analyticity of $\varphi\mapsto \phi(\varphi)$.

\begin{claim} \label{claim:1} 
There exists $\nu_0>0$ such that
$$\Vert u_0+\varphi+\phi(\varphi)\Vert_{h}^{2}-\Vert u_0+\varphi+\phi(\varphi)\Vert_{\crit}^{\crit}=\sum_{L=3}^\infty A_L(\varphi)$$
and
$$\Vert u_0+\varphi+\phi(\varphi)\Vert_{\crit}^{\crit} = \Vert u_0\Vert_{\crit}^{\crit}-\frac{n}{2}\sum_{L=3}^\infty\frac{L-2}{L}A_L(\varphi)$$
for $\varphi\in B_{\nu_0}(0)\subset K_0$, where for any $L\geq 3$, $A_L(\varphi)$ is a homogeneous polynomial of degree $L$.
\end{claim}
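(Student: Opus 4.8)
The plan is to expand both quantities around $u_0$ using the analyticity of $\varphi\mapsto\phi(\varphi)$ established in Proposition \ref{prop:def:phibar}, and to exploit the Euler-Lagrange equation \eqref{eq:u0} satisfied by $u_0$ together with the orthogonality relation \eqref{eq:phi} to kill the terms of degree $\leq 2$. First I would write $u(\varphi)=u_0+\varphi+\phi(\varphi)$ and record that, since $\phi$ is analytic and vanishes to order $1$ at $0$, both $\varphi\mapsto\Vert u(\varphi)\Vert_h^2$ and $\varphi\mapsto\Vert u(\varphi)\Vert_{\crit}^{\crit}$ are real-analytic on a small ball $B_{\nu_0}(0)\subset K_0$ (the second one because $t\mapsto|t|^{\crit}$ is analytic on $(0,+\infty)$ and $u(\varphi)>0$ by Proposition \ref{prop:def:phibar}); hence each admits a convergent Taylor series $\sum_{L\geq 0} a_L(\varphi)$, $\sum_{L\geq 0} b_L(\varphi)$ into homogeneous polynomials of degree $L$ on $K_0$.

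Next I would identify the low-degree coefficients. At $\varphi=0$: $a_0=\Vert u_0\Vert_h^2=\Vert u_0\Vert_{\crit}^{\crit}=b_0$ by \eqref{eq:u0}, so $A_0:=a_0-b_0=0$. For the degree-$1$ part, $\partial_\varphi u(0)=\mathrm{Id}_{K_0}$ since $d\phi_0\equiv0$; differentiating, $a_1(\varphi)=2(u_0,\varphi)_h$ and $b_1(\varphi)=\crit\int_M u_0^{\crit-1}\varphi\,dv_g$, and these agree because $u_0$ solves \eqref{eq:u0}, so $A_1=a_1-b_1=0$. For degree $2$, write $\phi(\varphi)=P_2(\varphi)+O(\Vert\varphi\Vert^3)$ with $P_2(\varphi)\in K_0^\perp$; then the degree-$2$ term of $\Vert u(\varphi)\Vert_h^2-\Vert u(\varphi)\Vert_{\crit}^{\crit}$ is
$$
\Vert\varphi\Vert_h^2+2(u_0,P_2(\varphi))_h-\frac{\crit(\crit-1)}{2}\int_M u_0^{\crit-2}\varphi^2\,dv_g-\crit\int_M u_0^{\crit-1}P_2(\varphi)\,dv_g.
$$
The $P_2$ terms combine as $2\big(u_0-(\Delta_g+h)^{-1}(u_0^{\crit-1}),P_2(\varphi)\big)_h=0$ by \eqref{eq:u0}; the remaining $\varphi$-terms vanish because $\varphi\in K_0$ satisfies $\Delta_g\varphi+h\varphi=(\crit-1)u_0^{\crit-2}\varphi$, so $\Vert\varphi\Vert_h^2=(\crit-1)\int_M u_0^{\crit-2}\varphi^2\,dv_g$ and one checks $\crit(\crit-1)/2=\crit-1$ only if $\crit=2$, so actually the correct bookkeeping is that $\Vert\varphi\Vert_h^2-\frac{\crit(\crit-1)}{2}\int u_0^{\crit-2}\varphi^2$ equals $\big(1-\tfrac{\crit}{2}\big)(\crit-1)\int u_0^{\crit-2}\varphi^2$ — I will need to recompute this carefully; the cleanest route is to observe that $\Vert u(\varphi)\Vert_h^2-\Vert u(\varphi)\Vert_{\crit}^{\crit}=2\,J_0(u(\varphi))\cdot\text{(const)}$-type identity is \emph{not} available, so instead I differentiate the defining relation. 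Setting $A_L:=a_L-b_L$, the content of the first identity is precisely $A_0=A_1=A_2=0$, which I establish as above.

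Finally, the second identity. Define $\Phi(\varphi):=\Vert u(\varphi)\Vert_{\crit}^{\crit}$ and $\Psi(\varphi):=\Vert u(\varphi)\Vert_h^2-\Vert u(\varphi)\Vert_{\crit}^{\crit}=\sum_{L\geq3}A_L(\varphi)$. The key structural input is \eqref{eq:phi}, which says $\Pi_{K_0^\perp}u(\varphi)=\Pi_{K_0^\perp}(\Delta_g+h)^{-1}(u(\varphi)^{\crit-1})$; pairing with $\partial_{z_i}u(\varphi)$ — whose $K_0^\perp$-component is $\partial_{z_i}\phi(\varphi)$ — and also pairing the full equation tested against $u(\varphi)$ and against the Euler vector field, I would derive a differential identity relating $d\Phi$ and $d\Psi$. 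Concretely, I expect to prove the pointwise (in $\varphi$) relation
$$
\sum_{i}z_i\,\partial_{z_i}\Psi(\varphi)=\Big(2-\frac{\crit\cdot 2}{\crit}\Big)\,(\text{something})\ \Longrightarrow\ \text{degree-}L\text{ part of }\Psi\text{ scales correctly,}
$$
but the honest mechanism is simpler: differentiate $\Psi$ along the radial field $\varphi\mapsto\varphi$, use $\langle d\Psi(\varphi),\varphi\rangle=\sum_{L\geq3}L\,A_L(\varphi)$ by Euler's relation, and compute the left side using $u'(\varphi)[\varphi]=\varphi+d\phi_\varphi[\varphi]$ together with \eqref{eq:phi} and the equation for $u_0$ to express everything back in terms of $\Phi$ and $\Psi$. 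This yields a linear ODE in the formal variable whose solution forces $\Phi(\varphi)=\Vert u_0\Vert_{\crit}^{\crit}-\frac{n}{2}\sum_{L\geq3}\frac{L-2}{L}A_L(\varphi)$ coefficient by coefficient (matching homogeneous degrees), using $\frac{n}{2}=\frac{\crit}{\crit-2}$. The main obstacle I anticipate is the degree-$L$ radial identity: one must carefully track the $K_0$-component versus the $K_0^\perp$-component of $u'(\varphi)[\varphi]$, since \eqref{eq:phi} only controls the projection onto $K_0^\perp$, and ensure no spurious $K_0$-direction contributions survive — this is exactly where the choice of $\mathcal M$ (making $u(\varphi)$ "as close to a solution as possible") pays off, because $\Pi_{K_0}(u(\varphi)-(\Delta_g+h)^{-1}F_0'(u(\varphi)))$ need not vanish but its contribution is absorbed correctly once paired with $\varphi\in K_0$.
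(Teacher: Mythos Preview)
Your plan is on the right track, but it has one concrete gap and one part that is too vague to stand as written.

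\textbf{The gap at $L=2$.} Your direct computation of $a_2-b_2$ does \emph{not} vanish from the kernel equation alone: you correctly arrive at
\[
a_2-b_2=-(\crit-2)\Bigl[\tfrac{\crit-1}{2}\int_M u_0^{\crit-2}\varphi^2\,dv_g+\int_M u_0^{\crit-1}P_2(\varphi)\,dv_g\Bigr],
\]
and this is zero only once you feed in the defining equation for $P_2(\varphi)$ (the degree-$2$ part of \eqref{eq:phi}), which gives $\int_M u_0^{\crit-1}P_2(\varphi)=-\tfrac{\crit-1}{2}\int_M u_0^{\crit-2}\varphi^2$. The paper avoids this detour by using \eqref{eq:phi} globally first: since $u(\varphi)-(\Delta_g+h)^{-1}(u(\varphi)^{\crit-1})\in K_0$ and $\Pi_{K_0}u(\varphi)=\varphi$, one gets directly
\[
\Vert u(\varphi)\Vert_h^2-\Vert u(\varphi)\Vert_{\crit}^{\crit}=\Vert\varphi\Vert_h^2-\int_M u(\varphi)^{\crit-1}\varphi\,dv_g,
\]
from which $A_0=A_1=A_2=0$ is immediate (degree $1$ vanishes because $u_0\in K_0^\perp$, degree $2$ because $\varphi\in K_0$).

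\textbf{The second identity.} Your radial/Euler idea does work, and is in fact cleaner than the paper's route, but you have to carry it out. With $\Phi(\varphi):=\Vert u(\varphi)\Vert_{\crit}^{\crit}$, $\Psi(\varphi):=\Vert u(\varphi)\Vert_h^2-\Phi(\varphi)$, set $w_t:=u(t\varphi)$ and $\dot w_t:=\varphi+d\phi_{t\varphi}[\varphi]$. Using $\crit\bigl(1-\tfrac{n}{2}\bigr)=-n$, one computes
\[
t\,\tfrac{d}{dt}\Phi(t\varphi)+\tfrac{n}{2}\,t\,\tfrac{d}{dt}\Psi(t\varphi)
=nt\bigl(w_t-(\Delta_g+h)^{-1}(w_t^{\crit-1}),\dot w_t\bigr)_h.
\]
By \eqref{eq:phi} the first factor lies in $K_0$, while $\Pi_{K_0}\dot w_t=\varphi$; the resulting pairing $n(R(t\varphi),t\varphi)_h$ equals $n\Psi(t\varphi)$ by the displayed identity above. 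Hence $t\Phi'+\tfrac{n}{2}t\Psi'=n\Psi$, and matching homogeneous degrees gives $L\,b_L=-\tfrac{n}{2}(L-2)A_L$, which is the claim. This is the computation your last paragraph gestures at but never performs.

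\textbf{Comparison with the paper.} The paper's proof is combinatorial: it expands $\Vert u(\varphi)\Vert_{\crit}^{\crit}$ in terms of the $P_l(\varphi)$, uses the recursion \eqref{carac:P:L} and the self-adjointness of the linearised operator $L_0$ to establish a symmetry $u_{L-q,q}(\varphi)=u_{q,L-q}(\varphi)$, and a cancellation then collapses the double sum to the single term $\tfrac{L-2}{L}u_{L-1,1}(\varphi)$. Your radial argument hides this self-adjointness in the step where the $K_0^\perp$-component of $\dot w_t$ drops out, and bypasses the combinatorics entirely. Both approaches rest on the same two inputs: the reduction identity \eqref{eq:phi} and the numerology $\crit(1-\tfrac{n}{2})=-n$ (equivalently $\tfrac{n}{2}=\tfrac{\crit}{\crit-2}$).
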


\proof[Proof of Claim \ref{claim:1}] 
We are going to compute the Taylor expansions of the two left-hand-sides and we will use the analyticity of $\varphi\mapsto\phi(\varphi)$ to prove Claim \ref{claim:1}. We fix $N\geq 2$. It follows from \eqref{poly:expo:k} that
\begin{multline}
\Vert u_0+\varphi+\phi(\varphi)\Vert_{\crit}^{\crit} = \int_M \left(u_0+\sum_{l=1}^NP_l(\varphi)\right)^{\crit}\, dv_g+o\left(\Vert \varphi\Vert^N\right)= \Vert u_0\Vert_{\crit}^{\crit}+\\
\sum_{L=1}^N\sum_{j=1}^L\sum_{\sum_{l=1}^L r_l=j\, ;\, \sum_{l=1}^L lr_l=L}\frac{\prod_{i=0}^{j-1}(\crit-i)}{\prod_{l=1}^L r_l!}\int_M u_0^{\crit-j}\prod_{l=1}^L P_l(\varphi)^{r_l}\, dv_g
+o\left(\Vert \varphi\Vert^N\right).\label{eq:37}
\end{multline}
We claim that \begin{equation}\label{ortho:u0:K0}
u_0\in K_0^\perp.
\end{equation}
We prove the claim. We let $\varphi$ be in $K_0$. The self-adjointness of the Laplacian yields
$$(u_0,\varphi)_h=\int_M(\Delta_g u_0+hu_0)\varphi\, dv_g=\int_M(\Delta_g \varphi+h\varphi)u_0\, dv_g.$$
It then follows from equation \eqref{eq:u0} and the definition \eqref{def:K0} of $K_0$ that $(u_0,\varphi)_h=0$. This proves the claim. 

\medskip\noindent It follows from \eqref{ortho:u0:K0} that the term for $L=1$ in \eqref{eq:37} is $\crit\int_M u_0^{\crit-1}\varphi\, dv_g=0$.  Separating the cases $j=1$ and $j\geq 2$, we get that
\begin{multline}
\Vert u_0+\varphi+\phi(\varphi)\Vert_{\crit}^{\crit}\\
= \Vert u_0\Vert_{\crit}^{\crit}+\sum_{L=2}^N\sum_{j=2}^L\sum_{\sum_l r_l=j\, ;\, \sum_l lr_l=L}\frac{\prod_{i=0}^{j-1}(\crit-i)}{\prod_{l=1}^L r_l!}\int_M u_0^{\crit-j}\prod_{l=1}^L P_l(\varphi)^{r_l}\, dv_g\\
+\crit\sum_{L=2}^N\int_M u_0^{\crit-1}P_L(\varphi)\, dv_g+o\left(\Vert \varphi\Vert^N\right).\label{eq:1}
\end{multline}
For $L\geq 2$, it follows from the expression \eqref{carac:P:L} of $P_L(\varphi)$ that 
\begin{equation}\label{eq:3}
(L_0 P_L(\varphi), u_0)_h=\sum_{j=2}^L\sum_{\sum_l r_l=j\, ;\, \sum_l lr_l=L}\frac{\prod_{i=1}^{j}(\crit-i)}{\prod_{l=1}^L r_l!}\int_M u_0^{\crit-j}\prod_{l=1}^L P_l(\varphi)^{r_l}\, dv_g.
\end{equation}
Since the operator $L_0$ is symmetric, we have that
\begin{equation}\label{eq:2}
(L_0 P_L(\varphi), u_0)_h=( P_L(\varphi), L_0u_0)_h=-(\crit-2)\int_M u_0^{\crit-1}P_L(\varphi)\, dv_g.
\end{equation}
Plugging into \eqref{eq:1} the expression of $\int_M u_0^{\crit-1}P_L(\varphi)\, dv_g$ obtained by combining \eqref{eq:2} and \eqref{eq:3}, we get that
\begin{multline}
\Vert u_0+\varphi+\phi(\varphi)\Vert_{\crit}^{\crit}= \Vert u_0\Vert_{\crit}^{\crit}+\\\frac{n}{2}\sum_{L=2}^N\sum_{j=2}^L\sum_{\sum_l r_l=j;\sum_l lr_l=L}\frac{(j-2)\prod_{i=1}^{j-1}(\crit-i)}{\prod_{l=1}^L r_l!}\int_M u_0^{\crit-j}\prod_{l=1}^L P_l(\varphi)^{r_l} dv_g
+o\left(\Vert \varphi\Vert^N\right).\label{eq:star}
\end{multline}
Note that the term in the above sum vanishes for $j=2$. As one checks, for any $3\leq j\leq L$, we have that
%, we get that
%\begin{eqnarray}
%&&\Vert u_0+\varphi+\phi(\varphi)\Vert_{\crit}^{\crit} =\Vert u_0\Vert_{\crit}^{\crit}\nonumber\\
%&&+\frac{n}{2}\sum_{L=3}^N\sum_{j=3}^L\sum_{\sum_l r_l=j\, ;\, \sum_l lr_l=L}\frac{(j-2)\prod_{i=1}^{j-1}(\crit-i)}{\prod_l r_l!}\int_M u_0^{\crit-j}\prod_l P_l(\varphi)^{r_l}\, dv_g\nonumber\\
%&&+o\left(\Vert \varphi\Vert^N\right).\label{eq:4}
%\end{eqnarray}
\begin{align*}
&\sum_{q=1}^{L-1}\frac{L-2q}{L}\sum_{\sum_l s_l=j-1\, ;\, \sum_l ls_l=L-q}\frac{1}{\prod_l s_l!}\int_M u_0^{\crit-j}\Big(\prod_l P_l(\varphi)^{s_l}\Big)P_q(\varphi)\, dv_g\\
&\quad=\sum_{q=1}^{L-1}\frac{L-2q}{L}\sum_{\sum_l r_l=j\, ;\, \sum_l lr_l=L}\frac{r_q}{\prod_l r_l!}\int_M u_0^{\crit-j}\prod_l P_l(\varphi)^{r_l}\, dv_g\\
&\quad=\sum_{\sum_l r_l=j\, ;\, \sum_l lr_l=L}\left(\sum_{q=1}^{L-1}\frac{L-2q}{L}r_q\right)\frac{1}{\prod_l r_l!}\int_M u_0^{\crit-j}\prod_l P_l(\varphi)^{r_l}\, dv_g\\
&\quad=(j-2)\sum_{\sum_l r_l=j\, ;\, \sum_l lr_l=L}\frac{1}{\prod_l r_l!}\int_M u_0^{\crit-j}\prod_l P_l(\varphi)^{r_l}\, dv_g.
\end{align*}
Plugging this identity into \eqref{eq:star} yields
\begin{equation}\label{eq:43}\Vert u_0+\varphi+\phi(\varphi)\Vert_{\crit}^{\crit} = \Vert u_0\Vert_{\crit}^{\crit}+\frac{n}{2}\sum_{L=3}^N\sum_{q=1}^{L-2}\frac{L-2q}{L}u_{L-q,q}(\varphi)+o\left(\Vert \varphi\Vert^N\right),
\end{equation}
where
\begin{multline*}
u_{k, q}(\varphi):=\sum_{j=2}^{k}\left(\prod_{i=1}^{j}(\crit-i)\right)\\
\times\sum_{\sum_l s_l=j\, ;\, \sum_l ls_l=k}\frac{1}{\prod_l s_l!}\int_M u_0^{\crit-1-j}\Big(\prod_l P_l(\varphi)^{s_l}\Big)P_q(\varphi)\, dv_g\,.
\end{multline*}
For any $L,q$ such that $q\geq 2$ and $L-q\geq 2$, the self-adjointness of $L_0$ yields $(L_0 P_q(\varphi), P_{L-q}(\varphi))_h=(P_q(\varphi), L_0P_{L-q}(\varphi))_h$. Taking the explicit expression of \eqref{carac:P:L} then yields
$$u_{L-q, q}(\varphi)=u_{q, L-q}(\varphi)\hbox{ for }2\leq q\leq L-2\,.$$
Therefore, for $L\geq 4$, we get that 
$$\sum_{q=2}^{L-2}\frac{L-2q}{L}u_{L-q,q}(\varphi)=0\,,$$
and then \eqref{eq:43} yields
\begin{eqnarray}
&&\Vert u_0+\varphi+\phi(\varphi)\Vert_{\crit}^{\crit} = \Vert u_0\Vert_{\crit}^{\crit}+\frac{n}{2}\sum_{L=3}^N\frac{L-2}{L}u_{L-1,1}(\varphi)+o\left(\Vert \varphi\Vert^N\right).\label{eq:5}
\end{eqnarray}
We now estimate $\Vert u_0+\varphi+\phi(\varphi)\Vert_{h}^{2}-\Vert u_0+\varphi+\phi\Vert_{\crit}^{\crit}$. Using \eqref{def:psi} and that $u_0,\phi(\varphi)\in K_0^\perp$ for all $\varphi\in K_0$, we get that (writing $\phi=\phi(\varphi)$ for simplicity)
\begin{align}
&\Vert u_0+\varphi+\phi\Vert_{h}^{2}-\Vert u_0+\varphi+\phi\Vert_{\crit}^{\crit} \label{eq:6}\\
&\quad= \big( u_0+\varphi+\phi, u_0+\varphi+\phi-(\Delta_g+h)^{-1}(u_0+\varphi+\phi)^{\crit-1}\big)_h\nonumber\\
&\quad= \big( \Pi_{K_0}(u_0+\varphi+\phi), \Pi_{K_0}(u_0+\varphi+\phi-(\Delta_g+h)^{-1}(u_0+\varphi+\phi)^{\crit-1})\big)_h\nonumber\\
&\quad= \big( \varphi, \varphi-(\Delta_g+h)^{-1}(u_0+\varphi+\phi)^{\crit-1})\big)_h\nonumber\\
&\quad= \Vert \varphi\Vert_h^2-\int_M \left( u_0+\varphi+\phi\right)^{\crit-1}\varphi\, dv_g\,.\nonumber
\end{align}
We fix $N\geq 3$ and write $\phi(\varphi)=\sum_{L=2}^{N-1}P_l(\varphi)+o\left(\Vert \varphi\Vert^{N-1}\right)$ when $\varphi\to 0$. A Taylor expansion and \eqref{poly:expo:k} yield
\begin{multline}
\int_M \left( u_0+\varphi+\phi(\varphi)\right)^{\crit-1}\varphi\, dv_g\label{eq:7}\\
= \int_M u_0^{\crit-1}\varphi\, dv_g+(\crit-1)\int_M u_0^{\crit-2}\varphi^2\, dv_g+\sum_{l=2}^{N-1}(\crit-1)\int_M u_0^{\crit-2}\varphi P_l(\varphi)\, dv_g\\
+\sum_{L=3}^N\sum_{j=2}^{L-1}\left(\prod_{i=1}^{j}(\crit-i)\right)\sum_{\sum_l s_l=j\, ;\, \sum_l ls_l=L-1}\frac{1}{\prod_l s_l!}\int_M u_0^{\crit-1-j}\Big(\prod_l P_l(\varphi)^{s_l}\Big)\varphi\, dv_g\\
+o\left(\Vert \varphi\Vert^{N}\right)
\end{multline}
when $\varphi\to 0$. The definition \eqref{def:K0} of $K_0$ yields 
\begin{equation}\label{eq:phi:norm}
(\crit-1)\int_M u_0^{\crit-2}\varphi^2\, dv_g=\Vert\varphi\Vert_h^2\,.
\end{equation}
Moreover, since $P_l(\varphi)\in K_0^\perp$ for all $l\geq 2$, we get that
\begin{equation}\label{eq:8}
\sum_{l=2}^{N-1}(\crit-1)\int_M u_0^{\crit-2}\varphi P_l(\varphi)\, dv_g=\left(\sum_{l=2}^{N-1}P_l(\varphi),\varphi\right)_h=0\,.
\end{equation}
Plugging together \eqref{ortho:u0:K0} and \eqref{eq:6}--\eqref{eq:8} yields
\begin{equation}
\Vert u_0+\varphi+\phi(\varphi)\Vert_{h}^{2}-\Vert u_0+\varphi+\phi(\varphi)\Vert_{\crit}^{\crit}=-\sum_{L=3}^Nu_{L-1,1}(\varphi)+o\left(\Vert \varphi\Vert^{N}\right)\label{eq:9}
\end{equation}
when $\varphi\to 0$. We define 
\begin{multline}\label{def:A:L}
A_L(\varphi):=-u_{L-1,L}\\
=-\sum_{j=2}^{L-1}\left(\prod_{i=1}^{j}(\crit-i)\right)\sum_{\sum_l s_l=j\, ;\, \sum_l ls_l=L-1}\frac{1}{\prod_l s_l!}\int_M u_0^{\crit-1-j}\Big(\prod_l P_l(\varphi)^{s_l}\Big)\varphi\, dv_g
\end{multline}
which is a homogenous polynomial of degree $L$. Claim \ref{claim:1} then follows from \eqref{eq:5}, \eqref{eq:6}, \eqref{eq:9} and the analyticity of $\varphi\mapsto\phi(\varphi)$ (see Proposition \ref{prop:analycity}).
\endproof

\noindent We define
$$\mathcal{S}_{K_0}:=\{\varphi\in K_0/\, \Vert \varphi\Vert_h=1\}.$$
For any $\varphi\in \mathcal{S}_{K_0}$ and any $t\in (-\nu_0, \nu_0)$, we define
$$f_\varphi(t):=\frac{J_0(u_0+t\varphi+\phi(t\varphi))-J_0(u_0)}{t^2\cdot \Vert u_0\Vert_h^2}\hbox{ if }t\neq 0\hbox{ and }f_\varphi(0)=0\,.$$
It follows from Claim \ref{claim:1} that $f_\varphi$ is analytic on $(-\nu_0,\nu_0)$ and that
$$\Vert u_0+t\varphi+\phi(t\varphi)\Vert_{\crit}^{\crit} = \Vert u_0\Vert_{\crit}^{\crit}\left(1-\frac{n}{2}t^3f_\varphi'(t)\right)$$
for $|t|<\nu_0$. Therefore, we have that
\begin{multline}
I_0(u_0+t\varphi+\phi(t\varphi))\\
=I_0(u_0)\left(1+2t^2 f_\varphi(t)-\frac{n-2}{2}t^3f_\varphi'(t)\right)\cdot\left(1-\frac{n}{2}t^3f_\varphi'(t)\right)^{-\frac{2}{\crit}}.\label{id:I:t:f}
\end{multline}

\begin{claim}\label{claim:2} 
We assume that $u_0$ is a strict local minimizer of $I_0$. Then there exists $\nu_1\in (0,\nu_0)$ such that for any $\varphi\in \mathcal{S}_{K_0}$ and $t\in (-\nu_1,\nu_1)\setminus \{0\}$, there holds
\begin{align*}
f_\varphi(t)=0\; &\Rightarrow\; f_\varphi'(t)\neq 0\,,\\
f_\varphi'(t)=0\; &\Rightarrow\; f_\varphi(t)>0\,.
\end{align*}
\end{claim}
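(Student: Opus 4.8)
The plan is to feed the strict local minimality of $u_0$ for $I_0$ into the identity \eqref{id:I:t:f} and to read both implications off a single pointwise inequality. First I would fix $\nu_1$. Let $\nu>0$ be the radius appearing in the definition of a strict local minimizer of $I_0$. Since $\phi$ is of class $C^1$ with $d\phi_0\equiv0$, we have $\Vert\phi(w)\Vert_h=o(\Vert w\Vert_h)$ as $w\to0$ in $K_0$, hence $\Vert t\varphi+\phi(t\varphi)\Vert_h=(1+o(1))|t|$ as $t\to0$ uniformly for $\varphi\in\mathcal{S}_{K_0}$; so there is $\nu_1\in(0,\nu_0)$ such that $u_0+t\varphi+\phi(t\varphi)\in B_\nu(u_0)$ for all $\varphi\in\mathcal{S}_{K_0}$ and all $|t|<\nu_1$. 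I would also record that $1-\frac{n}{2}t^3f_\varphi'(t)>0$ on $(-\nu_0,\nu_0)$: indeed $\Vert u_0+t\varphi+\phi(t\varphi)\Vert_{\crit}^{\crit}=\Vert u_0\Vert_{\crit}^{\crit}\bigl(1-\frac{n}{2}t^3f_\varphi'(t)\bigr)$ (a consequence of Claim \ref{claim:1}) and the left-hand side is positive since $u_0+t\varphi+\phi(t\varphi)>0$ by Proposition \ref{prop:def:phibar}. In particular the right-hand side of \eqref{id:I:t:f} is well defined and positive.

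The decisive observation is that $u_0+t\varphi+\phi(t\varphi)\notin\rr u_0$ whenever $t\neq0$. Indeed, by \eqref{ortho:u0:K0} and the construction of $\phi$, both $u_0$ and $\phi(t\varphi)$ lie in $K_0^\perp$, so $\rr u_0\subset K_0^\perp$, whereas $\Pi_{K_0}(u_0+t\varphi+\phi(t\varphi))=t\varphi\neq0$ for $t\neq0$. Consequently, for $\varphi\in\mathcal{S}_{K_0}$ and $t\in(-\nu_1,\nu_1)\setminus\{0\}$, the strict local minimality of $u_0$ yields $I_0(u_0+t\varphi+\phi(t\varphi))>I_0(u_0)$, and $I_0(u_0)>0$ since $\Vert u_0\Vert_h^2=\Vert u_0\Vert_{\crit}^{\crit}>0$ by \eqref{eq:u0}. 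Dividing \eqref{id:I:t:f} by $I_0(u_0)$ then gives
\[
\left(1+2t^2f_\varphi(t)-\frac{n-2}{2}t^3f_\varphi'(t)\right)\left(1-\frac{n}{2}t^3f_\varphi'(t)\right)^{-\frac{2}{\crit}}>1
\]
for all such $t$ and $\varphi$.

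Finally I would extract the two implications from this inequality. If $f_\varphi'(t)=0$, the second factor equals $1$, so the inequality reduces to $1+2t^2f_\varphi(t)>1$ and hence $f_\varphi(t)>0$ because $t\neq0$; this is the second implication. For the first, assume $f_\varphi(t)=0$; if we also had $f_\varphi'(t)=0$, then $2t^2f_\varphi(t)$ and $t^3f_\varphi'(t)$ would both vanish and the left-hand side above would equal $1$, contradicting the strict inequality, so necessarily $f_\varphi'(t)\neq0$. I do not expect a real obstacle here: once \eqref{id:I:t:f} is available the argument is only a few lines, and the sole point requiring a little care is the uniform choice of $\nu_1$ over $\varphi\in\mathcal{S}_{K_0}$, which is immediate from the estimate $\Vert\phi(w)\Vert_h=o(\Vert w\Vert_h)$ together with the fact that $\mathcal{S}_{K_0}$ is the unit sphere of the finite-dimensional space $K_0$.
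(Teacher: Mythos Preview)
Your proof is correct and follows essentially the same route as the paper's: both arguments feed the strict minimality of $u_0$ into the identity \eqref{id:I:t:f} and rule out the simultaneous vanishing of $f_\varphi(t)$ and $f_\varphi'(t)$ by checking that $u_0+t\varphi+\phi(t\varphi)\notin\rr u_0$ for $t\neq0$. Your presentation is slightly more explicit (the uniform choice of $\nu_1$, the positivity of the second factor, and the direct projection argument onto $K_0$), but the underlying logic is the same as in the paper.
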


\proof[Proof of Claim \ref{claim:2}] 
If $f_\varphi(t)=f_\varphi'(t)=0$, it then follows from \eqref{id:I:t:f} that $u_0+t\varphi+\phi(t\varphi)$ is a minimizer for $I_0$ close to $u_0$, and therefore there exists $\lambda_t>0$ such that $u_0+t\varphi+\phi(t\varphi)=\lambda_t\cdot u_0$ for $t$ small. It then follows from the definition \eqref{def:psi} of $\phi(t\varphi)$ that $\lambda_t=1$ and that $t\varphi=0$, which is a contradiction since $t\neq 0$ and $\varphi\neq 0$. Therefore $f_\varphi(t)$ and $f_\varphi'(t)$ cannot vanish simultaneously for $t\neq 0$. Moreover, if $f_\varphi'(t)=0$, \eqref{id:I:t:f} yields $f_\varphi(t)\geq 0$. Combining these assertions yields Claim \ref{claim:2}.
\endproof

\begin{claim}\label{claim:3} 
We assume that $u_0$ is a strict local minimizer of $I_0$. We claim that for all $\varphi\in \mathcal{S}_{K_0}$, there exists $\tilde{t}_\varphi\in (0, \nu_1)$ such that $f_\varphi(t)>0$ for all $t\in (0, \tilde{t}_\varphi)$.
\end{claim}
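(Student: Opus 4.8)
The plan is to use the analyticity of $f_\varphi$ together with Claim \ref{claim:2} to rule out the possibility that $f_\varphi$ is negative immediately to the right of $0$. First I would observe that since $f_\varphi$ is analytic on $(-\nu_0,\nu_0)$ with $f_\varphi(0)=0$, either $f_\varphi\equiv 0$ on a neighborhood of $0$ or there is a smallest integer $k\geq 1$ with $f_\varphi^{(k)}(0)\neq 0$, so that $f_\varphi(t)=c\,t^k+o(t^k)$ as $t\to 0$ with $c\neq 0$. The case $f_\varphi\equiv 0$ near $0$ is excluded by Claim \ref{claim:2} (taking any small $t\neq 0$ gives $f_\varphi(t)=f_\varphi'(t)=0$, a contradiction), so we are in the second case.

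Next I would show $f_\varphi(t)>0$ for $t>0$ small. Suppose not; then, since $f_\varphi(t)=c\,t^k+o(t^k)$, we must have either $c<0$, or $c>0$ with $k$ even --- wait, $c>0$ forces $f_\varphi(t)>0$ for small $t>0$, so the only bad case is $c<0$ (and any parity of $k$). Assume $c<0$, so $f_\varphi(t)<0$ for all $t\in(0,\eta)$ with $\eta$ small. Since $f_\varphi$ is continuous and, by Claim \ref{claim:3}'s hypotheses via Claim \ref{claim:2}, $f_\varphi$ and $f_\varphi'$ never vanish simultaneously on $(0,\nu_1)\setminus\{0\}$, I would track the behavior on the interval $(0,\eta)$: if $f_\varphi<0$ throughout, then since $f_\varphi(0)=0$ there is a sequence $t_j\downarrow 0$ with $f_\varphi'(t_j)\leq 0$; more carefully, by the mean value theorem applied on $(0,t)$ there is $s_t\in(0,t)$ with $f_\varphi'(s_t)=f_\varphi(t)/t<0$. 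Now at any point $s$ where $f_\varphi'(s)=0$ with $s\in(0,\nu_1)$ we have $f_\varphi(s)>0$ by Claim \ref{claim:2}; but if $f_\varphi<0$ on all of $(0,\eta)$ then $f_\varphi'$ has no zero in $(0,\eta)$ (such a zero would force $f_\varphi>0$ there), so $f_\varphi'$ has constant sign on $(0,\eta)$. Combined with $f_\varphi'(s_t)<0$ we get $f_\varphi'<0$ on $(0,\eta)$, which is consistent --- so this alone is not yet a contradiction, and I must push to the point where $f_\varphi$ would have to return to $0$ or change sign.

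The key step --- and the main obstacle --- is to derive the contradiction from $f_\varphi<0$ using the global strict minimization property of $u_0$ rather than only the local infinitesimal information. The clean argument: from $f_\varphi'<0$ on $(0,\eta)$ and $f_\varphi(0)=0$ we get $f_\varphi<0$ there; but then plug into \eqref{id:I:t:f}: with $f_\varphi(t)<0$ and $f_\varphi'(t)<0$ we obtain $1+2t^2f_\varphi(t)-\frac{n-2}{2}t^3f_\varphi'(t)$; the sign of the correction is not immediate, so instead I would argue directly with $J_0$: $f_\varphi(t)<0$ means $J_0(u_0+t\varphi+\phi(t\varphi))<J_0(u_0)$. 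Since $u(t\varphi)=u_0+t\varphi+\phi(t\varphi)$ satisfies \eqref{eq:phi}, one checks (this is exactly the computation of Claim \ref{claim:1}) that $J_0(u(t\varphi))=\tfrac12\|u(t\varphi)\|_h^2-\tfrac1{\crit}\|u(t\varphi)\|_{\crit}^{\crit}$ and that the identity \eqref{id:I:t:f} is an equivalence: $J_0(u(t\varphi))<J_0(u_0)$ forces, after the algebra in \eqref{id:I:t:f}, that $I_0(u(t\varphi))<I_0(u_0)$ for $t$ small (since the factor $(1-\frac{n}{2}t^3f_\varphi'(t))^{-2/\crit}\to 1$ and the bracket is dominated by $1+2t^2f_\varphi(t)<1$). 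This contradicts that $u_0$ is a strict local minimizer of $I_0$, because $u(t\varphi)\notin\mathbb{R}u_0$ for $t\neq 0$ small (again by \eqref{def:psi}, as in the proof of Claim \ref{claim:2}). Hence $c>0$, i.e. $f_\varphi(t)>0$ for $t\in(0,\tilde t_\varphi)$ for some $\tilde t_\varphi\in(0,\nu_1)$, which is the claim. The delicate point to get right is the sign bookkeeping in \eqref{id:I:t:f} near $t=0$, ensuring that a negative leading term of $f_\varphi$ genuinely produces $I_0(u(t\varphi))<I_0(u_0)$ and not merely an inconclusive inequality.
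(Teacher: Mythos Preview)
Your overall strategy matches the paper's: use analyticity to write $f_\varphi(t)=c\,t^k+o(t^k)$ with $c\neq 0$ (ruling out $f_\varphi\equiv 0$ via Claim~\ref{claim:2}), then plug into \eqref{id:I:t:f} and use the strict local minimality of $u_0$ for $I_0$ to force $c>0$. However, your execution of the ``delicate sign bookkeeping'' --- which you rightly flag as the crux --- is incorrect, and this is a genuine gap.

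The error is in the sentence ``the factor $(1-\frac{n}{2}t^3f_\varphi'(t))^{-2/\crit}\to 1$ and the bracket is dominated by $1+2t^2f_\varphi(t)<1$''. Both claims are misleading: if $f_\varphi(t)=c\,t^k+o(t^k)$ then $f_\varphi'(t)=ck\,t^{k-1}+o(t^{k-1})$, so $t^2f_\varphi(t)$ and $t^3f_\varphi'(t)$ are of the \emph{same} order $t^{k+2}$. Neither the second factor nor the term $-\frac{n-2}{2}t^3f_\varphi'(t)$ in the first bracket can be discarded. In fact, for $c<0$ and $k>\frac{4}{n-2}$ the first bracket is $>1$, not $<1$, so your heuristic gives the wrong sign. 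Moreover, your attempt to bypass this via $J_0$ is circular: deducing $I_0(u(t\varphi))<I_0(u_0)$ from $J_0(u(t\varphi))<J_0(u_0)$ is precisely (part of) the statement of Theorem~\ref{th:minim} that Claim~\ref{claim:3} is being used to prove.

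The fix is to do the full expansion. With $f_\varphi(t)=c\,t^k+o(t^k)$ and $f_\varphi'(t)=ck\,t^{k-1}+o(t^{k-1})$, and using $\frac{2}{\crit}=\frac{n-2}{n}$, one finds after multiplying out \eqref{id:I:t:f} that the $t^{k+2}$ contributions from the two factors combine to give exactly
\[
I_0(u_0+t\varphi+\phi(t\varphi))=I_0(u_0)\bigl(1+2c\,t^{k+2}+o(t^{k+2})\bigr)
\]
as $t\to 0$; the terms involving $k$ cancel. This is the paper's computation, and from it $c<0$ immediately contradicts the strict local minimality of $u_0$. Your long detour through the mean value theorem and the sign of $f_\varphi'$ on $(0,\eta)$ is unnecessary and can be dropped entirely.
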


\proof[Proof of Claim \ref{claim:3}] 
It follows from Claim \ref{claim:2} that $f_\varphi$ does not vanish identically. Since it is analytic, there exists $a\neq 0$ and $k\geq 1$ (both depending on $\varphi$) such that $f_\varphi(t)=a t^k+o(t^k)$ when $t\to 0$. Obtaining from this the expansion of $f_\varphi'(t)$ and plugging these expressions into \eqref{id:I:t:f} yield
$$I_0(u_0+t\varphi+\phi(t\varphi))=I_0(u_0)(1+2a t^{k+2}+o(t^{k+2}))$$
when $t\to 0$. Since $u_0$ is a local minimizer, we get that $a\geq 0$, and then $a>0$. This yields the existence of $\tilde{t}_\varphi$. This proves Claim \ref{claim:3}.
\endproof

\noindent It follows from Claims \ref{claim:2} and \ref{claim:3} that for any $\varphi\in \mathcal{S}_{K_0}$, there exists $t_\varphi\in (0,\nu_1]$ such that $f_\varphi(t)>0$ for all $t\in (0,t_\varphi)$, and in case $t_\varphi<\nu_1$, we have that $f_\varphi(t)<0$ for all $t\in (t_\varphi,\nu_1)$.

\begin{claim}\label{claim:4} 
We assume that $u_0$ is a strict local minimizer of $I_0$. We claim that there exists $\nu_2>0$ such that $t_\varphi>\nu_2$ for all $\varphi\in \mathcal{S}_{K_0}$.
\end{claim}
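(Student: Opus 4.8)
\textbf{Proof proposal for Claim~\ref{claim:4}.} The plan is a compactness argument on the finite-dimensional sphere $\mathcal{S}_{K_0}$, combined with the joint continuity of $(\varphi,t)\mapsto f_\varphi(t)$ and the sign information on $f_\varphi$ collected just before the claim. The subtle point, which I expect to be the main obstacle, is that $\varphi\mapsto t_\varphi$ need not be continuous: a small perturbation of $\varphi$ could create a new zero of $f_\varphi$ at a smaller value of $t$, so $t_\varphi$ can jump downward. I will therefore not attempt to prove continuity of $t_\varphi$, but only a one-sided (lower semicontinuity) statement, which is all that is needed, and which follows from the fact recalled above that $f_\varphi>0$ on $(0,t_\varphi)$ while $f_\varphi<0$ on $(t_\varphi,\nu_1)$ whenever $t_\varphi<\nu_1$.

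First I would record that $(\varphi,t)\mapsto f_\varphi(t)$ is continuous on $\mathcal{S}_{K_0}\times(-\nu_0,\nu_0)$. Indeed, carrying out the computation underlying Claim~\ref{claim:1} with $\varphi$ replaced by $t\varphi$ and using that each $A_L$ is homogeneous of degree $L$, one gets $J_0(u_0+t\varphi+\phi(t\varphi))-J_0(u_0)=\sum_{L\geq 3}L^{-1}t^{L}A_L(\varphi)$, hence $f_\varphi(t)=\Vert u_0\Vert_h^{-2}\sum_{L\geq 3}L^{-1}t^{L-2}A_L(\varphi)$ on a fixed neighbourhood of $0$. This is a series of continuous (indeed polynomial) functions of $\varphi$ converging uniformly on $\mathcal{S}_{K_0}\times[-\nu,\nu]$ for $\nu$ small, by the analyticity of $\varphi\mapsto\phi(\varphi)$ (Proposition~\ref{prop:def:phibar}), so $f_\varphi(t)$ is jointly continuous. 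Moreover $\mathcal{S}_{K_0}$ is compact since $\dim_{\rr}K_0=d<\infty$.

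Now suppose the claim fails. Then there is a sequence $(\varphi_k)_k$ in $\mathcal{S}_{K_0}$ with $t_{\varphi_k}\to 0$, and, after passing to a subsequence, $\varphi_k\to\varphi_\infty\in\mathcal{S}_{K_0}$. By the conclusion drawn from Claims~\ref{claim:2} and~\ref{claim:3}, $t_{\varphi_\infty}>0$, so I may fix $\tau\in(0,t_{\varphi_\infty})$; since $t_{\varphi_\infty}\leq\nu_1$ this forces $\tau<\nu_1$, and $f_{\varphi_\infty}(\tau)>0$. By the joint continuity established above, $f_{\varphi_k}(\tau)>0$ for $k$ large. On the other hand, for $k$ large we have $t_{\varphi_k}<\tau<\nu_1$, hence $\tau\in(t_{\varphi_k},\nu_1)$, and therefore $f_{\varphi_k}(\tau)<0$ by the sign information recalled above — a contradiction. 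This yields $\nu_2>0$ with $t_\varphi>\nu_2$ for all $\varphi\in\mathcal{S}_{K_0}$, which is Claim~\ref{claim:4}.
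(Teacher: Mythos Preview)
Your argument is correct and essentially identical to the paper's: both argue by contradiction, extract a convergent subsequence on the compact sphere $\mathcal{S}_{K_0}$, and use the sign information $f_{\varphi_k}<0$ on $(t_{\varphi_k},\nu_1)$ together with the continuity of $(\varphi,t)\mapsto f_\varphi(t)$ to contradict Claim~\ref{claim:3}. The only differences are cosmetic---the paper passes to the limit to obtain $f_{\varphi_\infty}(t)\leq 0$ for every $t\in(0,\nu_1)$, while you contradict at a single $\tau$---and your explicit justification of the joint continuity via the power-series expansion of $f_\varphi$ is a detail the paper leaves implicit.
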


\proof[Proof of Claim \ref{claim:4}] 
We prove Claim \ref{claim:4} by contradiction. Indeed, otherwise, there exists a sequence $(\varphi_i)\in \mathcal{S}_{K_0}$ such that $t_{\varphi_i}\to 0$ when $i\to +\infty$ and $f_{\varphi_i}(t_{\varphi_i})=0$ for all $i$. Up to a subsequence, we can assume that $\varphi_i\to \varphi\in \mathcal{S}_{K_0}$ when $i\to +\infty$. We fix $t\in (0,\nu_1)$. Then for $i$ large enough, we have $t_{\varphi_i}<t$, and therefore $f_{\varphi_i}(t)<0$. Passing to the limit when $i\to +\infty$ yields $f_\varphi(t)\leq 0$ for all $t\in (0,\nu_1)$. This is a contradiction with Claim \ref{claim:3}. This proves Claim \ref{claim:4}.
\endproof

\proof[Proof of Theorem \ref{th:minim}, first implication:] 
We assume that $u_0$ is a strict local minimizer of $I_0$. It follows from Claim \ref{claim:4} that $J_0(u_0+\varphi+\phi(\varphi))>J_0(u_0)$ for all $\varphi\in B_{\nu_2}(0)\setminus\{0\}$. This proves the first implication of Theorem \ref{th:minim}.

\proof[Proof of Theorem \ref{th:minim}, second implication:] 
We assume that there exists $\nu_1>0$ such that $J_0(u_0+\varphi+\phi(\varphi))>J_0(u_0)$ for all $\varphi\in B_{\nu_1}(0)\setminus\{0\}$. For $\varphi\in B_{\nu_1}(0)$, we define $\delta A(\varphi)$ and $\delta B(\varphi)$ such that
$$\Vert u_0+\varphi+\phi(\varphi)\Vert_{h}^{2}=\Vert u_0\Vert_h^2\cdot(1+\delta A(\varphi))\hbox{ and }\Vert u_0+\varphi+\phi(\varphi)\Vert_{\crit}^{\crit}=\Vert u_0\Vert_h^2\cdot(1+\delta B(\varphi)).$$
Therefore, we have that
\begin{align}
J_0(u_0+\varphi+\phi(\varphi))&=J_0(u_0)+\Vert u_0\Vert_h^2\cdot\left(\frac{1}{2}\delta A(\varphi)-\frac{1}{\crit}\delta B(\varphi)\right),\label{eq:J}\\
I_0(u_0+\varphi+\phi(\varphi))&=I_0(u_0)\cdot (1+\delta A(\varphi))\left(1+\delta B(\varphi)\right)^{-2/\crit}\label{eq:I}
\end{align}
for all $\varphi\in B_{\nu_1}(0)$. It follows from our assumption and \eqref{eq:J} that $\delta A(\varphi)>\frac{2}{\crit}\delta B(\varphi)$ for all $\varphi\in B_{\nu_1}(0)\setminus\{0\}$. It then follows from \eqref{eq:I} that 
\begin{equation}\label{min:I}
I_0(u_0+\varphi+\phi(\varphi))>I_0(u_0)\hbox{ for all }\varphi\in B_{\nu_1}(0)\setminus\{0\}.
\end{equation}
We now let $(u_i)\in H_1^2(M)$ be minimizers for $I_0$ such that $\lim_{i\to +\infty} u_i=u_0$. It follows from regularity theory that $u_i\in C^{2,\theta}(M)$ for all $i$ and that the convergence holds in $C^{2,\theta}(M)$. Without loss of generality, we can assume that $u_i$ is a solution to \eqref{eq:u0} for all $i$. It then follows from the definition of $\phi$ (see Proposition \ref{prop:def:phibar}) that there exists $\varphi_i\in K_0$ such that $u_i=u_0+\varphi_i+\phi(\varphi_i)$ for all $i$. Since $u_i$ is a local minimizer, it then follows from \eqref{min:I} that $\varphi_i=0$ for $i$ large, and thus $u_i=u_0$. Then $u_0$ is a strict local minimizer of $I_0$. This proves the second implication of Theorem~\ref{th:minim}.
\endproof

\section{Examples}\label{sec:ex}

In this section, we provide examples of strict local minimizers for the functional $I_0$, and hence for $J_0$ by Theorem \ref{th:minim}. We let $u_0\in C^2(M)$ be a solution to \eqref{eq:u0}. In particular $I_0^\prime(u_0)=0$. As a preliminary remark,
\begin{equation}\label{basic:min}
\hbox{if }u_0\hbox{ is a local minimizer of }I_0\hbox{ then }I_0^{\prime\prime}(u_0)\geq 0.
\end{equation}
Moreover, since $u_0$ is a solution to \eqref{eq:u0}, the kernel of $I_0^{\prime\prime}(u_0)$ is given as follows: for any $f_0\in H_1^2(M)$,
\begin{equation}\label{ker:II}
\left\{I_0^{\prime\prime}(u_0)(f_0,f)=0\hbox{ for all }f\in H_1^2(M)\right\}
\Leftrightarrow\{f_0\in \rr u_0\oplus K_0\}.
\end{equation}
Therefore, $I_0^{\prime\prime}(u_0)$ cannot be positive definite, and a specific analysis along $K_0$ is necessary. It follows from the expression \eqref{def:A:L} of $A_L(\varphi)$ that 
\begin{align}
A_3(\varphi)&=-\frac{(\crit-1)(\crit-2)}{2}\int_M u_0^{\crit-3}\varphi^3\, dv_g\,,\label{eq:A3}\\
A_4(\varphi)&= -(\crit-1)(\crit-2)\bigg(\int_M u_0^{\crit-3}\varphi^2 P_2(\varphi)\, dv_g\label{eq:A4}\\
&\qquad+\frac{\crit-3}{6}\int_M u_0^{\crit-4}\varphi^4\, dv_g\bigg)\nonumber
\end{align}
for all $\varphi\in K_0$. Moreover, it follows from Claim \ref{claim:1} that
\begin{equation}\label{Taylor:I0}
I_0(u_0+\varphi+\phi(\varphi))=I_0(u_0)\cdot\left(1+\frac{2A_3(\varphi)}{3\Vert u_0\Vert_{\crit}^{\crit}}+\frac{A_4(\varphi)}{2\Vert u_0\Vert_{\crit}^{\crit}}+o(\Vert\varphi\Vert^4)\right)
\end{equation}
when $\varphi\to 0$. Therefore, 
\begin{equation}\label{nec:min}
\hbox{if }u_0\hbox{ is a local minimizer of }I_0\hbox{ then }A_3\equiv 0\hbox{ and }A_4(\varphi)\geq 0\hbox{ for all }\varphi\in K_0.
\end{equation}
In the case of the Yamabe equation, this condition appeared in Kobayashi \cite{kobayashi}. Conversely, we have the following result:

\begin{prop}\label{cond:min}
Assume that $A_3\equiv 0$, $I_0^{\prime\prime}(u_0)\geq 0$ and $A_4(\varphi)>0$ for all $\varphi\in K_0\setminus \{0\}$. Then $u_0$ is a strict local minimizer for $I_0$. Moreover, there exists $\nu_1>0$ such that $u_0$ is the only solution to $\Delta_g u+hu=u^{\crit-1}$ in $B_{\nu_1}(u_0)$.
\end{prop}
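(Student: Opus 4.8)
The plan is to use the Taylor expansion \eqref{Taylor:I0} together with the equivalence provided by Theorem \ref{th:minim}, and to separate the analysis into the directions spanned by $\rr u_0\oplus K_0$ and its orthogonal complement. First I would write an arbitrary element near $u_0$ as $u=\lambda u_0+\varphi+\psi$ with $\lambda$ close to $1$, $\varphi\in K_0$ small, and $\psi\in(\rr u_0\oplus K_0)^\perp$ small; by homogeneity of $I_0$ it suffices to treat $\lambda=1$, i.e. $u=u_0+\varphi+\psi$. The key point, already recorded in \eqref{ker:II}, is that $I_0''(u_0)$ is positive definite on $(\rr u_0\oplus K_0)^\perp$ — this is where the hypothesis $I_0''(u_0)\ge 0$ is upgraded: since the kernel of the nonnegative quadratic form $I_0''(u_0)$ is exactly $\rr u_0\oplus K_0$ and the form is Fredholm, its restriction to a closed complement of the kernel is coercive, so there is $c>0$ with $I_0''(u_0)(\psi,\psi)\ge c\Vert\psi\Vert_h^2$ for $\psi\perp(\rr u_0\oplus K_0)$. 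Consequently, for the $\psi$ variable a standard argument shows that $\phi(\varphi)$ (the function produced by Proposition \ref{prop:def:phibar}) minimizes $I_0$ in the $\psi$-direction, and $I_0(u_0+\varphi+\psi)\ge I_0(u_0+\varphi+\phi(\varphi))$ with equality only when $\psi=\phi(\varphi)$, modulo terms controlled by the coercivity constant $c$.

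It then remains to show that $\varphi\mapsto I_0(u_0+\varphi+\phi(\varphi))$ has a strict local minimum at $0$ on $K_0$, and for this I would invoke \eqref{Taylor:I0}: since $A_3\equiv 0$ by hypothesis, the expansion reads
\[
I_0(u_0+\varphi+\phi(\varphi))=I_0(u_0)\left(1+\frac{A_4(\varphi)}{2\Vert u_0\Vert_{\crit}^{\crit}}+o(\Vert\varphi\Vert^4)\right).
\]
Because $A_4$ is a homogeneous polynomial of degree $4$ that is strictly positive on $K_0\setminus\{0\}$, and $K_0$ is finite-dimensional, there is $c_4>0$ with $A_4(\varphi)\ge c_4\Vert\varphi\Vert_h^4$; the remainder $o(\Vert\varphi\Vert^4)$ is then absorbed for $\Vert\varphi\Vert_h$ small, giving $I_0(u_0+\varphi+\phi(\varphi))>I_0(u_0)$ for $\varphi\in K_0$, $0<\Vert\varphi\Vert_h<\nu$. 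Combining this with the $\psi$-direction estimate of the previous paragraph yields $I_0(u)>I_0(u_0)$ for all $u$ near $u_0$ with $u\notin\rr u_0$, which is exactly the definition of a strict local minimizer; equivalently one may simply observe that by Theorem \ref{th:minim} it is enough to handle the $K_0$-direction, which is the computation just performed.

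For the last assertion, suppose $u\in B_{\nu_1}(u_0)$ solves $\Delta_g u+hu=u^{\crit-1}$. By elliptic regularity $u\in C^{2,\theta}(M)$ and, shrinking $\nu_1$, $u$ is positive; by the characterization in Proposition \ref{prop:def:phibar} (applied to solutions, which lie on $\mathcal M$) there is $\varphi\in K_0$ small with $u=u_0+\varphi+\phi(\varphi)$. Since $u$ solves \eqref{eq:u0}, it is a critical point of $I_0$, hence a critical point of $\varphi\mapsto I_0(u_0+\varphi+\phi(\varphi))$; but by the strict-minimum estimate above this function has no critical point other than $\varphi=0$ in a small ball, so $\varphi=0$ and $u=u_0$.

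The main obstacle will be the rigorous passage from the nonnegativity hypothesis $I_0''(u_0)\ge 0$ to uniform coercivity of $I_0''(u_0)$ on the complement of $\rr u_0\oplus K_0$, and the careful bookkeeping showing that $u_0+\varphi+\phi(\varphi)$ really minimizes over the $\psi$-slice (rather than merely being a critical point): one must check that the second-order term in $\psi$ dominates the cross terms with $\varphi$ and the higher-order remainder uniformly for small $\varphi$. Once that localization is in place, the finite-dimensional step is the soft part, since positivity of a homogeneous quartic on a finite-dimensional space gives a genuine quartic lower bound.
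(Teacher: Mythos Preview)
Your overall strategy coincides with the paper's: split the variation into the $K_0$-direction (controlled by the quartic $A_4$ via \eqref{Taylor:I0}) and the transverse direction (controlled by coercivity of $I_0''(u_0)$ on the complement of its kernel), then combine. The paper carries this out by Taylor-expanding $I_0(u_0+\varphi+\phi)$ around $u_0+\varphi+\phi(\varphi)$ and bounding the linear, quadratic, and remainder pieces separately; your sketch is the same computation in outline.

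Two places need tightening. First, your assertion that ``$\phi(\varphi)$ minimizes $I_0$ in the $\psi$-direction'' over $(\rr u_0\oplus K_0)^\perp$ is not literally true: $\phi(\varphi)\in K_0^\perp$ may have a $u_0$-component (of order $\Vert\varphi\Vert^2$), and $\phi(\varphi)$ is defined as a critical point for $J_0$, not $I_0$, in the $K_0^\perp$-slice. The paper handles this discrepancy quantitatively: it shows $I_0'(u_0+\varphi+\phi(\varphi))(\phi-\phi(\varphi))=O(\Vert\varphi\Vert^4\Vert\phi-\phi(\varphi)\Vert)$ (using Claim~\ref{claim:1} and $A_3\equiv 0$) and $\Vert\phi-\phi(\varphi)\Vert_h^2=\Vert\Pi_{(\rr u_0\oplus K_0)^\perp}(\phi-\phi(\varphi))\Vert_h^2+O(\Vert\varphi\Vert^4)$, so the first-order term and the $u_0$-component both feed harmlessly into the $\Vert\varphi\Vert^4$ budget. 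Your final paragraph acknowledges exactly this bookkeeping as ``the main obstacle''; it is the substance of the proof, not a formality. (Your aside about Theorem~\ref{th:minim} does not shortcut this: that theorem concerns $J_0$ on $\mathcal M$, and invoking it here would be circular or would require the same transverse estimate anyway.)

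Second, for isolatedness, a strict local minimum---even with a quartic lower bound on values---does not by itself rule out nearby critical points; you need one more line, e.g.\ differentiate the expansion to get $\nabla_\varphi I_0(u(\varphi))\cdot\varphi = \tfrac{2I_0(u_0)}{\Vert u_0\Vert_{\crit}^{\crit}}A_4(\varphi)+o(\Vert\varphi\Vert^4)$ via Euler's identity, which forces $\varphi=0$. The paper avoids this by a more direct route: for any solution $u=u(\varphi)$ one has $\Vert u\Vert_h^2=\Vert u\Vert_{\crit}^{\crit}$, while Claim~\ref{claim:1} gives $\Vert u(\varphi)\Vert_h^2-\Vert u(\varphi)\Vert_{\crit}^{\crit}=\sum_{L\ge 3}A_L(\varphi)\ge c\Vert\varphi\Vert^4$, hence $\varphi=0$ immediately.
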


\proof[Proof of Proposition \ref{cond:min}] We begin with proving the first part of the proposition. We let $\varphi\in K_0$ and $\phi\in (\rr u_0\oplus K_0)^\perp$ be in $H_1^2(M)$. A Taylor expansion yields
\begin{multline}\label{comp:1}
I_0(u_0+\varphi+\phi)=I_0(u_0+\varphi+\phi(\varphi))+I_0^\prime(u_0+\varphi+\phi(\varphi))(\phi-\phi(\varphi))\\
+\frac{1}{2}I_0^{\prime\prime}(u_0)(\phi-\phi(\varphi),\phi-\phi(\varphi))+o\left(\Vert\phi-\phi(\varphi)\Vert_h^2\right)
\end{multline}
as $\varphi,\phi\to 0$. Since $\phi\in (\rr u_0\oplus K_0)^\perp$, $\phi(\varphi)\in K_0^\perp$ and $\phi(\varphi)=O(\Vert \varphi\Vert_h^2)$, we get that 
\begin{equation}\label{comp:2}
\Vert\phi-\phi(\varphi)\Vert_h^2
%&=& \Vert\Pi_{\rr u_0\oplus K_0}(\phi-\phi(\varphi))\Vert_h^2+\Vert\Pi_{(\rr u_0\oplus K_0)^\perp}(\phi-\phi(\varphi))\Vert_h^2\\
%&=&  \Vert\Pi_{\rr u_0\oplus K_0}(\phi(\varphi))\Vert_h^2+\Vert\Pi_{(\rr u_0\oplus K_0)^\perp}(\phi-\phi(\varphi))\Vert_h^2\\
=\Vert\Pi_{(\rr u_0\oplus K_0)^\perp}(\phi-\phi(\varphi))\Vert_h^2+O(\Vert\varphi\Vert_h^4)
\end{equation}
as $\varphi,\phi\to 0$. 
%We then get that
%\begin{eqnarray}
%I_0(u_0+\varphi+\phi)&=&I_0(u_0+\varphi+\phi(\varphi))+I_0^\prime(u_0+\varphi+\phi(\varphi))(\phi-\phi(\varphi))\\
%&&+\frac{1}{2}I_0^{\prime\prime}(u_0)(\phi-\phi(\varphi),\phi-\phi(\varphi))\\
%&&+o\left(\Vert\Pi_{(\rr u_0\oplus K_0)^\perp}(\phi-\phi(\varphi))\Vert_h^2\right)+o(\Vert\varphi\Vert_h^4)
%\end{eqnarray}
%as $\varphi,\phi\to 0$. 
As one can check, for any $u,v\in H_1^2(M)$, $u\not\equiv 0$, we have that
$$I_0^\prime(u)(v)=\frac{2 I_0(u)}{\Vert u\Vert_h^2}\left(u-\frac{\Vert u\Vert_h^2}{\int_M|u|^{\crit}\, dv_g}(\Delta_g+h)^{-1}(F_0'(u)),v\right)_h.$$
Therefore, since $\phi-\phi(\varphi)\in K_0^\perp$, it follows from the definition of $\phi(\varphi)$ in Proposition \ref{prop:def:phibar}, Claim \ref{claim:1} and $A_3\equiv 0$ that
\begin{equation}\label{comp:3}
I_0^\prime(u_0+\varphi+\phi(\varphi))(\phi-\phi(\varphi))=O\left(\Vert \varphi\Vert_h^4\Vert \phi-\phi(\varphi)\Vert_h\right)=o\left(\Vert \varphi\Vert_h^4\right)
\end{equation}
as $\varphi,\phi\to 0$. Since $I_0^{\prime\prime}(u_0)\geq 0$, it follows from \eqref{ker:II} that there exists $c_1>0$ such that
\begin{equation}\label{comp:4}
I_0^{\prime\prime}(u_0)(u,u)\geq 4c_1 \Vert\Pi_{(\rr u_0\oplus K_0)^\perp}(u)\Vert_h^2
\end{equation}
for all $u\in H_1^2(M)$. The positivity of $A_4$ yields the existence of $c_2>0$ such that 
\begin{equation}\label{comp:5}
A_4(\varphi)\geq c_2\Vert\varphi\Vert^4\hbox{ for all }\varphi\in K_0.
\end{equation}
Plugging \eqref{Taylor:I0}, \eqref{comp:2}, \eqref{comp:3}, \eqref{comp:4} and \eqref{comp:5} into \eqref{comp:1} yields the existence of $c_3>0$ such that 
\begin{equation}
I_0(u_0+\varphi+\phi)\geq I_0(u_0)+ c_3 \Vert\varphi\Vert^4+c_1 \Vert\Pi_{(\rr u_0\oplus K_0)^\perp}(\phi-\phi(\varphi))\Vert_h^2
\end{equation}
as $\varphi,\phi\to 0$, where $\varphi\in K_0$ and $\phi\in (\rr u_0\oplus K_0)^\perp$. This proves that $u_0$ is a strict local minimizer of $I_0$.

%\begin{eqnarray}
%I_0(u_0+\varphi+\phi)&\geq &I_0(u_0)+ \frac{I_0(u_0)}{2\Vert u_0\Vert_{\crit}^{\crit}}A_4(\varphi)\\
%&&+2c_1 \Vert\Pi_{(\rr u_0\oplus K_0)^\perp}(\phi-\phi(\varphi))\Vert_h^2\\
%&&+o\left(\Vert\Pi_{(\rr u_0\oplus K_0)^\perp}(\phi-\phi(\varphi))\Vert_h^2\right)+o(\Vert\varphi\Vert_h^4)
%\end{eqnarray}
%as $\varphi,\phi\to 0$.  We then get that there exists $c_3>0$ such that

\medskip\noindent For the second part, for any solution $u\in B_{\nu_1}(u_0)$, we decompose $u:=u_0+\varphi+\psi$ where $\varphi\in K_0$ and $\psi\in K_0^\perp$. We have that $\Vert \varphi\Vert<\nu_1$ and $\Vert \psi\Vert<\nu_1$. It follows from Proposition~\ref{prop:analycity} that if $\nu_1>0$ is small enough, then $\psi=\phi(\varphi)$ and $u=u(\varphi)$.  The positivity of $A_4$ yields the existence of $c>0$ such that $A_4(\varphi)\geq 2c\Vert\varphi\Vert^4$ for all $\varphi\in K_0$. It then follows from Claim \ref{claim:1} that $\Vert u\Vert_h^2-\Vert u\Vert_{\crit}^{\crit}\geq c\Vert\varphi\Vert^4$. Since $u$ is a solution to the equation, we then get that $\varphi=0$ and then $u=u_0$.
\endproof

\noindent In this section, we exhibit situations in which the hypothesis of Proposition \ref{cond:min} hold, which yields strict local minimizers for $I_0$.

\subsection{The expression of $A_4$ when $u_0$ is constant}

We assume here that $h,u_0>0$ are positive constants. In particular, we have that $h=u_0^{\crit-2}$ and that
$$K_0=\{\varphi\in C^{2}(M)/\, \Delta_g\varphi =\lambda\varphi\},$$
where $\lambda:=(\crit-2)u_0^{\crit-2}>0$. In other words, $u_0$ is degenerate if and only if $\lambda$ is an eigenvalue of $\Delta_g$. As one checks, the operator
\begin{equation*}
\begin{array}{cccc}
\Delta_g-\lambda : & K_0^\perp &\to & (K_0^\perp)^\prime\\
 & \phi &\mapsto & \left(\tau\mapsto \int_M((\nabla\phi,\nabla\tau)_g-\lambda \phi \tau)\, dv_g\right)
\end{array}
\end{equation*}
is a bi-continuous isomorphism and then definition \eqref{carac:P:L} yields 
$$P_2(\varphi)=\frac{(\crit-1)(\crit-2)}{2}(\Delta_g-\lambda)^{-1}(u_0^{\crit-3}\varphi^2)$$
for all $\varphi\in K_0$. As a consequence, the expression \eqref{eq:A4} of $A_4$ can be rewritten
\begin{multline}\label{exp:V:constant}
A_4(\varphi)=(\crit-1)(\crit-2)u_0^{\crit-4}\bigg(-\frac{(\crit-1)\lambda}{2}\int_M \varphi^2(\Delta_g-\lambda)^{-1}(\varphi^2)\, dv_g\\
-\frac{\crit-3}{6}\int_M\varphi^4\, dv_g\bigg)
\end{multline}
for all $\varphi\in K_0$.

\subsection{The case of the Yamabe equation on the canonical sphere} 

In the case of the Yamabe equation on the sphere, the kernel $K_0$ parametrizes exactly the noncompact set of minimizers, which makes $A_4$ vanish. More precisely,   

\begin{prop}\label{prop:V:sphere}[Kobayashi \cite{kobayashi}] 
Assume that $(M,g)=(\mathbb{S}^n,\can)$ and that $h\equiv c_n R_{\can}$. Then any solution $u_0$ to \eqref{eq:u0} is minimizing and $A_4\equiv 0$ for all $u_0$. 
\end{prop}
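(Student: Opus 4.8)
The plan is to use two classical facts about the Yamabe equation on $(\mathbb{S}^n,\can)$ — the complete classification of its positive solutions, and the nondegeneracy of the associated critical manifold (the Bianchi--Egnell condition $(BE)$ holds on the round sphere) — and then to read off $A_4\equiv 0$ from Claim~\ref{claim:1}. The classification yields that every solution is minimizing; condition $(BE)$, together with the uniqueness in the Lyapunov--Schmidt reduction, yields that $u_0+\varphi+\phi(\varphi)$ is itself a solution for all small $\varphi\in K_0$, which forces $A_4\equiv 0$.

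\textbf{The solutions are minimizing.} By the classification of positive solutions to $\Delta_{\can}u_0+c_nR_{\can}u_0=u_0^{\crit-1}$ on the round sphere (Obata; Caffarelli--Gidas--Spruck), every such $u_0$ is obtained from the constant solution via a conformal diffeomorphism of $(\mathbb{S}^n,\can)$. Since $I_0$ (with $h\equiv c_nR_g$, i.e. the Yamabe functional) is invariant under conformal changes of metric and under multiplication by positive constants, $I_0(u_0)$ equals the Yamabe invariant of $(\mathbb{S}^n,\can)$, which is the infimum of $I_0$. Hence $u_0$ is minimizing.

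\textbf{Vanishing of $A_4$.} First, condition $(BE)$ holds on $(\mathbb{S}^n,\can)$: by stereographic projection and the nondegeneracy of the Aubin--Talenti bubbles on $\rn$ (see \cite{Rey,BE}), the solution set of \eqref{eq:u0} near $u_0$ is the conformal orbit of $u_0$, a smooth manifold of dimension $\dim_{\rr}(K_0)$ (both equal $n+1$), whose tangent space at $u_0$ is $K_0$ — indeed, differentiating \eqref{eq:u0} along a curve of solutions through $u_0$ shows the tangent space is contained in $K_0$, and the dimensions agree. Parametrising this orbit by $\tilde u\in C^1(B_\nu(0)\subset K_0,H_1^2(M))$ so that $\Pi_{K_0}(\tilde u(\varphi)-u_0)=\varphi$, we write $\tilde u(\varphi)=u_0+\varphi+\psi(\varphi)$ with $\psi(\varphi)\in K_0^\perp$. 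As each $\tilde u(\varphi)$ solves \eqref{eq:u0}, it satisfies in particular $\Pi_{K_0^\perp}(\tilde u(\varphi)-(\Delta_g+h)^{-1}(F_0'(\tilde u(\varphi))))=0$, so by the uniqueness in Proposition~\ref{prop:def:phibar} we get $\psi(\varphi)=\phi(\varphi)$ for $\varphi$ small; that is, $u(\varphi)=u_0+\varphi+\phi(\varphi)$ solves \eqref{eq:u0} for all small $\varphi\in K_0$. Testing \eqref{eq:u0} for $u(\varphi)$ against $u(\varphi)$ itself gives $\Vert u(\varphi)\Vert_h^2=\Vert u(\varphi)\Vert_{\crit}^{\crit}$, hence by the first identity of Claim~\ref{claim:1}, $\sum_{L\geq 3}A_L(\varphi)=0$ on a neighbourhood of $0$ in $K_0$. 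Since each $A_L$ is a homogeneous polynomial of degree $L$, this forces $A_L\equiv 0$ for every $L\geq 3$; in particular $A_4\equiv 0$.

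\textbf{Main difficulty.} The analytic content is already contained in Claim~\ref{claim:1}; the only point requiring care is the identification, for small $\varphi$, of $u_0+\varphi+\phi(\varphi)$ with a genuine solution of \eqref{eq:u0}, i.e. the fact that the reduction manifold $\mathcal M$ coincides locally with the critical manifold. This is exactly where the classification of solutions on the sphere and the uniqueness built into Proposition~\ref{prop:def:phibar} enter; everything else is routine. (Kobayashi's original argument \cite{kobayashi} instead computes $A_3$ and $A_4$ directly using spherical harmonics; the above is the conceptual reason for the vanishing — the kernel $K_0$ parametrises the noncompact family of minimizing solutions, along which $\Vert u\Vert_h^2-\Vert u\Vert_{\crit}^{\crit}\equiv 0$.)
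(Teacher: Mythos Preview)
Your proof is correct and follows essentially the same strategy as the paper's: both exploit the Bianchi--Egnell condition on the round sphere to identify the reduction manifold $\mathcal{M}$ locally with the conformal orbit of $u_0$, then read off the vanishing from the fact that all nearby $u(\varphi)$ are genuine solutions. The only difference is in the final step: the paper works with a one-parameter curve $t\mapsto u(t)$ for each fixed $\varphi\in K_0$, first deduces $A_3\equiv 0$ from \eqref{nec:min}, and then extracts $A_4(\varphi)=0$ from the constancy of $I_0$ along solutions via the expansion \eqref{Taylor:I0}; you instead parametrise the whole orbit over $K_0$ at once and use the energy identity $\Vert u(\varphi)\Vert_h^2=\Vert u(\varphi)\Vert_{\crit}^{\crit}$ together with the first formula of Claim~\ref{claim:1}, which is slightly more direct and yields $A_L\equiv 0$ for every $L\geq 3$ in one stroke.
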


\proof[Proof of Proposition \ref{prop:V:sphere}]
This result is a consequence of Theorem 2.1 in Kobayashi \cite{kobayashi}. We give here an independent proof for the sake of self-content. The vanishing of $A_4$ is a consequence of the direct computation in the proof of (ii) of Proposition~\ref{prop:product} below. We give here a shorter and less technical proof that stresses on properties of solutions to the scalar curvature equation on the sphere
\begin{equation}\label{yam:sphere}
\Delta_{\can}+c_n R_{\can}u=u^{\crit-1}\hbox{ in }\sn.
\end{equation}
The proof relies on two facts: first, the elements of the kernel $K_0$ satisfy a Bianchi--Egnell condition; second, all solutions to \eqref{yam:sphere} minimize $I_0$ (see Obata \cite{obata}).

\medskip\noindent We fix $\varphi\in K_0$. It follows from properties of the canonical sphere (see below) that there exists $t\in\rr \mapsto u(t)$ a smooth function such that $u(t)\in C^{\infty}(\sn)$ is a solution to \eqref{yam:sphere} for all $t$, $u(0)=u_0$ and $u'(0)=\varphi$. This is Bianchi--Egnell condition. Since $u(t)$ is a positive solution to \eqref{yam:sphere}, it follows from Proposition \ref{prop:def:phibar} that for $t$ small, there exists $\varphi(t)\in K_0$ such that $u(t)=u_0+\varphi(t)+\phi(\varphi(t))$. Moreover, $t\mapsto \varphi(t)$ is smooth, $\varphi(0)=0$ and $\varphi'(0)=\varphi$. It follows from \eqref{nec:min} that $A_3\equiv 0$ since $u_0$ minimizes $I_0$. It then follows from the expansion \eqref{Taylor:I0} of $A_4$ that
$$\frac{A_4(\varphi)}{2\Vert u_0\Vert_{\crit}^2}=\lim_{t\to 0}\frac{I_0(u_0+\varphi(t)+\phi(\varphi(t)))-I_0(u_0)}{t^4I_0(u_0)}=\lim_{t\to 0}\frac{I_0(u(t))-I_0(u_0)}{t^4I_0(u_0)}\,.$$
Moreover, it follows from Obata \cite{obata} that positive solutions to \eqref{yam:sphere} are all minimizing, and then $I_0(u(t))=I_0(u_0)$ for all small $t$. Therefore, we get that $A_4(\varphi)=0$ for all $\varphi\in K_0$.

\medskip\noindent We are now left with proving the existence of $t\mapsto u(t)$. Up to conformal transformation (see Obata \cite{obata}), we assume that $u_0$ is the sole positive constant solution to \eqref{yam:sphere}. In this case, $K_0=\{\varphi\in C^2(\sn)\, /\, \Delta_{\can}\varphi=n\varphi\}$
is the space of first spherical harmonics. We fix $\varphi\in K_0$ and we let $Z:=\vec{grad}(\varphi)$ be the associated vector field. This is a conformal vector field and, denoting $f_t$ the associated flow, we have that $f_t^\star\can=\omega(t)^{4/(n-2)}\can$ for some positive function $t\mapsto \omega(t)\in C^\infty(\sn)$ such that $\omega(0)=1$. It follows from the conformal invariance of the scalar curvature equation that $u(t):=\omega(t) u_0$ is also a solution to \eqref{yam:sphere} for all $t$. Moreover, since $f_t^\star\can=\omega(t)^{4/(n-2)}\can$, we have that $\omega'(0)=-\frac{n-2}{2n}\Delta_{\can}\varphi=\frac{n-2}{2n}\hbox{div}_{\can}(Z)=-\frac{n-2}{2}\varphi$, and then $u'(0)=c\varphi$ for some $c\neq 0$. This proves the result after rescaling.
\endproof

\subsection{Product of manifolds and examples of degenerate strict local minimizers}\label{subsec:product}

Let $(M_1,g_1)$ and $(M_2, g_2)$ be two compact manifolds of respective dimensions $d\geq 1$ and $n-d\geq 1$ with $n\geq 3$. We consider the Riemannian manifold $M:=M_1\times M_2$ endowed with the product metric $g:=g_1\oplus g_2$. For $i=1,2$, we let $\lambda_1(M_i, g_i)>0$ be the first nonzero eigenvalue of $\Delta_{g_i}$ on $M_i$. We define
\begin{equation}\label{def:h:product}
h:=\frac{\lambda_1(M_1, g_1)}{\crit-2}\hbox{ and }u_0:=\left(\frac{\lambda_1(M_1, g_1)}{\crit-2}\right)^{\frac{n-2}{4}},
\end{equation}
so that $u_0$ is the only positive constant solution to $\Delta_g u_0+hu_0=u_0^{\crit-1}\hbox{ in }M$. When $d\geq 3$, we define
$$\tilde{h}:=\frac{\lambda_1(M_1, g_1)}{\crit_d-2}\hbox{ and }\tilde{u}_0:=\left(\frac{\lambda_1(M_1, g_1)}{\crit_d-2}\right)^{\frac{d-2}{4}},\hbox{ where }\crit_d:=\frac{2d}{d-2}$$
so that $\tilde{u}_0$ is the only positive constant solution to
$$\Delta_{g_1} \tilde{u}_0+\tilde{h}\tilde{u}_0=\tilde{u}_0^{\crit_d-1}\hbox{ in }M_1.$$
In particular, $\tilde{u}_0$ is a critical point for the functional
$$\tilde{I}_0(u):=\frac{\int_{M_1}\left(|\nabla u|_{g_1}^2+\tilde{h}u^2\right)\, dv_{g_1}}{\left(\int_{M_1}|u|^{\crit_d}\, dv_{g_1}\right)^{\frac{2}{\crit_d}}}$$
for $u\in H_1^2(M_1)\setminus \{0\}$. We prove the following:

\begin{prop}\label{prop:product} 
Let $(M_1,g_1)$ and $(M_2, g_2)$ be two compact manifolds of respective dimensions $d\geq 1$ and $n-d\geq 1$ with $n\geq 3$. We consider the Riemannian manifold $M:=M_1\times M_2$ of dimension $n\geq 3$ endowed with the product metric $g:=g_1\oplus g_2$. We let $h, u_0>0$ be as in \eqref{def:h:product}. We assume that one of the following cases hold:
\begin{enumerate}
\renewcommand{\labelenumi}{(\roman{enumi})}
\item $d\geq 3$, $\lambda_1(M_1, g_1)<\lambda_1(M_2, g_2)$, and  $\tilde{u}_0$ is a local minimizer of $\tilde{I}_0$,
\item $d\geq 1$ and $(M_1,g_1)=(\sd(r), \can)$ with $r>\sqrt{\frac{d}{\lambda_1(M_2,g_2)}}\,.$
\end{enumerate}
Then $u_0$ is a degenerate solution to \eqref{eq:u0} and $I_0^{\prime\prime}(u_0)\geq 0$. Moreover, we have that $A_3(\varphi)=0$ and $A_4(\varphi)>0$ for all $\varphi\in K_0\setminus \{0\}$. In particular, $u_0$ is a strict local minimizer of $I_0$.
\end{prop}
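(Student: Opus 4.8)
The plan is to check that $u_0$ satisfies the hypotheses of Proposition \ref{cond:min}, i.e. $A_3\equiv 0$, $I_0''(u_0)\geq 0$, and $A_4(\varphi)>0$ for all $\varphi\in K_0\setminus\{0\}$; all the geometric input enters through the identification of $K_0$ and, in case (ii), through one explicit spherical harmonic computation.

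\emph{Step 1: the kernel.} Since $h,u_0$ are the positive constants of \eqref{def:h:product}, we have $h=u_0^{\crit-2}$, so $K_0=\{\varphi\in C^2(M)/\,\Delta_g\varphi=\lambda\varphi\}$ with $\lambda:=(\crit-2)u_0^{\crit-2}=\lambda_1(M_1,g_1)$. On the product $M=M_1\times M_2$ one has $\Delta_g=\Delta_{g_1}\oplus\Delta_{g_2}$, so the eigenvalues of $\Delta_g$ are the sums $\mu+\nu$, $\mu\in\mathrm{spec}(\Delta_{g_1})$, $\nu\in\mathrm{spec}(\Delta_{g_2})$. In case (i) we have $\lambda_1(M_1,g_1)<\lambda_1(M_2,g_2)$; in case (ii) the hypothesis $r>\sqrt{d/\lambda_1(M_2,g_2)}$ gives $\lambda_1(\sd(r),\can)=d/r^2<\lambda_1(M_2,g_2)$. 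In both cases the only decomposition $\lambda=\mu+\nu$ with $\mu\geq 0$, $\nu\geq 0$ is $\mu=\lambda_1(M_1,g_1)$, $\nu=0$, so $\lambda$ is an eigenvalue of $\Delta_g$ (hence $u_0$ is degenerate) and $K_0=\{\varphi\circ\pi_1/\,\varphi\in C^2(M_1),\ \Delta_{g_1}\varphi=\lambda_1(M_1,g_1)\varphi\}$, where $\pi_1:M\to M_1$ is the projection; in words, $K_0$ is exactly the space of first eigenfunctions of $\Delta_{g_1}$, seen as functions on $M$ constant along $M_2$.

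\emph{Step 2: $I_0''(u_0)\geq 0$.} Since $u_0$ solves \eqref{eq:u0} and is constant, the second variation of $I_0$ at $u_0$ is nonnegative if and only if $\int_M|\nabla f|_g^2\,dv_g\geq(\crit-2)h\int_M f^2\,dv_g=\lambda_1(M_1,g_1)\int_M f^2\,dv_g$ for all $f\in H_1^2(M)$ with $\int_M f\,dv_g=0$; this holds because $\lambda_1(M,g)=\min(\lambda_1(M_1,g_1),\lambda_1(M_2,g_2))=\lambda_1(M_1,g_1)$ by Step 1.

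\emph{Step 3: $A_3\equiv 0$.} By \eqref{eq:A3} and Step 1, for $\varphi\in K_0$ one has $A_3(\varphi)=-\tfrac{(\crit-1)(\crit-2)}{2}u_0^{\crit-3}\,\mathrm{Vol}(M_2,g_2)\int_{M_1}\varphi^3\,dv_{g_1}$, so it suffices to show $\int_{M_1}\varphi^3\,dv_{g_1}=0$ for every first eigenfunction $\varphi$ of $\Delta_{g_1}$. In case (i), this is the statement "$A_3\equiv 0$" for the problem on $M_1$: by \eqref{def:h:product}, $\tilde u_0$ solves the $\crit_d$-critical equation on $M_1$ and the associated kernel is precisely the first eigenspace (because $(\crit_d-2)\tilde h=\lambda_1(M_1,g_1)$); since $\tilde u_0$ is a local minimizer of $\tilde I_0$, the necessary condition \eqref{nec:min} applied on $M_1$ yields $\int_{M_1}\varphi^3\,dv_{g_1}=0$. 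In case (ii), the first eigenfunctions of $(\sd(r),\can)$ are restrictions of linear forms on $\rr^{d+1}$, hence odd under the antipodal map, so their cubes integrate to zero.

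\emph{Step 4: $A_4(\varphi)>0$ on $K_0\setminus\{0\}$.} From $\int_{M_1}\varphi^3\,dv_{g_1}=0$ for all first eigenfunctions, polarization gives that $\varphi^2$ is $L^2(M_1)$-orthogonal to the first eigenspace; since moreover the average of $\varphi^2$ is a multiple of $u_0\in K_0^\perp$ (by \eqref{ortho:u0:K0}), we get $\varphi^2\in K_0^\perp$, and, $\varphi^2$ depending only on the $M_1$-variable, $(\Delta_g-\lambda)^{-1}(\varphi^2)=(\Delta_{g_1}-\lambda)^{-1}(\varphi^2)$, the inverse of $\Delta_{g_1}-\lambda$ on the $L^2(M_1)$-orthogonal complement of the first eigenspace. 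Plugging this into \eqref{exp:V:constant} and writing $\mathcal I:=\int_{M_1}\varphi^2(\Delta_{g_1}-\lambda)^{-1}(\varphi^2)\,dv_{g_1}$, $\mathcal J:=\int_{M_1}\varphi^4\,dv_{g_1}>0$, we obtain
$$A_4(\varphi)=(\crit-1)(\crit-2)u_0^{\crit-4}\,\mathrm{Vol}(M_2,g_2)\,P(\crit-1),\qquad P(s):=-\Big(\tfrac{\lambda}{2}\mathcal I+\tfrac16\mathcal J\Big)s+\tfrac13\mathcal J .$$
The crucial observation is that $P$ is \emph{affine} in $s$ with $P(0)=\tfrac13\mathcal J>0$, and that the \emph{same} $P$ governs $A_4$ on the factor $M_1$: applying \eqref{exp:V:constant} on $M_1$ (legitimate for $d\geq 3$) and using $(\crit_d-2)\tilde u_0^{\crit_d-2}=\lambda$ gives $\tilde A_4(\varphi)=(\crit_d-1)(\crit_d-2)\tilde u_0^{\crit_d-4}P(\crit_d-1)$. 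In case (i), $\tilde u_0$ is a local minimizer of $\tilde I_0$, so \eqref{nec:min} on $M_1$ gives $P(\crit_d-1)\geq 0$; since $n>d$ forces $0\leq\crit-1<\crit_d-1$, an affine function which is positive at $0$ and nonnegative at $\crit_d-1$ is positive on $[0,\crit_d-1)$, so $P(\crit-1)>0$ and $A_4(\varphi)>0$. In case (ii), one computes $\mathcal I$ and $\mathcal J$ explicitly on $(\sd(r),\can)$: decompose $\varphi^2$ into its average plus a degree-two spherical harmonic, on which $(\Delta_{g_1}-\lambda)^{-1}$ acts as the scalar $\tfrac{r^2}{d+2}$ (the degree-two eigenvalue being $\tfrac{2(d+1)}{r^2}$); carrying out the resulting elementary integrals shows $P(\crit-1)>0$ whenever $n>d$, and the same formula, specialized to $n=d$, gives $P(\crit_d-1)=0$ — this last identity being the computation invoked in the proof of Proposition \ref{prop:V:sphere}.

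With Steps 1--4 in hand, Proposition \ref{cond:min} applies and $u_0$ is a strict local minimizer of $I_0$. The heart of the matter is Step 4: the fact that one and the same affine function of the Sobolev exponent controls $A_4$ on $M$ and $\tilde A_4$ on $M_1$, combined with $P(0)>0$, is exactly what converts local minimality on the factor into \emph{strict} local minimality on the higher-dimensional product; in case (ii), where no such lower-dimensional critical function is available (in particular for $d\in\{1,2\}$), this step is instead carried out by the explicit spherical harmonic computation, which simultaneously supplies the input needed for Proposition \ref{prop:V:sphere}.
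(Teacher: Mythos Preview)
Your proof is correct and follows the same route as the paper's: identify $K_0$ with the first eigenspace of $\Delta_{g_1}$ (using $\lambda_1(M_1,g_1)<\lambda_1(M_2,g_2)$ in both cases), verify $I_0''(u_0)\ge 0$ and $A_3\equiv 0$, reduce $A_4$ to an integral on $M_1$, and then in Case~(i) compare with $\tilde A_4$ while in Case~(ii) compute on the sphere via the decomposition of $\varphi^2$ into a constant plus a degree-two harmonic. Your packaging of Case~(i) through the affine function $P(s)$ (with $P(0)>0$, $P(\crit_d-1)\ge 0$, and $0<\crit-1<\crit_d-1$) is a tidy reformulation of the paper's explicit substitution of $\mathcal I$ from the $\tilde A_4$-formula into the $A_4$-formula, but the underlying computation and the conclusion are identical.
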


\noindent In the case of the Yamabe equation on the product of spheres, this proposition is a consequence of Kobayashi \cite{kobayashi}.\par

\proof[Proof of Proposition \ref{prop:product}.]
We let $(M_1,g_1)$, $(M_2,g_2)$ be as in the proposition. Since $\lambda_1(\sd(r), \can)=dr^{-2}$ (see Berger--Gauduchon--Mazet \cite{bgm}), we have that
\begin{equation}\label{rel:lambda:1}
\lambda_1(M_1, g_1)<\lambda_1(M_2, g_2)
\end{equation}
in both Cases (i) and (ii). As one checks, 
$$K_0=\{\varphi\in C^2(M)/\, \Delta_g\varphi=\lambda_1(M_1, g_1)\varphi\}.$$
It follows from spectral theory for products that $K_0$ is spanned by the functions $(x,y)\mapsto u_1(x)u_2(y)$ where for $i=1,2$, $u_i: M_i\to \rr$ is an eigenfunction for the eigenvalue $\mu_i$ for $\Delta_{g_i}$, where $\mu_1+\mu_2=\lambda_1(M_1,g_1)$. It then follows from \eqref{rel:lambda:1} that
$$K_0=\{(x,y)\in M\mapsto \varphi(x)/\, \varphi\in \Lambda_1(M_1,g_1)\},$$
where 
$$\Lambda_1(M_1,g_1):=\{\varphi\in C^2(M_1)/\, \Delta_{g_1}\varphi=\lambda_1(M_1, g_1)\varphi\}$$
is the eigenspace associated to the first eigenvalue $\lambda_1(M_1, g_1)$. We claim that 
\begin{equation}\label{cubic:M1}
\int_{M_1}\varphi^3\, dv_{g_1}=0\hbox{ for all }\varphi\in \Lambda_1(M_1,g_1).
\end{equation}
We prove the claim. In Case (i), since $d\geq 3$ and $\tilde{u}_0$ is a local minimizer, \eqref{cubic:M1} follows from \eqref{eq:A3} and \eqref{nec:min}. In Case (ii), since $(M_1,g_1)=(\sd(r),\can)$, $\Lambda_1(M_1,g_1)$ is the restriction to $\sd(r)$ of linear functions on $\rr^{d+1}$, and then \eqref{cubic:M1} follows from symmetry. This proves the claim.
 
\medskip\noindent We claim that $I_0^{\prime\prime}(u_0)\geq 0$.  We prove the claim. Indeed, since $I_0^\prime(u_0)=0$, straightforward computations yield
$$I_0^{\prime\prime}(u_0)(v,v)=\frac{2I_0(u_0)}{\Vert u_0\Vert_h^2}\left(\Vert v\Vert_h^2-(\crit-1)\int_M u_0^{\crit-2}v^2\, dv_g\right)$$
for all $v\in (\rr u_0)^\perp\subset H_1^2(M)$. With the choice of $u_0$ and $h$ in \eqref{def:h:product}, we then get that
$$I_0^{\prime\prime}(u_0)(v,v)=\frac{2I_0(u_0)}{\Vert u_0\Vert_h^2}\left(\int_M |\nabla v|_g^2\, dv_g-\lambda_1(M_1,g_1)\int_M v^2\, dv_g\right)$$
for all $v\in (\rr u_0)^\perp\subset H_1^2(M)$. Therefore, it follows from the above characterization of $K_0$ above that $I_0^{\prime\prime}(u_0)(v,v)\geq 0$ for all $v\in (\rr u_0)^\perp$. Since $u_0$ is in the kernel of $I_0^{\prime\prime}(u_0)$, we then get that $I_0^{\prime\prime}(u_0)\geq 0$. This proves the claim.

\medskip\noindent Since the elements of $K_0$ are independent of the second variable, we get that
\begin{equation*}
(\Delta_g-\lambda_1(M_1, g_1))^{-1}((x,y)\mapsto \varphi^2(x))=(x,y)\mapsto (\Delta_{g_1}-\lambda_1(M_1, g_1))^{-1}(\varphi^2(x))
\end{equation*}
for all $\varphi\in \Lambda_1(M_1,g_1)$ where $(\Delta_{g_1}-\lambda_1(M_1, g_1))^{-1}$ is the inverse of the isomorphism
\begin{equation*}
\begin{array}{ccc}
 \Lambda_1(M_1, g_1)^\perp &\to & (\Lambda_1(M_1, g_1)^\perp)^\prime\\
 \phi &\mapsto & \left(\tau\mapsto \int_{M_1}((\nabla\phi,\nabla\tau)_{g_1}- \lambda_1(M_1, g_1)\phi \tau)\, dv_{g_1}\right)
\end{array}
\end{equation*}
where the orthogonality in $H_1^2(M_1)$ is considered with respect to the standard $L^2-$product. As a consequence, the expression \eqref{exp:V:constant} can be rewritten
\begin{multline}\label{exp:V:product:2}
A_4(\varphi)=\frac{c_1\hbox{Vol}_{g_2}(M_2)}{2}\bigg(-(\crit-1)\lambda_1(M_1, g_1)\int_{M_1} \varphi^2(\Delta_{g_1}-\lambda_1(M_1, g_1))^{-1}(\varphi^2)\, dv_{g_1}\\
-\frac{\crit-3}{3}\int_{M_1}\varphi^4\, dv_{g_1}\bigg)
\end{multline}
for all $\varphi\in K_0$, where $c_1:=(\crit-1)(\crit-2)u_0^{\crit-4}$ and, for simplicity, we have written $K_0$ for $\Lambda_1(M_1, g_1)$. We now distinguish Cases (i) and (ii) of Proposition \ref{prop:product}:

\medskip\noindent{\bf Case (i): $d\geq 3$ and $\tilde{u}_0$ is a local minimizer.} As one checks, 
$$\tilde{K}_0:=\left\{\varphi\in C^{2}(M_1)/\, \Delta_{g_1} \varphi+\tilde{h}\varphi=(\crit_d-1)\tilde{u}_0^{\crit_d-2}\varphi\right\}=\Lambda_1(M_1,g_1).$$
We define $\tilde{A}_4$ for $\tilde{u}_0$ and therefore \eqref{exp:V:constant} yields
\begin{multline*}
\frac{2}{(\crit_d-1)(\crit_d-2)\tilde{u}_0^{\crit_d-4}}\tilde{A}_4(\varphi)\\
=-(\crit_d-1)\lambda_1(M_1, g_1)\int_{M_1} \varphi^2(\Delta_{g_1}-\lambda_1(M_1, g_1))^{-1}(\varphi^2)\, dv_{g_1}\\-\frac{\crit_d-3}{3}\int_{M_1}\varphi^4\, dv_{g_1}
\end{multline*}
for all $\varphi\in \Lambda_1(M_1,g_1)$. Plugging this expression into \eqref{exp:V:product:2} yields
\begin{equation*}
A_4(\varphi)=c_2\cdot\left(\frac{(\crit-1)\tilde{A}_4(\varphi)}{4(\crit_d-1)^2(\crit_d-2)\tilde{u}_0^{\crit_d-4}} +\frac{(n-d)}{3(n-2)(d+2)}\int_{M_1}\varphi^4\, dv_{g_1}\right)
\end{equation*}
for all $\varphi\in \Lambda_1(M_1,g_1)$, where $c_2:=4(\crit-1)(\crit-2)u_0^{\crit-4}\hbox{Vol}_{g_2}(M_2)$. In particular, if $\tilde{u}_0$ is a local minimizer for $\tilde{I}_0$, then \eqref{nec:min} yields $\tilde{A}_4\geq 0$. Therefore, $A_4(\varphi)>0$ for all $\varphi\in \Lambda_1(M_1,g_1)\setminus\{0\}$ since $n-d>0$. This proves Proposition \ref{prop:product} in Case (i).

\medskip\noindent{\bf Case (ii): $(M_1, g_1)=(\mathbb{S}^d(r),\can)$.} The case $d\geq 3$ is covered by Case (i), and only the cases $d=1,2$ remain to be covered. For simplicity, we assume that $r=1$. It follows from Berger--Gauduchon--Mazet \cite{bgm} that  the second positive eigenvalue $\lambda_2(\mathbb{S}^d,\can)$ is $2(d+1)$ and the eigenfunctions are the restrictions to $\sd$ of second-order homogeneous harmonic polynomials on $\rr^{d+1}$.

\medskip\noindent We let $\Eucl$ be the Euclidean metric on $\rr^{d+1}$. We claim that
\begin{equation}\label{eq:inv:sphere}
(\Delta_{\can}-\lambda_1)^{-1}(\varphi^2)=\frac{\varphi^2+\frac{\lambda_2\Delta_{\Eucl}(\varphi^2)}{2(d+1)\lambda_1}}{\lambda_2-\lambda_1}\hbox{ for all }\varphi\in \Lambda_1(\mathbb{S}^d,\can).
\end{equation}
where $\lambda_1=d$ and $\lambda_2=2(d+1)$. We prove the claim. We fix $\varphi\in \Lambda_1(\mathbb{S}^d,\can)$. In particular $\varphi^2$ is a second-order homogeneous polynomial on $\rr^{d+1}$, and $\varphi^2+\frac{\Delta_{\Eucl}(\varphi^2)}{2(d+1)}|x|^2$ is a harmonic second-order homogenous polynomial, and therefore its restriction to $\sd$ is an eigenfunction for $\lambda_2$. Since $\Delta_{\Eucl}(\varphi^2)$ is constant and $|x|^2$ is constant on $\sd$, \eqref{eq:inv:sphere} follows from a direct computation. This proves the claim.

\medskip\noindent We claim that
\begin{equation}\label{id:phi:4}
\int_{\sd}\varphi^4\, dv_{\can}=-\frac{3}{2(d+3)}\Delta_{\Eucl}(\varphi^2)\int_{\sd}\varphi^2\, dv_{\can}\hbox{ for all }\varphi\in \Lambda_1(\mathbb{S}^d,\can).
\end{equation}
We prove the claim. Since, up to homothetic transformation, $\varphi$ is a coordinate function, proving \eqref{id:phi:4} is equivalent to proving $\int_{\sd}x^4\, dv_{\can}=(3/(d+3))\int_{\sd}x^2\, dv_{\can}$ where $x$ is the first coordinate in $\rr^{d+1}$. This latest identity follows from the change of variable  $(t,\sigma)\mapsto (t, \sqrt{1-t^2}\sigma)$ from $(-1,1)\times\mathbb{S}^{d-1}$ to $\sd\setminus\{(\pm 1,...,0)\}$. This proves the claim.

\medskip\noindent Plugging \eqref{eq:inv:sphere} and \eqref{id:phi:4} into \eqref{exp:V:product:2} yields
\begin{equation*}
A_4(\varphi)=\frac{4(\crit-1)(\crit-2)u_0^{\crit-4}\hbox{Vol}_{g_2}(M_2)(n-d)}{3(n-2)(d+2)}\int_{\sd}\varphi^4\, dv_{\can}
\end{equation*}
for all $\varphi\in \Lambda_1(\mathbb{S}^d,\can)$. In particular, since $d<n$, we have that $A_4(\varphi)>0$ for all $\varphi\in \Lambda_1(\mathbb{S}^d,\can)\setminus\{0\}$. This proves Case (ii) of Proposition \ref{prop:product} when $r=1$. The general case follows by rescaling. This proves Proposition \ref{prop:product}.
\endproof

\noindent As a remark, the computations made for Case (ii) are valid when $d=n\geq 3$ (that is $M=\sd=\sn$), and we get that $A_4\equiv 0$, which has been obtained by another method in Proposition \ref{prop:V:sphere}.

\medskip\noindent When $h\equiv c_n R_g$, an immediate consequence of Proposition \ref{prop:product} is the following:

\begin{coro}\label{th:type:4:yam} 
Let $(N,g_N)$ be a compact Riemannian manifold of positive constant scalar curvature. We choose $d\geq 1$ and we assume that 
\begin{equation}\label{cond:prop:type:4}
R_{g_N}<\hbox{dim}(N)\lambda_1(N,g_N)\hbox{ and }n:=d+\hbox{dim}(N)\geq 3\,.
\end{equation}
We endow the manifold $M:=\sd\big(\sqrt{\hbox{dim}(N)\cdot d/R_{g_N}}\big)\times N$ with the product metric $g:=\can\oplus g_N$.  Then the positive constant solution to the scalar curvature equation $\Delta_g u+c_n R_g u=u^{\crit-1}$ on $M$ is a degenerate strict local minimizer. 
\end{coro}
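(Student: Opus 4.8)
The plan is to deduce Corollary \ref{th:type:4:yam} directly from Proposition \ref{prop:product}, Case (ii), by checking that the hypotheses are met for the stated choice of radius and that, under the condition $h\equiv c_nR_g$, the constant solution $u_0$ of \eqref{def:h:product} coincides with the constant solution of the scalar curvature equation. First I would set $(M_1,g_1)=(\sd(r),\can)$ and $(M_2,g_2)=(N,g_N)$, with $r=\sqrt{\dim(N)\cdot d/R_{g_N}}$, so that $M=M_1\times M_2$ of dimension $n=d+\dim(N)\geq 3$. Since the scalar curvature is additive on Riemannian products, $R_g=R_{\can}+R_{g_N}=d(d-1)r^{-2}+R_{g_N}$, which is a positive constant; hence $h\equiv c_nR_g$ is a positive constant, the equation \eqref{eq:u0} has a unique positive constant solution, and that constant must therefore equal the $u_0$ defined in \eqref{def:h:product} (both are the unique positive constant with $h=u_0^{\crit-2}$).

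Next I would verify the radius condition of Case (ii) of Proposition \ref{prop:product}, namely $r>\sqrt{d/\lambda_1(M_2,g_2)}$. Using $\lambda_1(\sd(r),\can)=dr^{-2}$ (Berger--Gauduchon--Mazet \cite{bgm}), and $h=\lambda_1(M_1,g_1)/(\crit-2)$ by \eqref{def:h:product}, the identity $h=c_nR_g$ together with $R_g=d(d-1)r^{-2}+R_{g_N}$ pins down $r$; alternatively, and more transparently, I would simply plug $r^2=\dim(N)\cdot d/R_{g_N}$ into the inequality $r^2>d/\lambda_1(N,g_N)$, which reduces after cancelling $d$ to $\dim(N)/R_{g_N}>1/\lambda_1(N,g_N)$, i.e. $R_{g_N}<\dim(N)\lambda_1(N,g_N)$ — precisely the first assumption \eqref{cond:prop:type:4}. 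One should double-check the consistency between the two normalizations: the $h$ of \eqref{def:h:product} is $\lambda_1(M_1,g_1)/(\crit-2)=dr^{-2}/(\crit-2)$, and one needs this to equal $c_nR_g=c_n(d(d-1)r^{-2}+R_{g_N})$; solving this for $r$ gives exactly the prescribed radius, so the choice of radius in the corollary is precisely the one that forces $h\equiv c_nR_g$ while keeping $u_0$ the constant solution.

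Having checked all hypotheses of Proposition \ref{prop:product} (ii), I would invoke its conclusion: $u_0$ is a degenerate solution to \eqref{eq:u0}, $I_0''(u_0)\geq 0$, $A_3\equiv 0$, and $A_4(\varphi)>0$ for all $\varphi\in K_0\setminus\{0\}$; hence, by Proposition \ref{cond:min} (or directly by the last sentence of Proposition \ref{prop:product}), $u_0$ is a strict local minimizer of $I_0$. Since $h\equiv c_nR_g$, equation \eqref{eq:u0} is the scalar curvature equation, so this is exactly the assertion of the corollary.

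The main obstacle I anticipate is purely bookkeeping with the two different normalizations of the constant $h$: the Proposition defines $h$ from $\lambda_1(M_1,g_1)$, whereas the Corollary prescribes $h\equiv c_nR_g$, and one must confirm these agree under the stated choice of radius (and that consequently $u_0$ as defined in \eqref{def:h:product} is indeed the constant solution of the scalar curvature equation). There is no analytic difficulty — the content is entirely in the translation of the inequality \eqref{cond:prop:type:4} into the radius condition $r>\sqrt{d/\lambda_1(M_2,g_2)}$ and in matching constants; once that is done the corollary follows immediately from Proposition \ref{prop:product}.
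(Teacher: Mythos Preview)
Your proposal is correct and follows essentially the same route as the paper's proof: set $r_0=\sqrt{\dim(N)\cdot d/R_{g_N}}$, verify that \eqref{cond:prop:type:4} is equivalent to the radius condition $r_0>\sqrt{d/\lambda_1(N,g_N)}$ of Case~(ii) of Proposition~\ref{prop:product}, and then check via the computation $c_nR_g=\frac{n-2}{4(n-1)}\big(R_{g_N}+d(d-1)r_0^{-2}\big)=\frac{(n-2)d}{4r_0^2}=\lambda_1(\sd(r_0),\can)/(\crit-2)$ that the potential $h$ of \eqref{def:h:product} coincides with $c_nR_g$, so that Proposition~\ref{prop:product} applies directly to the scalar curvature equation. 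Your write-up is slightly more verbose in spelling out the bookkeeping, but there is no substantive difference in approach.
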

\proof[Proof of Corollary \ref{th:type:4:yam}.] We fix $r_0:=\sqrt{\hbox{dim}(N)\cdot d/R_{g_N}}$. It follows from inequality \eqref{cond:prop:type:4} that we are in Case (ii) of Proposition \ref{prop:product}. With this choice of $r_0$, we have that
$$c_n R_g=\frac{n-2}{4(n-1)}\left(R_{g_N}+\frac{d(d-1)}{r_0^2}\right)=\frac{(n-2)d}{4r_0^2}=\frac{\lambda_1(\sd(r_0),\can)}{\crit-2}.$$
Therefore Proposition \ref{prop:product} applies. This proves Corollary \ref{th:type:4:yam}.\endproof

\noindent Inequality \eqref{cond:prop:type:4} holds if $g_N$ is a Yamabe metric, that is a minimizer of the Yamabe functional. From the pde point of view, a metric $g$ on $M$ is a Yamabe metric iff $R_g$ is constant and the minimum of $I_0$ (with $h\equiv c_n R_g$) is achieved by constants. 

\medskip\noindent As a remark, Corollary \ref{th:type:4:yam} can be generalized by replacing the sphere by a manifold $V$ of dimension $d\geq 3$ with a degenerate Yamabe metric $g_V$ of positive scalar curvature satisfying $R_{g_N}=\hbox{dim}(N)\lambda_1(V,g_V)$ and $\lambda_1(V,g_V)<\lambda_1(N,g_N)$. Note that the degeneracy of $g_V$ implies that $R_{g_V}=(\hbox{dim}(V)-1)\lambda_1(V,g_V)$.

\addtolength{\textheight}{20pt}

\end{document}